\newtheorem{theorem}{Theorem}[section] 
\newtheorem{lemma}[theorem]{Lemma}     
\newtheorem{proposition}[theorem]{Proposition}
\theoremstyle{definition}
\newtheorem{example}[theorem]{Example}
\newtheorem{remark}[theorem]{Remark}
\newcommand\oink{\mathcal O}
\newcommand\Da{\mathfrak{D}}
\newcommand\bmattrix[4]{\left(\begin{array}{cc}#1&#2\\#3&#4\end{array}\right)}
\newcommand\sbmattrix[4]{\textnormal{\scriptsize$\left(\begin{array}{cc}#1&#2\\#3&#4\end{array}\right)$\normalsize}}
\newcommand\matrici{\mathbb{M}}
\newcommand\q{\mathbf{q}}
\begin{document}

\title[Branches in even characteristic]{Branches in the Bruhat-Tits tree for local fields of even characteristic}
\author{Luis Arenas-Carmona \& Claudio Bravo}

\maketitle

\begin{abstract}
We extend our previous computations for the relative positions of branches of quaternions to the case of local fields 
of even characteristic. This is a key step to understand the set of maximal orders containing a given suborder, which 
 is useful, for instance, to compute relative spinor images, thus solving the selectivity problem. In our previous 
work, the results where given in terms of the quadratic defect. In the present context, we introduce and 
characterize an analogous concept for Artin-Schreier extensions. It is no longer useful to restrict our attention to
orders generated by pure quaternions, as a separable quadratic extension contains no non-trivial element of null trace.
In this work we state our result for an arbitrary pair of generators, for which we discuss a more general version of 
the Hilbert symbol in this context.
\end{abstract} 
\section{Introduction}\label{s1}

Let $K$ be a local field of characteristic two. It is a known fact that $K$ is isomorphic to the field of Laurent series 
$\mathbb{F}_{2^\tau}((\pi))$ over the finite field $\mathbb{F}_{2^\tau}$ for some positive integer $\tau$. 
We assume  $K=\mathbb{F}_{2^\tau}((\pi))$ in all that follows. In particular, the ring of integers is the power series
ring $\mathcal{O}= \mathbb{F}_{2^\tau}[[\pi]]$, $\pi$ is a uniformizing parameter, and 
$\mathbb{K}=\mathbb{F}_{2^\tau}$ is the residue field of $K$. We let 
$\nu: K\rightarrow \mathbb{Z}\cup \lbrace \infty \rbrace$ be the usual valuation and, for any fractional ideal 
$I\subset K$, we denote by $\nu(I)$ the valuation $\nu(a)$ of any element $a$ satisfying $I=a\mathcal{O}$. 
We also use the absolute value $|a|=c^{\nu(a)}$, where $c<1$ is a fix positive real number.  We let $\matrici_2(K)$ 
denote the ring of 2-by-2 matrices with coefficients in $K$,
and we identify the scalar $\lambda$ with the scalar matrix \scriptsize$\bmattrix \lambda00\lambda$ \normalsize
whenever confusion is unlikely or irrelevant.

The Bruhat-Tits tree $\mathfrak{t}(K)$, also called the BTT in the sequel, is defined as a graph
whose vertices are the closed balls $B$ in $K$, with an edge joining two balls if and only if one 
is a maximal proper sub-ball of the other. To talk with ease of pictures, we define the level of any such vertex 
as the integer $-r \in \mathbb{Z}$ such that $B=B_a^{[r]}$ is the ball of center $a$ and radius $|\pi^r|$. 
We also write $\nu(B)=r$, which is an extension of our previous notation as fractional ideals are balls centered 
at $0$.

There is a natural bijection between the vertex set $V_{\mathfrak{t}(K)}$ of this graph and the set of maximal
orders in $\matrici_2(K)$ given by equation (\ref{balltoorder}) in \S1.1.
Let $\Omega\subseteq\mathbb{M}_2(K)$ be an arbitrary order. The set 
$S_K(\Omega)$ of maximal orders in $\mathbb{M}_2(K)$ containing $\Omega$ can be describe as the vertex 
set of a full subgraph $\mathfrak{s}_K(\Omega)\subseteq\mathfrak{t}(K)$ (c.f. \cite[\S 1]{a-cb1}).
The graph $\mathfrak{s}_K(\Omega)$, which we call the branch of $\Omega$ in all that
follows, is usually a tubular neighborhood of a line $\mathfrak{m}_K(\Omega)$, i.e., a thick line with stem $\mathfrak{m}_K(\Omega)$ as defined in \S\ref{s1.1}, except for some specific orders, namely
(c.f. \cite[Prop. 5.3]{Eichler2} and \cite[Prop. 5.4]{Eichler2}):
\begin{enumerate}
\item The scalar ring $\Omega=\mathcal{O}\sbmattrix1001\cong\mathcal{O}$, denoted just by 
$\mathcal{O}$ in what follows,  is contained in every maximal order. 
\item The nilpotent order $\Omega=\oink[u]$, where $u\in\matrici_2(K)\backslash\{0,1\}$ is nilpotent, 
has a branch $\mathfrak{s}_K(\Omega)$ called an infinite foliage (c.f. \cite{a-cb1}), which  can be described
as the union of a strictly increasing sequence of thick lines with a common leaf.
\end{enumerate}

\begin{remark}
 In the context of Diophantine 
approximation, infinite foliages are called horoballs (c.f. \cite{Paulin}).
\end{remark}

An explicit description of the graph $\mathfrak{s}_K(\Omega)$ simplifies the study of embeddings of the order
$\Omega$ into Eichler orders, or any other intersection of maximal orders, like rank-$3$ orders 
\cite[Lemma 3.2]{omeat}.   It also allows us to compute the spinor image and, therefore, explicitly describing
spinor class fields or representation fields. Both play a central role in solving the selectivity problem
\cite[\S2]{rffco}.  

The purpose of the present work is to extend the results of \cite{a-cb1} to local field with even characteristic, i.e.,
 we intend to characterize the graphs $\mathfrak{s}_K(\Omega)$, when $\Omega=\mathcal{O}[q_1,q_2]$ is the 
order generated by two elements $q_1,q_2\in\matrici_2(K)$. 
As in our previous work, our approach includes the use of 
an embedding of the graph $\mathfrak{t}(K)$, or more precisely a suitable subdivision of $\mathfrak{t}(K)$, into 
the graph $\mathfrak{t}(L)$, for a suitable finite field extension $L/K$ that depends on $q_1$ and 
$q_2$ (c.f. \cite[\S 1]{a-cb1}).
 One of our main results extends, to the present setting, the explicit 
formulas obtained in \cite{osbotbtt} and \cite{a-cb1} to compute the numerical invariants that describe 
the branch $\mathfrak{s}_K(\Omega)$ in terms of the relations satisfied by
the generators $q_1,q_2 \in \matrici_2(K)$. 
  In \cite{a-cb1} we restricted ourselves to the case $\mathrm{tr}(q_1)=
\mathrm{tr}(q_2)=0$, but this is too restrictive in the present setting, as a quadratic extension
 might contain no non-trivial
trace-zero elements. For this reason, we assume throughout that $q_1$ and $q_2$ are not scalars 
and that they satisfy the equations
\begin{equation}\label{133}
\begin{array}{l}
q_1^2+a_1q_1+b_1=q_2^2+a_2q_2+b_2=0,\\ \Lambda(q_1,q_2):=q_1(q_2+a_2)+q_2(q_1+a_1)=\lambda, 
\end{array}
\end{equation}
for some elements $a_1,a_2,b_1,b_2,\lambda \in K$. We call the matrix $\Lambda(q_1,q_2)$ the symmetric 
product by analogy with the corresponding concept in odd charasteristic (c.f. \cite[\S 2]{a-cb1}). By an explicit
 computation, using the quaternion involution
\small
\begin{equation}\label{conj1}
A=\bmattrix abcd\mapsto
\overline{A}=\bmattrix dbca,
\end{equation}
\normalsize
we can see that $\Lambda(q_1,q_2)= q_1\bar{q_2}+q_2\bar{q_1}$ is a scalar matrix. 
In fact 
\begin{equation}\label{conj2}
\Lambda\left[\bmattrix abcd,\bmattrix {a'}{b'}{c'}{d'}\right]=(ad'+bc'+cb'+da')\bmattrix1001.
\end{equation}
Note that $\overline{A}= \det(A) A^{-1}$, when $A \in \mathbb{M}_2(K)$ is invertible. The standard generators $q_1=i$ and $q_2=j$ of an even characteristic quaternion algebra are given by 
$\lambda=a_1=0$ and $a_2=1$ in \eqref{133},  see\cite{vigneras}.

In our previous work, the results are given in terms of the quadratic defect. For this reason, we need both,
to study  the quadratic defect  in even characteristic and to introduce an analog for Artin-Schreier extensions, 
which we call the Artin-Schreier defect in this work. Both play a significant role in the new context. 

\vspace{2mm}

\subsection{Conventions on graphs, orders and  Moebius transformations}\label{s1.1}
In all of this work, a graph $\mathfrak{g}$ is a set of vertices $V_\mathfrak{g}$ endowed with a 
symmetric relation "$-_{\mathfrak{g}}$" that we call the neighborhood relation. 
Two vertices $v$ and $v'$ satisfying $v-_{\mathfrak{g}}v'$ are called neighbors.
 A subgraph of $\mathfrak{g}$ is any graph $\mathfrak{h}$  with a vertex set 
$V_{\mathfrak{h}} \subseteq V_{\mathfrak{g}}$ that satisfies the statement
$$\forall v,v'\in V_{\mathfrak{h}}:  \quad \left[ (v-_{\mathfrak{h}}v')\Rightarrow(v-_{\mathfrak{g}}v')\right] .$$
When the converse holds for every pair $(v,v') \in V_{\mathfrak{h}}\times V_{\mathfrak{h}}$, 
the graph $\mathfrak{h}$ is called a full subgraph of $\mathfrak{g}$. All subgraphs in 
this work are assumed to be full. The intersection of a family of full subgraphs is well defined with these conventions, 
and it is also a full subgraph. The valency of a vertex $v\in V_{\mathfrak{g}}$ is defined as the cardinality of its set 
of neighbors. 
When every vertex in a connected graph $V_{\mathfrak{g}}$ has valency two or one, we call the valency-one
vertices the endpoints of $\mathfrak{g}$. 
A finite walk in $\mathfrak{g}$ is a sequence of vertices $\underline{w}= v_0v_1\dots v_r$ satisfying the following conditions:
\begin{enumerate}
\item $v_i-_{\mathfrak{g}}v_{i+1}$, for $i=0,\dots,r-1$, and
\item $v_i\neq v_{i+2}$, for $i=0,\dots,r-2$.
\end{enumerate}
The latter is called the no-backtracking condition. We often emphasize the vertices $v_0$, or initial vertex,
and $v_r$, the final vertex, by saying a walk from $v_0$ to $v_r$.
A graph $\mathfrak{g}$ is called connected whenever there is a walk from every vertex $v_0\in V_\mathfrak{g}$ 
to every vertex $v_r\in V_\mathfrak{g}$. A cycle is a walk $v_0v_1\dots v_r$ for which $v_r=v_0$. A tree is a 
connected graph with no cycles. Equivalently, a graph $\mathfrak{g}$ is a tree if there exists a unique walk 
from $v_0$ to $v_r$ 
for any pair of vertices $(v_0,v_r)\in V_{\mathfrak{g}}\times V_{\mathfrak{g}}$. 
It is easy to see that walks in a tree have no repeated vertices. All graphs considered in this work are trees. 
The integer $r$ above is called the length of the walk, and written $r=:l(\underline{w})$. We consider the trivial 
sequence $v_0$ as a walk of length $0$. 
We also define infinite walks of two types:
\begin{itemize}
\item A single infinite walk is a sequence of the form $\underline{w}=v_0v_1\dots$, with one vertex for each
 natural number, satisfying (1) and (2) as above. We define $l(\underline{w}):=\infty$.
\item A double infinite walk is a sequence with one vertex for each integer, i.e., $\underline{w}=\dots v_{-1}v_0v_1\dots$,
also assuming (1) and (2). By convention, we write $l(\underline{w}):=2\infty$. \end{itemize}
As usual, the double infinite walks $\dots v_{-1}v_0v_1\dots$ and $\dots v'_{-1}v'_0v'_1\dots$ are considered as equal whenever there is a fixed $m\in\mathbb{Z}$ satisfying $v'_t=v_{t+m}$ for every  $t\in\mathbb{Z}$. 
We define an end of the graph 
as an equivalence class of single infinite walks, where $v_0v_1\dots$ and $v'_0v'_1\dots$ are equivalent
when there is a fixed $m$ satisfying $v'_t=v_{t+m}$ for every sufficiently large positive integer $t$. 
In pictures, we usually visualize an end as a   star ($\star$) at the border of the tree. When 
$\mathfrak{h}$ is a full subgraph of $\mathfrak{g}$, there is a natural identification between the set $\partial(\mathfrak{h})$  of ends 
of $\mathfrak{h}$, on one hand, and, on the other, the set of ends of $\mathfrak{g}$ for which a walk in 
$\mathfrak{h}$ (i.e., a walk that 
contains only vertices in $V_{\mathfrak{h}}$) can be chosen as a representative. We exploit this 
identification by  notational abuses of the type $a\in\partial(\mathfrak{h})$, for an end $a\in\partial(\frak{g})$,
 or many of its  verbal equivalents. It is easy to see that the ends of the BTT $\mathfrak{t}(K)$ are 
naturally in correspondence  with the $K$-points of 
the projective line $\mathbb{P}^1$ (c.f. \cite[\S 4]{omeat}).

For any walk $\underline{w}$ in a tree $\mathfrak{g}$, we define the line $\mathfrak{p}_{\underline{w}}$ as the smallest subtree of $\mathfrak{g}$ containing the vertices in the sequence $\underline{w}$. If $\underline{w}$ is not explicit, we just say a line $\mathfrak{p}$. If $\underline{w}=v_0 v_1\cdots v_r$, we also write $\mathfrak{p}[v_0,v_r]=\mathfrak{p}_{\underline{w}}$. Similarly, we denote by $\mathfrak{p}(a,b)$ the graph whose vertices are precisely the vertices in a double infinite walk joining the ends $a,b \in \mathfrak{g}$. The last one is called a maximal 
path, or sometimes simply a path, in the sequel. A ray $\mathfrak{p}[v,a)$ is defined analogously.
The length of a line is the length of the associated walk, and written analogously, e. g., 
$l(\mathfrak{p}_{\underline{w}}) =l(\underline{w})$.
 We say that $v_0$ is an $r$-neighbor of $v_r$ is there exist a 
line of length $r$ whose endpoints are precisely $v_0$ and $v_r$. A tubular neighborhood $\mathfrak{p}^{[n]}$, of some
 line $\mathfrak{p}$, is the subtree of $\mathfrak{g}$ containing precisely the $s$-neighbors of vertices in 
$\mathfrak{p}$, for all $s \leq n$.

As a vertex in the BTT is a ball, which is fully determined by its radius and one center, it is immediate 
that $V_{\mathfrak{t}(K)}$ can be identified with a subset of $V_{\mathfrak{t}(L)}$, for any finite field extension $L/K$.
In what follows, a vertex in $V_{\mathfrak{t}(L)}$ is said to be defined over $K$, if it corresponds to a
vertex in $V_{\mathfrak{t}(K)}$. This definition extends naturally to intermediate fields.
 However, this identification cannot define a morphism of graphs unless $L/K$ is an unramified extension. 
We fix this problem by a normalization of the distance function. For any pair of vertices $(v,v')\in
V_{\mathfrak{t}(L)}\times V_{\mathfrak{t}(L)}$, we define their distance by $\delta(v,v')=
\frac1{e(L/K)}l(\mathfrak{p}[v,v']_L)$, where $\mathfrak{p}[v,v']_L$ is the corresponding
line in $\mathfrak{t}(L)$, and $e(L/K)$ denotes the ramification index. This distance is independent of the field $L$. 
We also normalize the valuation on $L$ in a similar fashion. For example, a uniformizing parameter $\pi_L$ of $L$ 
has the valuation $\nu(\pi_L)=\frac 1{e(L/K)}$. A similar convention applies to the absolute value. A real number is 
said to be defined over an intermediate field $F$ if it is a multiple of the valuation of the
corresponding uniformizer $\nu(\pi_F)$. We apply this definition to both, valuations of elements or
distance between vertices or subgraphs. Next result is trivial but useful:

\begin{lemma}\label{lem11}
Let  $v$ and $v'$ be two vertices of $\mathfrak{t}(L)$ that are both defined over an intermediate field $F$. 
Then, a vertex $v''$  in the line $\mathfrak{p}[v,v']_L$
is defined over $F$ if and only if its distance to either endpoint is defined over $F$.  
\end{lemma}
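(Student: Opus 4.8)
The plan is to deduce the statement from an explicit list of the vertices lying on $\mathfrak{p}[v,v']_L$. I would start with two easy reductions. Since $v$ and $v'$ are both defined over $F$, the independence of $\delta$ from the ambient field gives $\delta(v,v')=\frac{1}{e(F/K)}\,l\big(\mathfrak{p}[v,v']_F\big)\in\frac{1}{e(F/K)}\mathbb{Z}$, so $\delta(v,v')$ is defined over $F$; and for any vertex $v''$ on the line one has $\delta(v,v'')+\delta(v'',v')=\delta(v,v')$, so ``$\delta(v'',v)$ is defined over $F$'' holds if and only if ``$\delta(v'',v')$ is defined over $F$'' does. Hence it suffices to argue with the single endpoint $v$. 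Also, if $v''$ is itself defined over $F$, the same argument applied to the pair $v,v''$ shows that $\delta(v,v'')$ is defined over $F$; this already settles one of the two implications.

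For the converse I would parametrize $\mathfrak{p}[v,v']_L$ by the radii of the balls along it. Write $v=B_a^{[r]}$ and $v'=B_{a'}^{[r']}$; since $v$ and $v'$ are balls having a center and a radius over $F$, the centers may be chosen with $a,a'\in F$, and then $r,r'\in\frac{1}{e(F/K)}\mathbb{Z}$. Any two balls being either nested or disjoint, the smallest ball containing both is of the form $B_a^{[w]}=B_{a'}^{[w]}$ with $w\leq\min(r,r')$; here $w=\nu(a-a')\in\frac{1}{e(F/K)}\mathbb{Z}$ when the balls are disjoint, and $w=\min(r,r')$ (with $a=a'$ admissible) when one contains the other, so in either case $w$ is defined over $F$. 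The line $\mathfrak{p}[v,v']_L$ is then exactly the chain of balls $B_a^{[t]}$ with $w\leq t\leq r$ followed by the chain $B_{a'}^{[t]}$ with $w\leq t\leq r'$, consecutive levels differing by $\nu(\pi_L)=1/e(L/K)$ --- this is the standard description of a geodesic in the tree of balls, and uniqueness of geodesics in a tree guarantees there is nothing more on the line. Now a vertex $B_c^{[t]}$ with $c\in\{a,a'\}\subseteq F$ is defined over $F$ precisely when $t\in\frac{1}{e(F/K)}\mathbb{Z}$, because an $F$-rational ball has radius in the value group of $F$, while conversely for such $t$ one can take as radius an element of $F$ of valuation $t$. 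On the other hand $\delta(v,B_a^{[t]})=r-t$ and $\delta(v,B_{a'}^{[t]})=(r-w)+(t-w)$, and since $r$ and $w$ already lie in $\frac{1}{e(F/K)}\mathbb{Z}$, each of these is defined over $F$ exactly when $t\in\frac{1}{e(F/K)}\mathbb{Z}$. Comparing the two conditions for a vertex $v''=B_c^{[t]}$ on the line closes the argument.

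The proof is entirely bookkeeping, so the only points needing care are, first, keeping straight the two meanings of ``defined over $F$'' --- for a vertex it is the existence of a center and a radius over $F$, for a real number it is being a multiple of $\nu(\pi_F)=1/e(F/K)$ --- and, second, justifying the explicit list of vertices on $\mathfrak{p}[v,v']_L$. For the latter I would invoke only the facts that in an ultrametric field the tree-geodesic between two nested balls $B_a^{[w]}\supseteq B_a^{[r]}$ is the chain $\{B_a^{[t]}:w\leq t\leq r\}$, that a disjoint pair is joined through the ball of radius $|a-a'|$, and that a tree has a unique geodesic between any two vertices. Once this list is in hand, the lemma follows from the elementary remark that, $r$ and $w$ being multiples of $1/e(F/K)$, stepping by $1/e(F/K)$ along the line is exactly what passes from one vertex defined over $F$ to the next, all intermediate vertices sitting at levels outside $\frac{1}{e(F/K)}\mathbb{Z}$.
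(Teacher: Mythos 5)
Your proof is correct. Note that the paper offers no argument at all for this lemma (it is introduced with ``Next result is trivial but useful''), so there is nothing to diverge from; your write-up simply supplies the bookkeeping the authors left implicit, and it is the natural one: reduce to one endpoint via additivity of $\delta$ along the line, then describe $\mathfrak{p}[v,v']_L$ explicitly as the two chains $B_a^{[t]}$, $w\leq t\leq r$, and $B_{a'}^{[t]}$, $w\leq t\leq r'$, through the join at level $w=\nu(a-a')$ (or $\min(r,r')$ in the nested case), and observe that both ``$v''$ defined over $F$'' and ``$\delta(v,v'')$ defined over $F$'' amount to $t\in\frac{1}{e(F/K)}\mathbb{Z}$ because $r$, $r'$ and $w$ already lie there. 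The two points you flag as needing care are indeed the only ones: the intrinsic well-definedness of the level of a ball (closed balls in a discretely valued field attain their radius, so an $F$-rational vertex forces $t$ into the value group of $F$), and uniqueness of geodesics in a tree to certify that your list of vertices is exhaustive; both are handled.
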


A line with endpoints defined over $F$ is said to be defined over $F$. The same applies to maximal paths and rays,
replacing endpoints by ends if needed.
Let $\mathfrak{p}(a,b)_L$ be the unique maximal path in $\mathfrak{t}(L)$ whose ends are 
$a,b \in \mathbb{P}^1(L)$. When the extension $L/K$ is Galois, the group
$\mathrm{Gal}(L/K)$ acts on either set $\mathbb{P}^1(L)$ or $\mathfrak{t}(L)$.
 These two actions are compatible, i.e., paths in $\mathfrak{t}(L)$ satisfy the relation
$$\sigma\Big(\mathfrak{p}(a,b)_L\Big)=\mathfrak{p}\Big(\sigma(a),\sigma(b)\Big)_L,\qquad
\forall\sigma\in\mathrm{Gal}(L/K),\ \forall a,b\in\mathbb{P}^1(L)$$ (c.f. \cite[\S 3]{a-cb1}). There is also a natural action of the group $\mathcal{M}(L)$ of Moebius transformations on the set of balls.
For every element $\mu\in\mathcal{M}(L)$ and every ball $B\subseteq L$, we define:
\begin{enumerate}
\item $\mu*B=\mu(B)$, if the latter is a ball, while
\item $\mu*B$ is the smallest ball properly containing $\mu(B^{\mathit{c}})$,
if $\mu(B^{\mathit{c}})$ is a ball.
\end{enumerate}
The complement above is defined in $\mathbb{P}^1(L)$, so $\infty\in B^{\mathit{c}}$ for any ball $B$.
 In fact, if we remove the ball $B$, as a vertex, from the BBT, the remaining graph has $\kappa+1$ connected components, where $\kappa$ is the cardinality of the residue field of $L$, and their corresponding sets of ends are the sets in the decomposition $\mathbb{P}^1(L)=B^{\mathit{c}}\cup B_1\cup\cdots\cup B_r$, where $B_1,\dots,B_r$ are all maximal proper sub-balls of $B$. The action by Moebius transformation permute balls by permuting such decompositions. See 
 \cite{omeat} or \cite{a-cb1} for details. We also have a compatibility property $\sigma(\mu*B)=\sigma(\mu)*\sigma(B)$, for any element $\sigma\in \mathrm{Gal}(L/K)$, any transformation $\mu\in\mathcal{M}(L)$ and any ball $B\subseteq L$.

Following \cite{a-cb1}, vertices in $\mathfrak{t}(L)$ that are not defined over $K$ are called
here ghost vertices. Any maximal path $\mathfrak{p}(a,b)_L\subseteq \mathfrak{t}(L)$ defined over $K$
is identified with the corresponding path in $\mathfrak{t}(K)$. 
Maximal paths that are not of this form are called ghost paths.

To every ball $B=B_a^{[\nu(\rho)]}$ with $a,\rho\in L$, we associate the maximal order 
\begin{equation}\label{balltoorder}
\mathfrak{D}_B=\mathrm{End}_{\oink_L}\left[\left\langle\left(\begin{array}ca\\1\end{array}\right),
\left(\begin{array}c\rho\\0\end{array}\right)\right\rangle \right].
\end{equation}

This defines a one-to-one correspondence between balls and maximal orders that is assumed in all that follows. It is compatible with field extensions when the order $\Da\subseteq\matrici_2(K)$ is identified with $\Da\otimes_\mathcal{O}\mathcal{O}_L\subseteq\matrici_2(L)$. The branch $\mathfrak{s}_K(\Omega)$ is defined as the largest full
subgraph whose vertices are balls corresponding to maximal orders containing $\Omega$.
As usual, we write $\mathfrak{s}_K(q_1,\dots,q_n)$ instead of $\mathfrak{s}_K(\Omega)$
if $\Omega=\mathcal{O}[q_1,\dots,q_n]$ is the order generated by $q_1, \cdots, q_n$. Clearly $\mathfrak{s}_K(q_1,\dots,q_n)= \bigcap_{i=1}^n\mathfrak{s}_K(q_i)$. With these conventions, the vertex set $V_{\mathfrak{s}_K(\Omega)}$ can be identified with
$V_{\mathfrak{s}_L(\Omega')}\cap V_{\mathfrak{t}(K)}$, where $\Omega'= \oink_L \otimes_{\oink} \Omega$. The same applies to other notations like $\mathfrak{s}_K(q_1,\dots,q_n)$ and $\mathfrak{s}_L(q_1,\dots,q_n)$.

A thick line $\mathfrak{p}$ is defined over $K$ if its stem and its depth are defined over $K$. Similarly, an infinite
foliage is defined over $K$ if its end and, at least, one leaf (valency-one vertex) is defined over $K$.
The following results are easy consequence of Lemma \ref{lem11}:

\begin{lemma}\label{lem12}
A thick line is defined over $K$ if and only if the stem and, at least, one leaf are defined over $K$. 
\end{lemma}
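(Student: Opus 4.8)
The plan is to reduce everything to a statement about the depth. By the definition recalled just before the lemma, a thick line $\mathfrak{p}^{[n]}$ is defined over $K$ exactly when its stem $\mathfrak{p}$ and its depth $n$ are both defined over $K$; since each side of the asserted equivalence already presupposes that $\mathfrak{p}$ is defined over $K$, it suffices to prove: if $\mathfrak{p}$ is defined over $K$, then $n$ is defined over $K$ if and only if $\mathfrak{p}^{[n]}$ has a leaf defined over $K$. Being defined over $K$, the line $\mathfrak{p}$ is identified with a line in $\mathfrak{t}(K)$, so it carries vertices defined over $K$. I will assume $n>0$, the case $n=0$ being immediate; then the leaves of $\mathfrak{p}^{[n]}$ are exactly the vertices $v$ with $\delta(v,\mathfrak{p})=n$, and I write $w(v)$ for the unique vertex of $\mathfrak{p}$ closest to such a $v$.

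For the implication ``$n$ defined over $K$ $\Rightarrow$ some leaf defined over $K$'', choose a vertex $w$ of $\mathfrak{p}$ defined over $K$. In $\mathfrak{t}(K)$ it has at least three neighbours, at most two of which lie on $\mathfrak{p}$, so there is a walk $w=w_0,w_1,\dots,w_n$ in $\mathfrak{t}(K)$ with $w_1\notin\mathfrak{p}$ (here one uses $n\in\mathbb{Z}$); by the no-backtracking condition it never returns to $\mathfrak{p}$, hence $w(w_n)=w$ and $\delta(w_n,\mathfrak{p})=\delta(w_n,w)=n$. Thus $v:=w_n$ is a leaf of $\mathfrak{p}^{[n]}$ which is a vertex of $\mathfrak{t}(K)$, i.e. defined over $K$.

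For the converse, let $v$ be a leaf defined over $K$ and put $w=w(v)$. If $w$ is an endpoint of $\mathfrak{p}$ it is already defined over $K$; otherwise choose vertices $P,Q$ of $\mathfrak{p}$, defined over $K$, lying on opposite sides of $w$ along $\mathfrak{p}$, so that $w$ is the median of $v,P,Q$ in $\mathfrak{t}(L)$ and $n=\delta(v,\mathfrak{p})=\delta(v,w)$. The crux is that this median is defined over $K$, equivalently that $\delta(v,w)\in\mathbb{Z}$. Here I use that distances between vertices defined over $K$ are integers (such a geodesic is a line with endpoints defined over $K$, hence identified with, and a subdivision of, a line in $\mathfrak{t}(K)$), together with the elementary fact that the sum of the three pairwise distances of three vertices in a tree is even: since $2\,\delta(v,w)=\delta(v,P)+\delta(v,Q)-\delta(P,Q)$, this quantity has the same parity as the sum $\delta(v,P)+\delta(v,Q)+\delta(P,Q)$, which is even, so $\delta(v,w)\in\mathbb{Z}$. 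With Lemma \ref{lem11} (applied to $w$ as an interior vertex of $\mathfrak{p}[v,P]_L$) this also shows $w$ is defined over $K$; in any case $n=\delta(v,w)$ is an integer, i.e. defined over $K$.

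The converse is the only real obstacle: a priori the depth $n$ is merely a multiple of $\nu(\pi_L)=1/e(L/K)$, and one must rule out the non-integral values. Everything comes down to the fact that the closest point of the stem to a $K$-leaf is again defined over $K$ — equivalently, that the median of three vertices of $\mathfrak{t}(L)$ defined over $K$ is again defined over $K$ — which is the one place where the short parity observation above, rather than a bare citation of Lemma \ref{lem11}, is needed; once it is in hand, the normalized-distance formalism set up around Lemma \ref{lem11} delivers the rest.
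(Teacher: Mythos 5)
Your proof is correct, and it follows the route the paper itself intends: the paper offers no written proof beyond declaring the lemma an easy consequence of Lemma \ref{lem11}, and your argument is exactly that reduction (integrality of the depth via normalized distances), with the missing ingredient — that the projection of a $K$-leaf onto a $K$-stem is again defined over $K$ — supplied by your median/parity computation. The only nicety worth making explicit is that the parity fact must be applied in $\mathfrak{t}(K)$, where $v$, $P$, $Q$ are genuine vertices of a simplicial tree with unit edges; since normalized distances between vertices defined over $K$ agree with distances in $\mathfrak{t}(K)$, this is immediate from your own subdivision remark.
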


\begin{lemma}\label{lem13}
If two thick lines or infinite foliages, or one of each, are defined over $K$, then the (unique) smallest path from
one to the other is defined over $K$.
\end{lemma}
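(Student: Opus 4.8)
The plan is to reduce, via the convention stated just before the lemma, to showing that the \emph{endpoints} of the smallest path are defined over $K$, and then to locate those endpoints on a single line that is visibly defined over $K$, so that Lemma~\ref{lem11} applies. Write $X_1,X_2$ for the two objects. First I would dispose of the case $X_1\cap X_2\neq\emptyset$, where the smallest path degenerates to a single vertex and the argument below only simplifies, and assume the two subtrees are disjoint. In a tree the smallest path between disjoint subtrees is the unique line $\mathfrak p[\alpha_1,\alpha_2]_L$ joining the closest points $\alpha_i\in X_i$; moreover, for \emph{any} vertices $\ell_i\in X_i$ the geodesic $\mathfrak p[\ell_1,\ell_2]_L$ visits $\ell_1,\alpha_1,\alpha_2,\ell_2$ in this order, since it leaves $X_1$ exactly at the point of $X_1$ closest to $\ell_2$, which is $\alpha_1$, and symmetrically on the other side. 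I would choose each $\ell_i$ to be a vertex defined over $K$ lying on the \emph{core} $\mathfrak c_i$ of $X_i$ — its stem if $X_i$ is a thick line, the ray $\mathfrak p[\ell_i^{\star},e_i)_L$ from its leaf $\ell_i^{\star}$ to its end $e_i$ if $X_i$ is an infinite foliage — such a vertex existing because $\mathfrak c_i$ is defined over $K$ (for a thick line one may instead take a leaf, using Lemma~\ref{lem12}). Then $\mathfrak p[\ell_1,\ell_2]_L$ is defined over $K$, and by Lemma~\ref{lem11} it suffices to prove that $\delta(\ell_1,\alpha_1)$ and $\delta(\ell_2,\alpha_2)$ are defined over $K$, for then $\alpha_1,\alpha_2$, lying on $\mathfrak p[\ell_1,\ell_2]_L$ at distances defined over $K$ from $\ell_1$, are defined over $K$.

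For the distances I would first record two immediate consequences of Lemma~\ref{lem11}: the distance between two vertices of $\mathfrak t(L)$ both defined over $K$ is defined over $K$, and the median in $\mathfrak t(L)$ of three vertices or ends all defined over $K$ is a vertex defined over $K$. Now fix $i$ and let $f_i\in\mathfrak c_i$ be the vertex of the core closest to $X_{3-i}$, equivalently to $\ell_{3-i}$. Being the median of the two extremities of $\mathfrak c_i$ and of $\ell_{3-i}$ — all defined over $K$ — the vertex $f_i$ is defined over $K$. The point $\alpha_i$ is reached from $f_i$ by moving off the core toward $X_{3-i}$ by the \emph{thickness} of $X_i$ at $f_i$; consequently $\delta(\ell_i,\alpha_i)=\delta(\ell_i,f_i)+\delta(f_i,\alpha_i)$, where $\delta(\ell_i,f_i)$, a distance between two vertices of $\mathfrak c_i$ defined over $K$, is defined over $K$, and $\delta(f_i,\alpha_i)$ is precisely that thickness. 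For a thick line the thickness is constant and equal to the depth, defined over $K$ by hypothesis. For an infinite foliage — which along its core ray is an increasing union of thick lines of bounded depth sharing the leaf $\ell_i^{\star}$, equivalently a horoball based at $e_i$ through $\ell_i^{\star}$ (cf.\ the Remark) — the thickness at a core vertex equals its core-distance from $\ell_i^{\star}$, so $\delta(f_i,\alpha_i)=\delta(\ell_i^{\star},f_i)$, again a distance between two vertices defined over $K$. Thus $\delta(\ell_i,\alpha_i)$ is defined over $K$ in all cases.

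By Lemma~\ref{lem11} this makes $\alpha_1,\alpha_2$ defined over $K$, hence the smallest path $\mathfrak p[\alpha_1,\alpha_2]_L$ is defined over $K$. The one step I expect to be genuinely non-formal is pinning down the thickness function of an infinite foliage along its core ray — i.e.\ making precise its description as an increasing union of thick lines with a common leaf, or as a horoball — since everything else is formal tree geometry or a direct application of Lemma~\ref{lem11}, and the corresponding facts for thick lines are immediate from the definitions. Two routine pieces of bookkeeping remain: the degenerate configurations (the objects meeting, or $X_{3-i}$ approaching $X_i$ through the end or the leaf of its core), where the same identities hold with some $\delta$'s vanishing; and the case where an object faces the other through an end rather than a leaf, where one replaces the relevant leaf by an end and the corresponding line by a ray, choosing a far‑out vertex of that ray defined over $K$ when a form of Lemma~\ref{lem11} with a vertex endpoint is wanted. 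A more uniform alternative, avoiding all case analysis, is to pass to a Galois closure $\widetilde L/K$: a thick line or infinite foliage defined over $K$ is stable under $\mathrm{Gal}(\widetilde L/K)$ because its defining data are fixed, so the bridge between two of them and its endpoints are stable too, hence defined over $K$; this uses only the equally easy equivalence between being defined over $K$ and being fixed by $\mathrm{Gal}(\widetilde L/K)$.
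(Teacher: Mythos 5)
Your main argument is correct and follows the route the paper intends: the paper gives no written proof, asserting only that the lemma is an easy consequence of Lemma \ref{lem11}, and your reduction --- place the bridge endpoints $\alpha_1,\alpha_2$ on the $K$-geodesic $\mathfrak{p}[\ell_1,\ell_2]_L$ joining $K$-vertices of the two objects, then verify that $\delta(\ell_i,\alpha_i)$ is an integer by splitting it at the gate $f_i$ of the core and identifying $\delta(f_i,\alpha_i)$ with the depth (thick line) or with $\delta(\ell_i^{\star},f_i)$ (horoball) --- is precisely the bookkeeping that assertion leaves implicit, and the tree-geometric facts you invoke are all correct.

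One caution about your ``more uniform alternative'': the asserted equivalence between being defined over $K$ and being fixed by $\mathrm{Gal}(\widetilde L/K)$ is false when $\widetilde L/K$ is ramified. Midpoints of edges of $\mathfrak{t}(K)$ with $K$-rational center, such as the ghost vertex $v_{1/2}=B_\xi^{[w-t+\frac{1}{2}]}$ of Fig.~\ref{figure 2}(B) with $\xi\in K$, are Galois-stable vertices of $\mathfrak{t}(\widetilde L)$ that are \emph{not} defined over $K$, since their radius is not an integral power of $|\pi|$; the paper's arguments in \S\S 6--7 rely on exactly such Galois-stable ghost vertices. So Galois-stability of the bridge and its endpoints does not by itself yield the conclusion (it shows the endpoints are fixed, not that their level is integral), and the distance argument of your main proof is the one to keep.
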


\section{Main results}\label{s2}

For any element $a \in K$, we denote by $p_a(X) = X^2+X+a$ the corresponding Artin-Schreier polynomial. 
Note that 
\begin{equation}\label{asprop}
p_{a+b}(x+y)=p_a(x)+p_b(y)\textnormal{ and }p_a(0)=a.
\end{equation}
 We define the Artin-Schreier defect 
$\mathbb{D}(a)$ of $a$ as the fractional ideal $\mathbb{D}(a)= \bigcap_{h \in K} \big(p_a(h)\big)$. 
We define the quadratic defect $\delta(a)$, as usual, by the formula $\delta(a) = 
\bigcap_{h \in K} (h^2+a)$ (c.f. \cite[\S 63:A]{Om}). 
The following result is an analog for the Artin-Schreier defect of the classical characterization of 
the quadratic defect in terms of quadratic extensions (c.f. \cite[\S 63:A \& Theo. 63.4]{Om}).

\begin{theorem}\label{t21}
The image of the Artin-Schreier defect is the set:
$$ S= \lbrace (0) , \mathcal{O} \rbrace \cup \lbrace (\pi^{-2t+1}): t >0  \rbrace.$$
Furthermore, if an element $b$ in the algebraic closure $\bar{K}$ satisfies $p_0(b)=a$, then the following statements hold:
\item[i.-] $\mathbb{D}(a)=\{0\}$ if and only if $b\in K$.
\item[ii.-] $\mathbb{D}(a)=\mathcal{O}$ if and only if $b$ generates an unramified quadratic extension of $K$. 
\item[iii.-]$\mathbb{D}(a)=(\pi^{-2t+1})$, for some $t>0$,  if and only if $b$ generates a ramified quadratic extension of $K$. 
\end{theorem}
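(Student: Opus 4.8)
The plan is to reduce everything to a careful analysis of how much the Artin--Schreier polynomial $p_a(h)=h^2+h+a$ can be ``shrunk'' by varying $h\in K$, working with the explicit model $K=\mathbb F_{2^\tau}((\pi))$. The key observation is the additivity relation \eqref{asprop}: if $a=a'+p_0(y)$ for some $y\in K$, then $\mathbb D(a)=\mathbb D(a')$, since $p_a(h)=p_{a'}(h+y)+\big(p_0(y)\text{ cancels}\big)$ — more precisely $p_a(h-y)=p_{a'}(h)+p_{a+a'}(-y)$ and $a+a'=p_0(y)$ forces the last term to vanish, so the families $\{p_a(h)\}_h$ and $\{p_{a'}(h)\}_h$ coincide. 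Thus $\mathbb D$ is constant on cosets of $p_0(K)=\{y^2+y:y\in K\}$ in $K$, and it suffices to compute $\mathbb D$ on a convenient set of coset representatives.

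The first step is therefore to produce normal forms modulo $p_0(K)$. Using $p_0(y)$ with $y=\pi^{-m}$ one kills, from any Laurent series of even negative valuation $-2m$, its leading term up to a square in $\mathbb K$; iterating, every element is congruent mod $p_0(K)$ either to an element of $\mathcal O$, or to an element whose polar part is a sum of terms $c_j\pi^{-2j+1}$ with $j>0$, $c_j\in\mathbb K$ (odd negative exponents only), and among these the valuation $-2t+1$ of the ``bottom'' surviving term is an invariant of the coset. A second reduction handles the integral case: for $a\in\mathcal O$, writing $a\equiv \bar a\pmod{\pi}$ with $\bar a\in\mathbb K$, the extension $\mathbb F_{2^\tau}(\beta)/\mathbb F_{2^\tau}$ with $\beta^2+\beta=\bar a$ is the unramified quadratic extension precisely when $\bar a\notin p_0(\mathbb K)=\wp(\mathbb K)$ (Artin--Schreier over the finite field), and when $\bar a\in\wp(\mathbb K)$ one can subtract $p_0$ of a constant to assume $\nu(a)>0$, then lift a root by successive approximation (Hensel/Newton for $p_a$, which is separable) to get $b\in\mathcal O\subseteq K$. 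This gives (i) and (ii): $\mathbb D(a)=\{0\}$ iff $b\in K$ iff the coset of $a$ meets $\mathcal O$ with integral residue in $\wp(\mathbb K)$; $\mathbb D(a)=\mathcal O$ iff the coset meets $\mathcal O$ but with residue outside $\wp(\mathbb K)$, i.e. $b$ generates the unramified quadratic extension.

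The third and most substantial step is the genuinely ramified case. Here $a\equiv \sum_{0<j\le t} c_j\pi^{-2j+1}\pmod{p_0(K)}$ with $c_t\neq0$ and $t$ minimal; one shows directly that $\inf_h\nu\big(p_a(h)\big)=-2t+1$ and that this infimum is attained, so $\mathbb D(a)=(\pi^{-2t+1})$. The inequality $\nu(p_a(h))\ge -2t+1$ for all $h$ is the heart of the matter: if $\nu(h)=-s<0$ then $\nu(h^2)=-2s$ is even, so the odd-exponent coefficient $c_t\pi^{-2t+1}$ of $a$ can never be cancelled by $h^2+h$ unless $-2t+1\ge -2s+1$, and a short case analysis on whether $-2s\le -2t+1$ pins down the minimum; choosing $h$ to cancel all terms of $a$ of valuation $<-2t+1$ (possible because those have even valuation and are matched against $h^2$) realizes the bound. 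Conversely, the classical theory of Artin--Schreier extensions of local fields says that $K(\beta)/K$, $\beta^2+\beta=a$, is ramified of degree $2$ exactly when $a$ cannot be moved into $\mathcal O$ by $p_0(K)$, i.e. when its normal form has a genuine odd pole, which matches case (iii); and the conductor exponent of that extension equals $2t-1$ (the classical depth-of-ramification formula), independently confirming $\mathbb D(a)=(\pi^{-2t+1})$. Finally, that the listed set $S$ is exactly the image follows by exhibiting each value: $a=0$ gives $(0)$; any $\bar a\in\mathbb K\setminus\wp(\mathbb K)$ gives $\mathcal O$; and $a=\pi^{-2t+1}$ gives $(\pi^{-2t+1})$ for each $t>0$; while the normal-form analysis shows no other ideals occur.

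The main obstacle I expect is the sharp lower bound $\nu(p_a(h))\ge -2t+1$ in the ramified case together with the claim that $t$ (the position of the lowest odd pole in the normal form) is well defined: one must argue carefully that the reduction modulo $p_0(K)$ cannot be continued to remove that odd term, which amounts to the fact that $p_0(K)\cap\{$odd-exponent Laurent polynomials$\}$ contributes nothing below valuation $0$ — equivalently that no $y^2+y$ has a single isolated odd negative-valuation term. All the other steps (normal forms, the finite-field Artin--Schreier dichotomy, Hensel lifting for the separable polynomial $p_a$) are routine; I would invoke \cite[\S 63]{Om} for the parallel statements about the quadratic defect $\delta$ wherever the formal structure is identical.
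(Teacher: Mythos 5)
Your proposal is correct in substance and draws on the same circle of ideas as the paper --- invariance of $\mathbb{D}$ on cosets of $p_0(K)$ (Lemma \ref{lema1}), perfectness of the residue field to dispose of even negative valuations, Hensel's lemma for the split/unramified dichotomy, and a valuation-parity (dominance) argument at odd poles --- but it organizes them differently. You put $a$ itself into a normal form modulo $p_0(K)$ (removing all even-order polar terms) and then compute $\mathbb{D}$ of the normal form, while the paper starts from an attained generator $p_a(b)$ of $\mathbb{D}(a)$ (Lemma \ref{lema1}) and analyzes its valuation: positive valuation is pushed to a root by Hensel, even negative valuation is improved by extracting a square root of the leading unit, an odd pole yields ramification via the explicit Eisenstein polynomial satisfied by $\pi^t(\alpha+b)$, and the unit case yields an irreducible residual polynomial, hence an unramified extension. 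Your route buys a clean classification of the cosets of $p_0(K)$, at the price of outsourcing the implication ``genuine odd pole $\Rightarrow$ ramified'' to classical Artin--Schreier ramification theory; a two-line direct substitute exists (if $K(b)/K$ were unramified then $\nu(b)$ would be an integer, so $\nu(a)=\nu(b^2+b)$ would be either $\geq 0$ or even, contradicting $\nu(a)=-2t+1$), or you can reproduce the paper's Eisenstein computation. Also, the conductor exponent you quote as a cross-check should be $2t$ (pole order plus one), not $2t-1$; this is harmless since nothing rests on it.

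One direction slip must be fixed in the key step. Since $\mathbb{D}(a)=\bigcap_{h}\big(p_a(h)\big)$ is the smallest member of a chain of fractional ideals, what you need is $\sup_h \nu\big(p_a(h)\big)=-2t+1$, i.e.\ the upper bound $\nu\big(p_a(h)\big)\leq -2t+1$ for every $h$ (the odd pole of $a$ can never be cancelled, because $\nu(h^2+h)=2\nu(h)$ is even whenever $|h|>1$ and $\nu(h^2+h)\geq 0$ otherwise), together with attainment at $h=0$. As written you assert the infimum and the inequality $\nu\big(p_a(h)\big)\geq -2t+1$, which is false as stated (take $\nu(h)$ very negative) and would not imply the value of $\mathbb{D}(a)$ even if true; the parity mechanism you describe is, however, exactly what proves the correct bound, and it is the paper's own Dominance Principle argument for $a=\pi^{-2t+1}$, valid verbatim for any $a$ of odd negative valuation.
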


For the sake of uniformity, we set $\mathfrak{m}_K(\Omega)=\mathfrak{s}_K(\Omega)$ if $\mathfrak{s}_K(\Omega)$ 
is an infinite foliage. This is not perfect, but seems to be the alternative that makes statements simpler in 
Theorem 2.2 below.
By the stem length of a thick line we mean the length of its stem. By convention, we set the stem length of
an infinite foliage as $\infty$. As before, this choice is not perfect. 
For a matrix $q$ that is integral over $\oink$ we write $l(q)$ for the stem length of its branch, which is always in 
the set $\{0,1,\infty,2\infty\}$ \cite{Eichler2}.

We say that $f(X)\in K[X]$ is ramified (respectively, unramified) if its decomposition field is ramified (respectively, unramified) over $K$. Similarly, we say that $f(X)\in K[X]$ is separable if its roots in $\bar{K}$ are different. In any other case, we say that $f(X) \in K[X]$ is inseparable. For example, the polynomial $f(X)=X^2+cX+d$ is separable if and only if $c\neq 0$. We define $A^s$ as the set of either reducible or unramified separable polynomials and $B^s$ as the set of ramified separable irreducible polynomials. The sets $A^i$ and $B^i$ are defined analogously in the inseparable case. Note, however, that there is no irreducible inseparable unramified polynomial.
Let $\lfloor a\rfloor$ denote the largest integer not exceeding $a$.

\begin{theorem}\label{t22}
Let $m_i(X) = X^2+a_iX+b_i \in \mathcal{O}[X]$, for $i\in\{1,2\}$, be quadratic polynomials, and let $q_1,q_2\in \mathbb{M}_2(K) \backslash K$ be matrices satisfying $m_1(q_1)=0$, $m_2(q_2)=0$ 
and $q_1\overline{q_2}+q_2\overline{q_1}=\lambda$. 
Consider $\Delta=\Delta(\lambda, m_1, m_2)=
\lambda^2+ a_1 a_2 \lambda+ a_1^2 b_2+ a_2^2 b_1$, and let $d_f$ be the fake distance defined by 
cases as in Table \ref{table 1}, where we use the following convention:
$$t_i = \textnormal{\Huge$\lfloor$} \frac{\nu(I_i)-1}{2}\textnormal{\Huge$\rfloor$}, \textnormal{ where } 
I_i=\left\{\begin{array}{cl}
 \mathbb{D}\left( \frac{b_i}{a_i^2}\right)&\textnormal{ if }m_i\textnormal{ is separable}\\
\delta(b_i) &\textnormal{ otherwise }\end{array}\right.. $$
Then, if $d_f>0$, it equals the distance between the stems, otherwise the length of their intersection is
$\mathrm{min}\{-2d_f,l(q_1),l(q_2)\}$, except in the following cases:
\begin{itemize}
\item When  $m_1$ and $m_2$ are both reducible inseparable polynomials, and $\lambda \neq 0$, 
the depth of $\mathfrak{m}_K(q_1) \cap \mathfrak{m}_K(q_2)$ is $\lfloor \frac{\nu(\lambda)}{2} \rfloor$. Here
the stem of $\mathfrak{m}_K(q_1) \cap \mathfrak{m}_K(q_2)$ is a vertex if $\nu(\lambda)$ is even and
an edge otherwise. If $\lambda=0$, then either
$\mathfrak{m}_K(q_1) \subseteq \mathfrak{m}_K(q_2)$ or $\mathfrak{m}_K(q_2)\subseteq   \mathfrak{m}_K(q_1)$.
\item When  $m_1$ and $m_2$ are both reducible separable polynomials and $d_f=-\infty$, the formula predicts
an intersection of length $2\infty$, a maximal path. However, if $q_1q_2\neq q_2q_1$ we get a ray instead.
\end{itemize}
\end{theorem}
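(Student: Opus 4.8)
The plan is to reduce everything to a computation inside the Bruhat--Tits tree $\mathfrak t(L)$ of a well-chosen splitting extension $L/K$, where the branches $\mathfrak s_K(q_i)$ pull back to thick lines (or foliages) whose stems are explicit maximal paths $\mathfrak p(a_i,b_i')_L$, and then read off the relative position of those two paths from the relations \eqref{133}. First I would diagonalize the problem: for each $i$, the matrix $q_i$ satisfies $m_i(q_i)=0$, so $q_i$ generates a commutative subring $K[q_i]\cong K[X]/(m_i)$. When $m_i$ is separable this is a quadratic étale algebra — either split (giving a thick line whose ends are the two eigenvectors), or an unramified or ramified field extension — while when $m_i$ is inseparable it is $K(\sqrt{b_i})$ (up to the harmless translation $q_i\mapsto q_i+a_i/?$, which in characteristic two we cannot do, so one works directly with $X^2+a_iX+b_i$ and its Artin--Schreier normalization $p_{b_i/a_i^2}$ when $a_i\neq 0$). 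The point of Theorem \ref{t21} and its quadratic-defect analog is precisely that the invariant $I_i$, hence $t_i=\lfloor(\nu(I_i)-1)/2\rfloor$, records the ramification data of $K[q_i]$ and therefore, via \cite{a-cb1,osbotbtt}, the depth $l(q_i)$ and the location of the stem $\mathfrak m_K(q_i)$ inside $\mathfrak t(K)\subseteq\mathfrak t(L)$. So Step 1 is: identify $\mathfrak m_K(q_i)$ as an explicit (possibly ghost) path in $\mathfrak t(L)$ with ends the fixed points of $q_i$ acting by Moebius transformations on $\mathbb P^1(L)$.

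Step 2 is the core computation: express the ``fake distance'' between $\mathfrak m_K(q_1)$ and $\mathfrak m_K(q_2)$ in terms of the cross-ratio of the four fixed points. The discriminant-type quantity $\Delta=\lambda^2+a_1a_2\lambda+a_1^2b_2+a_2^2b_1$ is, up to scaling, the resultant-like expression measuring how the two pairs of fixed points interleave; concretely $\Delta$ should come out as (a power of a unit times) the product of the four differences $(\alpha_i-\beta_j)$ of eigenvalues of $q_1,q_2$. Translating valuations of such differences into tree distance — using the normalization $\delta(v,v')=\frac1{e(L/K)}l(\mathfrak p[v,v']_L)$ and the standard dictionary between $\nu$ of a cross-ratio and the combinatorics of how two paths in a tree meet — yields $d_f$ as half of $\nu(\Delta)$ corrected by the $\nu(I_i)$, matching the case table. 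Then Lemma \ref{lem11} (and its corollaries Lemmas \ref{lem12}, \ref{lem13}) guarantees that the intersection of the two thick neighborhoods is itself defined over $K$, and a tubular-neighborhood intersection lemma gives: if the stems are far apart ($d_f>0$) the two thick lines are disjoint and $d_f$ is literally the stem distance, whereas if they overlap the intersection is a thick line whose stem has the claimed length $\min\{-2d_f,l(q_1),l(q_2)\}$ — the two $l(q_i)$ truncations appearing because a thick line cannot be longer than either factor.

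Step 3 is the exceptional-case analysis. When both $m_i$ are reducible inseparable, $K[q_i]$ is a nilpotent-type ring (its radical is nontrivial), so $\mathfrak s_K(q_i)$ is a foliage or a degenerate line; here $\Delta$ degenerates and the honest invariant is $\nu(\lambda)$ directly, giving depth $\lfloor\nu(\lambda)/2\rfloor$ with the parity of $\nu(\lambda)$ deciding vertex-versus-edge stem, exactly as one gets by intersecting two horoballs/foliages sharing structure; and $\lambda=0$ forces $q_1,q_2$ to lie in a common split subalgebra, hence nested stems. The second exception, both $m_i$ reducible separable with $d_f=-\infty$, is the subtlest: naively the two thick lines share a full maximal path, but $\mathfrak s_K(q_i)$ for a split $q_i$ is a thick line only if $q_i$ is not nilpotent-like; if additionally $q_1q_2\neq q_2q_1$ the two commuting subalgebras $K[q_1],K[q_2]$ cannot be simultaneously diagonalized, so their branches, while sharing an end, peel apart into a ray rather than a bi-infinite path. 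The main obstacle I expect is this last bookkeeping — keeping track of when a predicted ``length $2\infty$'' intersection genuinely is a maximal path versus collapsing to a ray — together with the uniform treatment of the characteristic-two normalizations (no $\tfrac12$, Artin--Schreier in place of Kummer), which is why the $I_i$ are defined by cases and why Theorem \ref{t21} had to be proved first.
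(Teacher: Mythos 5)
Your overall strategy --- pass to a splitting field $L$, identify each stem as an explicit (possibly ghost) path or foliage whose ends are the fixed points of $q_i$, translate the algebraic invariant into tree geometry via the defects $I_i$, and treat the reducible exceptional cases separately --- is essentially the route the paper takes in \S\S4--7. The genuine problem is in your Step 2, which is the core of the argument. You assert that $\Delta$ is, up to a unit, the product of the four differences of eigenvalues of $q_1$ and $q_2$. That product is the resultant of $m_1$ and $m_2$: it depends only on the two minimal polynomials and not on $\lambda$, whereas $\Delta=\lambda^2+a_1a_2\lambda+a_1^2b_2+a_2^2b_1$ does depend on $\lambda$, and it must, since the fake distance has to detect the relative position of the two stems, which varies freely for fixed $m_1,m_2$. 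With your identification, $d_f$ would be blind to exactly the information it is supposed to measure, so the subsequent ``valuation of cross-ratio equals overlap'' dictionary has nothing correct to feed on. What makes the computation work (and what the paper does in \S4.1) is the conjugation invariance of $\lambda$: normalize the pair $(q_1,q_2)$ by a Moebius transformation so that the ends of the stems sit at standard points ($0,\infty$ and $1,\theta$ in the split separable case), write $q_1,q_2$ explicitly in those coordinates, and compute $\lambda$ and $\Delta$ there (Table \ref{tabla 2}); one finds $\Delta/(a_1^2a_2^2)=\theta/(1+\theta)^2$, i.e. a cross-ratio-type invariant of the four ends intertwined with $\lambda$, not the resultant, and it is the valuation of this quantity that is read off as distance or overlap as in Fig.~\ref{figure 4}.

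A second, smaller gap: in the irreducible (ramified or inseparable) cases the branch over $L$ is an infinite foliage with a single end, so there is no second end and no cross-ratio to use. The paper instead replaces $\mathfrak{s}_L(q_i)$ by a fake branch $\mathfrak{s}_L\big(\pi^{-t_i-1}(q_i-\alpha_i)\big)$ whose leaves lie at a controlled (half-integer) distance from the $K$-stem, applies the split computation to the rescaled elements, and then uses Lemmas \ref{lem11} and \ref{lem13} to decide which vertices of the connecting path are defined over $K$ --- this is what produces the signs $\pm t_i$ in Table \ref{table 1} and what excludes values like $d_f=-\tfrac12$ when one polynomial is unramified and the other ramified. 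Your plan invokes the rationality lemmas but not this rescaling device, and without it the reduction of the irreducible and mixed cases to the split case does not go through as stated. Your handling of the two exceptional bullet points, by contrast, is in line with the paper's: intersection of two foliages governed by $\nu(\lambda)$ with depth $\lfloor\nu(\lambda)/2\rfloor$, and commutativity of $q_1,q_2$ deciding maximal path versus ray.
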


\begin{table}
\[
\begin{tabular}{|c|c|c|}
\hline $Y_1$ & $Y_2$ & $d_f$\\
\hline\hline
$A^s$& $A^s$ & $-\frac{1}{2}\nu\left( \frac{\Delta}{a_1^2a_2^2}\right) $\\ \hline
$A^s$& $A^i$ & $-\frac{1}{2}\nu\left( \frac{\Delta}{a_1^2}\right)$\\ \hline
$A^i$& $A^s$ & $-\frac{1}{2}\nu\left( \frac{\Delta}{a_2^2}\right)$\\ \hline
$A^i$& $A^i$ &$-\frac{1}{2}\nu\left( \Delta\right)$\\ \hline
$A^s$& $B^s$ & $-\frac{1}{2}\nu\left( \frac{\Delta}{a_1^2a_2^2}\right) -t_2$\\ \hline
$A^s$& $B^i$ & $-\frac{1}{2}\nu\left( \frac{\Delta}{a_1^2}\right)+t_2$\\ \hline
$A^i$& $B^s$ & $-\frac{1}{2}\nu\left( \frac{\Delta}{a_2^2}\right)-t_2$\\ \hline
$A^i$& $B^i$ &$-\frac{1}{2}\nu\left( \Delta\right)+t_2$\\ \hline
\end{tabular}
\qquad
\begin{tabular}{|c|c|c|}
\hline $Y_1$ & $Y_2$ & $d_f$\\
\hline\hline
$B^s$& $A^s$ & $-\frac{1}{2}\nu\left( \frac{\Delta}{a_1^2a_2^2}\right) -t_1$\\ \hline
$B^s$& $A^i$ & $-\frac{1}{2}\nu\left( \frac{\Delta}{a_1^2}\right)-t_1$\\ \hline
$B^i$ & $A^s$ & $-\frac{1}{2}\nu\left( \frac{\Delta}{a_2^2}\right)+t_1$\\ \hline
$B^i$& $A^i$ &$-\frac{1}{2}\nu\left( \Delta\right)+t_1$\\ \hline
$B^s$& $B^s$ & $-\frac{1}{2}\nu\left( \frac{\Delta}{a_1^2a_2^2}\right)-t_1-t_2 $\\ \hline
$B^s$& $B^i$ & $-\frac{1}{2}\nu\left( \frac{\Delta}{a_1^2}\right)-t_1+t_2 $\\ \hline
$B^i$& $B^s$ & $-\frac{1}{2}\nu\left( \frac{\Delta}{a_2^2}\right)+t_1-t_2 $\\ \hline
$B^i$& $B^i$ &$-\frac{1}{2}\nu\left( \Delta\right)+t_1+t_2$\\ \hline
\end{tabular}
\]
\caption{The value of $d_f$ for $m_1\in Y_1$ and $m_2\in Y_2$.}\label{table 1}
\end{table}

A natural question that arises at this point is whether there exist matrices satisfying the hypotheses of
the preceding theorem. This can be answered as follows:

\begin{theorem}\label{t24}
Let $\lambda\in K$, $m_1$, $m_2$ and $\Delta$ be as in the Theorem \ref{t22}. We define,
for $x,y,z,w\in K$, with $y,w\neq0$, the expression
\small
\begin{equation}\label{eq2}
C(x,y,z,w) = \frac{1}{yw} \left[   y^2 m_1\left( \frac{x}{y}\right) + w^2m_2\left( \frac{z}{w}\right) + a_1 z y + a_2 x w \right].
\end{equation}
\normalsize
Then, there exist  linearly independent elements $q_1,q_2 \in \mathbb{M}_2(K)\backslash K$ satisfying
the identities in Equation \eqref{133} if and only if any of the following conditions holds:
\begin{enumerate}
\item[i.-] $\Delta\neq 0$, and at least one polynomial, $m_1$ or $m_2$, has a zero in $K$.
\item[ii.-] $\Delta\neq 0$, and there exist two pairs $(x,y)$ and $(z,w)$ in $K \times K^{*}$ satisfying $C(x,y,z,w)=\lambda$.
\item[iii.-] $\Delta=0,$ $a_1 \neq 0$ and $m_1$ has a zero in $K$.
\item[iv.-] $\Delta=0,$ $a_2 \neq 0$ and $m_2$ has a zero in $K$.
\item[v.-] $\Delta=0$, $a_1=a_2=0$.
\end{enumerate}
In the last case, however, the matrices $q_1$ and $q_2$ are contained in a two dimensional subalgebra.
\end{theorem}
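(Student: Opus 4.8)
The plan is to reduce the existence question to a concrete linear-algebra problem in $\matrici_2(K)$, and then to solve that problem by choosing good representatives. First I would observe that, since $q_1$ (resp. $q_2$) is a non-scalar matrix annihilated by the quadratic polynomial $m_i$, it is (up to $K$-conjugation) determined by $m_i$: if $m_i$ is irreducible, $q_i$ is the image of a generator of the field $K[X]/(m_i)$ acting on $K^2$; if $m_i$ is reducible, $q_i$ is conjugate to $\diag(r,r')$ where $r,r'$ are the roots (equal in the inseparable case, giving a non-diagonalizable Jordan block when $q_i\notin K$). The only remaining freedom, once we fix $q_1$ in a normal form, is the choice of $q_2$ among all non-scalar matrices with $m_2(q_2)=0$; equivalently, the choice of a conjugating element $g\in\mathrm{GL}_2(K)$, or more efficiently the choice of the pair of lines (the eigen-configuration of $q_2$) relative to that of $q_1$. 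Then the single scalar equation $q_1\overline{q_2}+q_2\overline{q_1}=\lambda$ (a scalar by \eqref{conj2}) becomes one equation in the parameters describing $q_2$. I would write $q_1=\bmattrix{a}{b}{c}{d}$ and $q_2=\bmattrix{a'}{b'}{c'}{d'}$ with $a+d=a_1$, $a'+d'=a_2$, $ad-bc=b_1$, $a'd'-b'c'=b_2$, so that \eqref{conj2} gives the constraint $ad'+bc'+cb'+da'=\lambda$; this is exactly the shape that the function $C(x,y,z,w)$ in \eqref{eq2} encodes once one parametrizes the eigenvectors of $q_1$ and $q_2$ by homogeneous coordinates $(x,y)$ and $(z,w)$ on $\mathbb{P}^1$.

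Next I would do the bookkeeping that produces $\Delta$. A direct computation shows that the quantity $\Delta=\lambda^2+a_1a_2\lambda+a_1^2b_2+a_2^2b_1$ is, up to the obvious square factors, the discriminant of the ternary (or the relevant binary) form attached to the triple $(m_1,m_2,\lambda)$; concretely, $\Delta$ vanishes precisely when the quaternion-like algebra $K[q_1,q_2]$ generated subject to \eqref{133} degenerates, i.e. is not the full matrix algebra but a commutative or otherwise degenerate algebra. So the dichotomy $\Delta\neq 0$ versus $\Delta=0$ in the statement is forced. For $\Delta\neq 0$: if, say, $m_1$ has a root $r\in K$, then $q_1$ has a rational eigenvector, and after conjugating we may take $q_1$ upper triangular; the equation for $q_2$ then becomes solvable directly (this is case (i)), and the remaining subcase where neither $m_i$ splits is exactly the assertion that the affine conic $C(x,y,z,w)=\lambda$ has a $K$-point with $y,w\neq 0$, which is case (ii). For $\Delta=0$ one has to identify which commutative or degenerate algebra occurs: if some $a_i\neq 0$ and the corresponding $m_i$ splits one lands in (iii)/(iv) by an explicit pair of triangular matrices sharing a flag, while if $a_1=a_2=0$ the equations \eqref{133} force $q_1,q_2$ to lie in the (two-dimensional, hence commutative) subalgebra generated by a single nilpotent-type element, which is the final clause of the theorem; here I would check that one can still make $q_1,q_2$ linearly independent over $K$ inside that subalgebra, so condition (v) really is sufficient.

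To organize the argument cleanly I would treat the ``only if'' direction first as a list of obstructions: each of the excluded cases ($\Delta\neq 0$ but no $m_i$ splits and no point on the conic; or $\Delta=0$ with the wrong configuration of $a_i$) is shown to force $q_1,q_2$ either to be scalar or to fail \eqref{133}, by running the normal-form reduction of the previous paragraph in reverse. The ``if'' direction is then a matter of exhibiting explicit matrices in each of the five cases, which is routine once the normal form is set up; the formula \eqref{eq2} is tailor-made so that a solution $(x,y,z,w)$ of $C=\lambda$ plugs directly into a pair of $2\times2$ matrices. The main obstacle, and the part that needs genuine care rather than bookkeeping, is the analysis when $\Delta=0$: there the algebra $K[q_1,q_2]$ is not a division algebra and not $\matrici_2(K)$, so the ``up to conjugacy $q_i$ is determined by $m_i$'' principle must be supplemented by a careful case split on whether the shared eigenstructure is a line, a point of $\mathbb{P}^1$, or a nilpotent flag, and one must verify in the $a_1=a_2=0$ subcase both that a solution exists and that the resulting pair genuinely spans a $2$-dimensional subalgebra (no more, no less). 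I expect the separable-versus-inseparable distinction for the $m_i$ to interact with this $\Delta=0$ analysis and to be where most of the real work lies.
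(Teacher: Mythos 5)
The central gap is in case (ii), which is the real content of the theorem when $\Delta\neq 0$ and neither $m_i$ has a root in $K$. You assert that, after fixing a normal form for $q_1$, the scalar constraint $ad'+bc'+cb'+da'=\lambda$ ``is exactly the shape that $C(x,y,z,w)$ encodes once one parametrizes the eigenvectors of $q_1$ and $q_2$ by homogeneous coordinates'', but this identification is never carried out, and it misreads what $C$ measures. In the paper's argument the quadruple $(x,y,z,w)$ does not describe an eigen-configuration at all: it parametrizes an element $q=(x-z)+yq_1+wq_2$ of the span of $1,q_1,q_2$, and one checks that $q\overline{q}=yw\big(C(x,y,z,w)+\lambda\big)$, so solvability of $C=\lambda$ with $y,w\neq0$ is equivalent (when neither $m_i$ splits) to the existence of a nonzero zero divisor. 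The bridge to the existence of the pair $(q_1,q_2)$ is then representation-theoretic: the abstract algebra $\mathcal{A}(\lambda,m_1,m_2)$ of \eqref{eq algebra} is $4$-dimensional and simple exactly when $\Delta\neq0$ (Lemma \ref{lema 6.1}), a pair satisfying \eqref{133} is the same as a $2$-dimensional representation of $\mathcal{A}$, and for a simple $4$-dimensional algebra such a representation forces $\mathcal{A}\cong\mathbb{M}_2(K)$, which happens precisely under (i) or (ii) (Proposition \ref{t23}). Your sketch proves neither half of this chain; both directions of (ii), and also the necessity direction in (iii)/(iv) (why, when $\Delta=0$ and $a_1\neq0$, the polynomial $m_1$ itself must split --- in the paper this comes from normalizing $q_2''=a_2q_1+a_1q_2$ and reading off $m_1(u)=0$), are deferred to ``running the normal-form reduction in reverse'', a reduction that was never established. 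Likewise, the claim that $\Delta=0$ is exactly the degeneration locus of the algebra is itself one of the two key lemmas (proved in the paper via explicit cyclic-algebra presentations and, for $\Delta=0$, a representation with nontrivial radical), not a remark one can wave through as ``a direct computation''.

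There is a second, independent gap in case (v). With $a_1=a_2=0$ one has $\lambda=0$, and the relations \eqref{133} say that $q_1,q_2$ commute with $q_1^2=b_1$, $q_2^2=b_2$; since $q_1$ is non-scalar, $q_2$ must lie in the $2$-dimensional algebra $K[q_1]$, whose squares are exactly $K^2+b_1K^2$. Sufficiency of (v) therefore needs $b_2\in K^2+b_1K^2$, i.e.\ $\big[K(\sqrt{b_1},\sqrt{b_2}):K\big]\leq2$, and this holds only because the residue field is perfect, so that $[K:K^2]=2$ --- precisely the point the paper isolates in the remark following the theorem (for an imperfect residue field, condition (v) alone would be insufficient and $\mathcal{A}$ could be a quartic purely inseparable field with no $2$-dimensional representation). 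Your plan to ``check that one can still make $q_1,q_2$ linearly independent inside that subalgebra'' addresses a minor point while skipping this genuine obstruction, which is where the hypothesis on $K$ actually enters the proof.
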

If $K$ were not a perfect field, we would need an extra condition $$\Big[K(\sqrt{b_1},\sqrt{ b_2}):K\Big]\leq2$$ in
item (v) above (see \S8).
To prove this theorem, we need to study the semisimplicity of certain $K$-algebras. This is essentially
a generalization of \cite[Ch. II, Th. 1.3]{vigneras} that plays the role of the Hilbert symbol in even characteristic.

\section{Artin-Schreier and quadratic defects}\label{s3}

Let $a \in K$, and let $p_a(X)=p_0(X)+a$ be the Artin-Schreier polynomial (c.f. \S\ref{s2}). 
We write $P_0=p_0(K)$ for the image of $p_0$,  which is an additive subgroup of $K$. Note that 
$b\mapsto p_a(b)$ is a continuous function on the locally compact space 
$K$ satisfying $\lim_{b\rightarrow\infty} p_a(b)=\infty$. Next result is straightforward from (\ref{asprop}) and 
the observation that $\mathbb{D}(a)\subseteq\Big(p_a(h)\Big)$ for any $h\in K$:
\begin{lemma}\label{lema1}
We have $\mathbb{D}(a+b)\subseteq \mathbb{D}(a)+\mathbb{D}(b)$, for any pair of elements $a,b\in K$. In particular, $\mathbb{D}(a) = \mathbb{D}(a+c)$ for any $c\in P_0$.
Also, for any element $a\in K$, there exists $h=h(a)$ satisfying $\mathbb{D}(a)=\big(p_a(h)\big)$.
\end{lemma}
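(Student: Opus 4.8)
The plan is to prove Lemma~\ref{lema1} in three parts, corresponding to its three sentences, each of which follows directly from the additivity property~\eqref{asprop} together with the ultrametric inequality and the elementary geometry of balls in $K$.

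\emph{First part: $\mathbb{D}(a+b)\subseteq\mathbb{D}(a)+\mathbb{D}(b)$.} Given any $\varepsilon>0$, by definition of $\mathbb{D}(a)$ and $\mathbb{D}(b)$ as intersections we may choose $h_1,h_2\in K$ with $\nu\big(p_a(h_1)\big)$ and $\nu\big(p_b(h_2)\big)$ as large as we please; more precisely, since each defect is a fractional ideal, it suffices to observe that for every $h_1,h_2$ we have, by \eqref{asprop}, $p_{a+b}(h_1+h_2)=p_a(h_1)+p_b(h_2)$, so $\big(p_{a+b}(h_1+h_2)\big)\subseteq(p_a(h_1))+(p_b(h_2))$. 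Taking $h_1=h(a)$ and $h_2=h(b)$ realizing the respective defects (as produced in the third part below, or simply taking the intersection over all $h_1,h_2$), we get $\mathbb{D}(a+b)\subseteq\big(p_{a+b}(h_1+h_2)\big)\subseteq\mathbb{D}(a)+\mathbb{D}(b)$. The specialization $b=c\in P_0$ gives $\mathbb{D}(c)=\{0\}$ since $p_c(h)=0$ has a solution, hence $\mathbb{D}(a+c)\subseteq\mathbb{D}(a)$; applying the same inclusion with $a$ replaced by $a+c$ and $c$ by $-c=c$ (characteristic two) yields the reverse inclusion, so $\mathbb{D}(a)=\mathbb{D}(a+c)$.

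\emph{Third part: the infimum is attained.} This is the only part with genuine content. I would argue that the function $h\mapsto\nu\big(p_a(h)\big)$ on $K$, while not bounded above in general, is bounded above \emph{on the set where it matters}, and attains its supremum there. Concretely: if $a\in P_0$ we are done with $\mathbb{D}(a)=\{0\}$. Otherwise, note that $p_a(h)=h^2+h+a$ and for $\nu(h)<0$ we have $\nu(p_a(h))=2\nu(h)<0$, which is bounded above by $-1$; for $\nu(h)\ge 0$, i.e. $h\in\oink$, we have $p_a(h)\in a\oink+\oink$, and the key point is that the continuous map $p_a$ on the \emph{compact} set $\oink$ takes values in a compact set, so $\nu\circ p_a$ restricted to $\oink$ is bounded above, say by $N$, and \emph{attains} its maximum because $\oink$ is compact and $\nu\circ p_a$ is upper semicontinuous (indeed locally constant away from its zero set, and $p_a$ has no zero in $K$ here). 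Combining the two regimes, $\sup_h\nu(p_a(h))$ is finite and attained at some $h=h(a)\in\oink$, giving $\mathbb{D}(a)=\bigcap_h(p_a(h))=(p_a(h(a)))$.

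The main obstacle is the compactness/attainment argument in the third part: one must be careful that $p_a$ has no root in $K$ (otherwise the supremum is $+\infty$, which is the case $\mathbb{D}(a)=\{0\}$ handled separately), and that outside $\oink$ the valuation $2\nu(h)$ genuinely goes to $-\infty$, so no supremum can be chased off to infinity. Once this is set up, everything else is a direct consequence of \eqref{asprop} and the ultrametric inequality $\nu(x+y)\ge\min\{\nu(x),\nu(y)\}$, with equality when the valuations differ — the latter being what makes the two regimes $\nu(h)<0$ and $\nu(h)\ge0$ separate cleanly.
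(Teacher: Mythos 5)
Your parts (1) and (2) are correct, and your overall strategy for the attainment statement --- properness of $p_a$ at infinity plus compactness, with the case $a\in P_0$ (i.e.\ $p_a$ having a root in $K$) set aside --- is exactly the route the paper intends: just before the lemma it records that $b\mapsto p_a(b)$ is continuous on the locally compact space $K$ with $\lim_{b\to\infty}p_a(b)=\infty$, and then calls the lemma straightforward. The problem is your two-regime split at $\nu(h)=0$. The claim ``for $\nu(h)<0$ we have $\nu(p_a(h))=2\nu(h)$'' rests on dominance, which needs $2\nu(h)<\nu(a)$; it is valid when $a\in\mathcal{O}$, but fails when $\nu(a)<0$, which is precisely the case that produces the defects $(\pi^{-2t+1})$ in Theorem \ref{t21}. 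Concretely, over $K=\mathbb{F}_2((\pi))$ take $a=\pi^{-2}+1$ and $h=\pi^{-1}$: then $p_a(h)=\pi^{-1}+1$ has valuation $-1\neq 2\nu(h)=-2$. For this $a$ one checks $a\notin P_0$, that $\nu\big(p_a(h)\big)=-2$ for every $h\in\mathcal{O}$, that $\nu\big(p_a(h)\big)=-1$ for every $h$ of valuation $-1$, and that $\nu\big(p_a(h)\big)=2\nu(h)\leq-4$ for $\nu(h)\leq-2$; so $\mathbb{D}(a)=(\pi^{-1})$ and the supremum is attained only at points \emph{outside} $\mathcal{O}$. Hence both your bound on the region $\nu(h)<0$ and your conclusion ``attained at some $h(a)\in\mathcal{O}$'' are false, and the compactness argument as written does not cover the region where the maximizer actually lives in the cases of interest.

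The repair is short and keeps your architecture: split instead at the closed ball $B=\{h\in K: |h|\leq\max\{1,|a|^{1/2}\}\}$. For $h\notin B$ we have $|h|^2>\max\{|h|,|a|\}$, so by dominance $|p_a(h)|=|h|^2>|a|=|p_a(0)|$; such $h$ can never improve on $h=0$, whence $\sup_h\nu\big(p_a(h)\big)=\sup_{h\in B}\nu\big(p_a(h)\big)$. Now $B$ is compact, so your continuity/no-root argument applies verbatim and gives attainment at some $h(a)\in B$ (not necessarily in $\mathcal{O}$). One smaller point: in part (1) the parenthetical ``or simply taking the intersection over all $h_1,h_2$'' is not a substitute for attainment, since an intersection of sums of fractional ideals need not equal the sum of the intersections when the infima are not attained; so part (1) should be presented as resting on part (3), which is logically fine because part (3) does not use part (1).
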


\begin{example}\label{e33}
$p_{\pi}(X)$ has a zero in $K$, since the image $\overline{p}_{\pi}(X) = X(X+1) \in \mathbb{K}[X]$ has two different roots in the residue field $\mathbb{K}$, and Hensel´s Lemma applies. We conclude that $\mathbb{D}(\pi)=(0)$.
\end{example}

\subparagraph{Proof of Theorem 2.1}
Let $s \in \mathbb{Z} \cup \lbrace \infty \rbrace$, and set $\pi^{\infty} =0$. By Lemma \ref{lema1}, if $\mathbb{D}(a) = (\pi^s)$, there exist $b \in K$ satisfying $p_a(b)= u\pi^s$, for some $u \in \mathcal{O}^{*}$. In particular $p_{a+u\pi^s}(b)=0$, i.e., $\mathbb{D}(a+u\pi^s)=\lbrace 0 \rbrace$. If $s>0$, or if $s= \infty$, we conclude, reasoning as in Example \ref{e33}, that the polynomial $p_{u\pi^s}(X) = p_0(X)+u\pi^s$ has a root in $K$, in particular $\mathbb{D}(u\pi^s)=\lbrace 0 \rbrace$. It follows from Lemma \ref{lema1} that $\mathbb{D}(a) = \lbrace 0 \rbrace$. Now, assume that $s=-2t$, for $t \in \mathbb{Z}_{>0}$, and write $u=b_0+ \epsilon $, where 
$b_0 \in \mathbb{F}_{2^\tau}^{*}$ and $|\epsilon|< 1$. As 
$\mathbb{F}_{2^\tau}$ is a perfect field, we have $b_0= a_0^2$, 
for some $a_0 \in \mathbb{F}_{2^\tau}$. This implies that 
$$p_a(b+a_0\pi^{-t})=p_a(b)+p_0(a_0\pi^{-t})=\pi^{-2t}\epsilon+a_0\pi^{-t}$$ has a larger valuation than $p_a(b)$. This contradicts the fact that $\mathbb{D}(a)=\Big(p_a(b)\Big)$. Now, we assume that $s=-2t+1$, where $t>0$. Let $L$ be the decomposition field of 
$$p_{a}(X)=p_0(X+b)+p_a(b)=p_0(X+b)+u\pi^{-2t+1},$$
and let $\alpha \in L$ be one of its roots.  
Then $\pi^t(\alpha+b)$ satisfies the Einsenstein polynomial $q(X) = X^2+\pi^t X+ u\pi$. This implies that $L$ 
ramifies over $K$. On the other hand, for an arbitrary $t>0$, we let $a=\pi^{-2t+1}$. It is clear that 
$\mathbb{D}(a) \subseteq (\pi^{-2t+1})$. Suppose that equality fails to hold. In particular, 
$|p_a(b)|<|\pi^{-2t+1}|$, 
and by Dominance Principle we have $|b(b+1)|= |p_a(b)+a| = |\pi^{-2t+1}|>1$. This implies that 
$|b|=|b+1|>1$, and then $|b|^2=|p_0(b)| = |\pi^{-2t+1}|$, which is absurd. We conclude that $\mathbb{D}(a)=(\pi^{-2t+1})$. This proves that $(\pi^{-2t+1})$ is in the image of the Artin-Shreier defect for any 
positive integer $t$.
Finally, we assume that $\mathbb{D}(a)= \mathcal{O}$ and  choose $L$ and $\alpha$ as before. 
Then $\alpha+b$ satisfy the polynomial $p_u(X)=X^2+X+u$. Note that $\overline{p_u}(X)$ must be irreducible in $\mathbb{K}[X]$ by Hensel's lemma. We conclude that $L$ is an unramified extension of $K$.
\qed

Note that $\mathbb{D}(a) = \mathcal{O}$ is the smallest possible non trivial Artin-Schreier defect, 
and it is attained when the roots of $p_a(X)$ generate an unramified quadratic extension. 
These elements $a$ play an analog role to that of units of minimal quadratic defect for a local field $K$ 
with odd characteristic.

Let $\delta(a) = \bigcap_{h \in K} (h^2+a)$ be the quadratic defect of $a$ (c.f. \S2).
We write $a = \sum_{i=-2N}^{\infty} a_i \pi^{i}$, where $a_i \in \mathbb{F}_{2^\tau}$. Set $a_i=b_i^2$, wich 
can always be done, since $\mathbb{F}_{2^\tau}$ is perfect. If $\xi= \sum_{i=-N}^{\infty} b_{2i}\pi^i$, 
then $a+\xi^2$ has a nonzero coefficient only for odd exponents $\pi^i$. In particular, either $a$ is a square of $K$, 
or otherwise $|a+\xi^2|=|\pi|^{2t+1}$, for some $t \in \mathbb{Z}$. Note that this absolute value is optimal 
by the Dominance Principle, since squares cannot have an odd valuation. Next result follows:

\begin{proposition}\label{prop 3.4}
The image of the quadratic defect $\delta$ is exactly the set $\lbrace (\pi^{2t+1}) : t \in \mathbb{Z} \rbrace$, and we have $|\delta(a)|\leq |a|$, for any element $a \in K$. Furthermore, there exists $b\in K$ satisfying $|\delta(a)|=|\delta(a+b^2)|=|a+b^2|$.
\end{proposition}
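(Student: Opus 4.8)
The plan is to prove Proposition \ref{prop 3.4} by making the normal-form argument in the paragraph preceding it completely explicit and then reading off each of the three assertions.

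\textbf{Step 1: The image is contained in $\{(\pi^{2t+1}):t\in\Z\}\cup\{(0)\}$, and in fact equals $\{(\pi^{2t+1}):t\in\Z\}$ once we agree that a square is detected by $\delta(a)=(0)$.} Given $a=\sum_{i\ge -2N}a_i\pi^i$ with $a_i\in\mathbb{F}_{2^\tau}$, write $a_i=b_i^2$ using perfectness of the residue field, and set $\xi=\sum_{i\ge -N}b_{2i}\pi^i$. Squaring in characteristic two kills all cross terms, so $\xi^2=\sum_{i\ge -2N}b_{2i}^2\pi^{2i}=\sum_{j\text{ even}}a_j\pi^j$; hence $a+\xi^2=\sum_{j\text{ odd}}a_j\pi^j$ has support in odd exponents only. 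If all those coefficients vanish then $a=\xi^2$ is a square and $\delta(a)=(0)$; otherwise $|a+\xi^2|=|\pi|^{2t+1}$, where $2t+1$ is the least odd exponent with $a_{2t+1}\ne0$. This shows $\delta(a)\subseteq(\pi^{2t+1})$. For the reverse inclusion one invokes the Dominance Principle exactly as in the proof of Theorem \ref{t21}: for any $h\in K$ we have $h^2+a=(h+\xi)^2+(a+\xi^2)$, and since $(h+\xi)^2$ has even valuation while $a+\xi^2$ has odd valuation $2t+1$, the valuations cannot cancel, so $|h^2+a|\ge|\pi|^{2t+1}$ with equality for $h=\xi$. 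Therefore $\delta(a)=(\pi^{2t+1})=(a+\xi^2)$ precisely. That every $(\pi^{2t+1})$ is attained is clear by taking $a=\pi^{2t+1}$ itself.

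\textbf{Step 2: The bound $|\delta(a)|\le|a|$.} Taking $h=0$ in the intersection defining $\delta$ gives $\delta(a)\subseteq(a)$, hence $|\delta(a)|\le|a|$ immediately. (Strict inequality happens exactly when the leading exponent $-2N$ of $a$ is even and nonzero, i.e.\ when subtracting $\xi^2$ improves the valuation.)

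\textbf{Step 3: The optimal representative.} Take $b=\xi$ from Step 1. Then $a+b^2=a+\xi^2$ has odd valuation $2t+1$ (or is $0$), and by Step 1 we have $\delta(a+b^2)=\delta(a)$ — indeed $\delta$ is invariant under adding squares since the defining intersection over $h$ is unchanged by the substitution $h\mapsto h+b$ — and $|\delta(a)|=|a+\xi^2|=|\pi|^{2t+1}$. Hence $|\delta(a)|=|\delta(a+b^2)|=|a+b^2|$, as claimed.

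\textbf{Main obstacle.} There is no deep obstacle here; the content is entirely in Step 1, and the only point requiring care is the bookkeeping that squaring $\xi$ reproduces exactly the even-exponent part of $a$ (this uses characteristic two in the Frobenius being additive and the residue field being perfect so that every $a_i$ has a square root), together with the clean application of the Dominance Principle to rule out cancellation between an even-valuation square and the odd-valuation remainder. I would also note explicitly that the case where $a$ is itself a square is the degenerate case $\delta(a)=(0)$, which is consistent with the statement once one reads $(\pi^{\infty})=(0)$; the proposition as stated lists only $(\pi^{2t+1})$ with $t\in\Z$ finite, so I would either restrict to non-square $a$ or add the remark that squares give the trivial defect, matching the convention already used for $\mathbb{D}$ in Theorem \ref{t21}.
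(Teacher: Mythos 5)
Your proof is correct and is essentially the paper's own argument (the paragraph preceding the proposition) made explicit: the same Laurent-series normal form using perfectness of the residue field, the same identity $h^2+a=(h+\xi)^2+(a+\xi^2)$ with the Dominance Principle ruling out cancellation between even- and odd-valuation terms, and the trivial choice $h=0$ for the bound $|\delta(a)|\leq|a|$. Your closing observation that squares give the degenerate value $\delta(a)=(0)$, not listed in the stated image, is a fair reading of the statement and matches the convention already used for $\mathbb{D}$ in Theorem \ref{t21}.
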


\section{Branches and stems in characteristic 2}\label{s4}

Let $q \in \mathbb{M}_2(K)$ be a matrix that is integral over $\mathcal{O}$, i.e., its irreducible polynomial has 
integral coefficients. We say that $q$ is separable if $m_q(X)=\text{irr}_{q,K}(X)=X^2+cX+d \in \mathcal{O}[X]$ 
is separable, i.e: $c \neq 0$.  Otherwise, we say that $q$ is inseparable.
Suppose that $m_q(X)$ is separable and splits over $K$. Let $\alpha$ be a root of $m_q(X)$. Note that $\alpha+c$ is the other root. Note also that $w=\frac{q+\alpha}{c}\in\matrici_2(K)$ is an idempotent. Then $\mathfrak{s}_K(q)=\mathfrak{s}_K(w)^{[\nu(c)]}$, and $\mathfrak{s}_K(w)$ is a path in $\mathfrak{t}(K)$ (c.f. \cite[\S 4]{a-cb1}). This implies that $\mathfrak{s}_K(w)$ is the stem of $\mathfrak{s}_K(q)$.

\begin{figure}
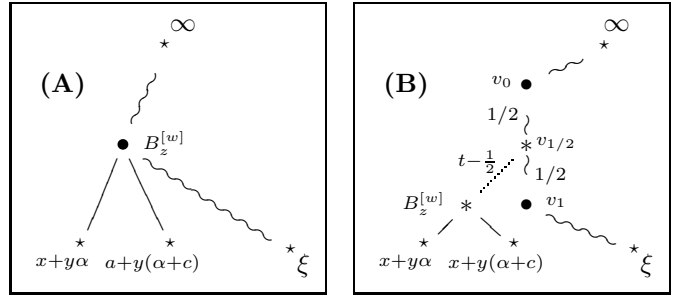

\[ 
\fbox{ \xygraph{
!{<0cm,0cm>;<.8cm,0cm>:<0cm,.8cm>::} 
 !{(0.5,0) }*+{{}^{x+y\alpha}}="e2" !{(2.5,4) }*+{\infty}="e3"  !{(0.8,0.3) }*+{{}^{\star}}="f2" !{(2.2,3.6) }*+{{}^{\star}}="f3" 
!{(2,0) }*+{{}^{a+y(\alpha+c)}}="e4" !{(4.5,0) }*+{\xi}="e5" !{(2.3,0.3) }*+{{}^{\star}}="f4" !{(4.3,0.2) }*+{{}^{\star}}="f5"
  !{(1.5,2) }*+{\bullet}="t3"   !{(1.5,2) }*+{\bullet}="t4" 
!{(2.2,2) }*+{{}^{B_z^{[w]}}}="ti3"
!{(0.5,3) }*+{\textbf{(A)}}="ti99"
   "f2"-"t4" "t3"-@{.}"t4" "f3"-@{~}"t3"
"t4"-"f4" "t3"-@{~}"f5"
 } }
\quad
\fbox{ \xygraph{
!{<0cm,0cm>;<.8cm,0cm>:<0cm,.8cm>::} 
 !{(0.5,0) }*+{{}^{x+y\alpha}}="e2" !{(4,4) }*+{\infty}="e3"
!{(2,0) }*+{{}^{x+y(\alpha+c)}}="e4" !{(4.5,0) }*+{\xi}="e5"
  !{(2.5,2) }*+{*}="t3"    !{(2.5,3) }*+{\bullet}="t6"    !{(2.5,1) }*+{\bullet}="t7" 
  !{(1.5,1) }*+{*}="t4"  !{(1.7,1.7) }*+{{}^{t-\frac{1}{2}}}="t5" 
!{(3,2) }*+{{}^{v_{1/2}}}="ti3" !{(2.1,3.0) }*+{{}^{v_0}}="ti3" !{(3,1.) }*+{{}^{v_1}}="ti3"
!{(0.8,1) }*+{{}^{B_z^{[w]}}}="ti4"
!{(0.5,3) }*+{\textbf{(B)}}="ti99"
!{(3.8,3.6) }*+{{}^{\star}}="f3" !{(2.3,0.3) }*+{{}^{\star}}="f4" !{(4.3,0.2) }*+{{}^{\star}}="f5"  !{(0.8,0.3) }*+{{}^{\star}}="f2"
   "f2"-"t4" "t3"-@{.}"t4" "f3"-@{~}"t6"
"t4"-"f4" "t7"-@{~}"f5" "t3"-@{~}^{1/2}"t7" "t3"-@{~}^{1/2}"t6"}}
\]
\caption{The $K$-vine of a branch. Here  $w=\nu_L(yc)$, in either case, and 
$v_{1/2}=B_\xi^{\big[w-t+\frac{1}{2}\big]}$ in (B).  Distances are normalized.}\label{figure 2} 
\end{figure}

We assume next that $m_q(X)\in K[X]$ is separable and irreducible. Let $L=K(\alpha)$ be the splitting field, where
$\alpha$ is a root of $m_q$. Define $w\in\matrici_2(L)$ as above.
Since $\text{Gal}(L/K)$ permutes transitively the ends of the path $\mathfrak{s}_L(w)$ (c.f. \cite[\S 5]{a-cb1}), it follows that  these ends are $x+y\alpha$ and $x+y(\alpha+c)$, for some $x,y \in K$. 
Formula (4.1) in \cite{a-cb1} tell us that the two idempotents satisfying 
$\mathfrak{s}_L(\tau)=\mathfrak{p}(a,b)_L$ are $\tau=\frac1{b-a}\sbmattrix{b}{-ab}{1}{-a}$ 
and $1-\tau$ (in any characteristic).  From here, a straightforward computation show that
the two possible choices for $q$ are $q=\beta(x,y,c,d)=\textnormal{\scriptsize{$ \frac{1}{y}
\left( \begin{array}{cc}x& x^2+cxy+dy^2  \\1 & x+yc \\ \end{array} \right)$\normalsize}}$ and
$q=\beta(x,y,c,d)+c$.  We define $\mathfrak{v}(\xi)=\mathfrak{p}(\xi, \infty)$, 
for every $\xi\in K$, and call it the $K$-vine of $\xi$.
The nearest $K$-vine $\mathfrak{v}(\xi)$ from $\mathfrak{s}_L(w)$, which we call the $K$-vine of $w$,
minimizes the expression $|x+\xi+y\alpha|=|x+\xi+y(\alpha+c)|$. 
See Fig. \ref{figure 2}\textbf{(A)} and Fig. \ref{figure 2}\textbf{(B)}.
In other words, $\xi \in K$ satisfies the identity 
$$ |x+\xi+y\alpha|^2= |x+\xi+y\alpha| |x+\xi+y(\alpha+c)|$$
$$= |yc|^2  \left| \left( \frac{x+\xi}{cy}\right)^2 + 
\left( \frac{x+\xi}{cy} \right) + \frac{d}{c^2} \right| 
= |yc|^2 \left| \mathbb{D} \left( \frac{d}{c^2} \right) \right| .$$ 
If $p_{\frac{d}{c^2}}$ is unramified, we have $|x+\xi+y\alpha|=|yc|$,
 by Theorem \ref{t21}. We conclude that the depth of 
$\mathfrak{s}_K(q)$ is $\nu(c)$, and its stem is the highest vertex in 
$\mathfrak{s}_L(w)$, as in Fig. \ref{figure 2}\textbf{(A)}. On the other hand, 
if $p_{\frac{d}{c^2}}$ is ramified, we have $|x+\xi+y\alpha|=
|yc|\cdot|\pi|^{-t+\frac12}$, where $\mathbb{D} \left( \frac{d}{c^2} \right)
=(\pi^{-2t+1})$ and $t>0$. Then, the stem of $\mathfrak{s}_K(q)$ is the line $\mathfrak{p}[v_0,v_1]$ in Fig. \ref{figure 2}\textbf{(B)}, and the depth of this branch is $\nu(c)-t$, which is non negative since $\nu\left(\frac{d}{c^2} \right) \leq -2t+1 $, so $\nu(c)-t \geq \frac{1}{2}(\nu(d)-1)\geq -\frac12$, as $d \in \mathcal{O}$. Note that, in the figure,
$v_0$ and $v_1$ are defined over $K$, but $v_{1/2}$ is not. 

Now, we assume that $m_q(X)= X^2+b$, and that there exist $\alpha \in \oink$ satisfying $\alpha^2=b.$ In this case $\eta= q+\alpha \in \mathbb{M}_2(K)$ is a nilpotent element. This implies that $\mathfrak{s}_K(q)
=\mathfrak{s}_K(\eta)$ is an infinite foliage (c.f. \cite[\S 2]{Eichler2}). Let $\mathfrak{f}$ be an infinite foliage whose end is $a \in \mathbb{P}^1(K)$.  If $a \in K$, there is a unique leaf $B \in V_{\mathfrak{f}}$ of the form $B=B_a^{[s]}$.
See Fig. \ref{figure 1}\textbf{(A)}. This is called the highest leaf of $\mathfrak{f}$. 
Note that every infinite foliage in the BTT is determined by its unique end and one leaf, so the infinite foliage described above is completely determined by the end $a$ and the number $s$, and it is denoted $\mathfrak{f}(a,s)$. When $a=\infty$, every leaf has the form $B=B_\varepsilon^{[s]}$, with $\varepsilon\in K$. In the latter case we say $\mathfrak{f}=\mathfrak{f}(\infty,s)$. See Fig. \ref{figure 1}\textbf{(A')}.

\begin{figure}
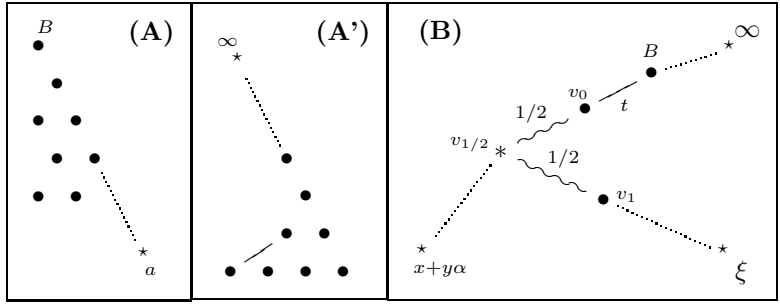

\[ \fbox{ \xygraph{
!{<0cm,0cm>;<0.5cm,0cm>:<0cm,1cm>::} 
!{(0,3) }*+{\bullet}="i" !{(0.2,3.2) }*+{{}^B}="in"
!{(0.5,2.5) }*+{\bullet}="b0" 
!{(1.0,2.0) }*+{\bullet}="b1" 
!{(1.5,1.5) }*+{\bullet}="b2" 
!{(0.5,1.5) }*+{\bullet}="c1" 
!{(0,1) }*+{\bullet}="d1" 
!{(1,1) }*+{\bullet}="d2" 
!{(0,2) }*+{\bullet}="c0"
!{(2.8,0.2) }*+{{}^{\star}}="f"
!{(3,-0.05) }*+{{}^{a}}="0on"
!{(3,3.18) }*+{\textnormal{\bf (A)}}="ti99"
"i"-"b0" "b1"-"b0" "c0"-"b0" "b1"-"b2" "c1"-"b1" "c1"-"d1" "c1"-"d2" "b2"-@{.}"f"
 } }
\fbox{ \xygraph{
!{<0cm,0cm>;<0.5cm,0cm>:<0cm,1.0cm>::} 
!{(3,0) }*+{\bullet}="i"
!{(2.5,0.5) }*+{\bullet}="b0" 
!{(2.0,1.0) }*+{\bullet}="b1" 
!{(1.5,1.5) }*+{\bullet}="b2" 
!{(1.5,0.5) }*+{\bullet}="c1" 
!{(1,0) }*+{\bullet}="d1" 
!{(0,0) }*+{\bullet}="d2" 
!{(2,0) }*+{\bullet}="c0"
!{(0.2,2.8) }*+{{}^{\star}}="f"
!{(-0.05,3) }*+{{}^{\infty}}="0on"
!{(3,3.18) }*+{\textnormal{\bf (A')}}="ti99"
!{(3,-0.15) }*+{\ }="ti99"
"i"-"b0" "b1"-"b0" "c0"-"b0" "b1"-"b2" "c1"-"b1" "c1"-"d1" "c1"-"d2" "b2"-@{.}"f"
 } }
\fbox{ \xygraph{
!{<0cm,0cm>;<.8cm,0cm>:<0cm,.8cm>::} 
 !{(0.5,0) }*+{{}^{x+y\alpha}}="e2" !{(5.6,4) }*+{\infty}="e3" !{(5.5,0) }*+{\xi}="e5"
!{(1.5,2) }*+{*}="t3" !{(1.0,2.1) }*+{{}^{v_{1/2}}}="t10"   !{(2.9,2.7) }*+{\bullet}="t4"  !{(3.2,1.2) }*+{\bullet}="t5" 
!{(4,3.3) }*+{\bullet}="t6" !{(4,3.6) }*+{{}^{B}}="t7" !{(2.8,2.9) }*+{{}^{v_0}}="t8"  !{(3.6,1.2) }*+{{}^{v_1}}="t9" 
!{(2.6,2) }*+{{}^{}}="ti3" !{(0.2,0.3) }*+{{}^{\star}}="f2" !{(5.2,0.3) }*+{{}^{\star}}="f5" !{(5.3,3.7) }*+{{}^{\star}}="f3"
!{(0.5,3,98) }*+{\textnormal{\bf (B)}}="ti99"
   "f2"-@{.}"t3"  "f3"-@{.}"t6" "t3"-@{~}^{1/2}"t5" "f5"-@{.}"t5" "t3"-@{~}^{1/2}"t4" "t6"-^{t}"t4" 
 } }
\]
\caption{The infinite foliage $\mathfrak{f}(a,s)$ for various values of $a$. In \textbf{(B)}, 
vertex $B$ is the highest leaf.}\label{figure 1}
\end{figure}

\begin{lemma}\label{lemma 4.1}
The set $ \mathbf{N}_{a,t}= \mathcal{O}^{*}  \textnormal{\scriptsize{$\left( \begin{array}{cc}
a\pi^{-t} & a^2\pi^{-t}  \\ \pi^{-t} & a \pi^{-t} \\ \end{array} \right)$\normalsize}}$
consists precisely of all nilpotent elements $\eta \in \mathbb{M}_2(K)$ satisfying $\mathfrak{s}_K(\eta)=\mathfrak{f}(a, t)$. On the other hand, the set 
$\mathbf{N}_{\infty,t}=\mathcal{O}^{*}\textnormal{\scriptsize{$\left( \begin{array}{cc}
0 & \pi^{t}  \\
0 & 0
 \\ \end{array} \right)$\normalsize}}$ consists precisely of all nilpotent elements $\eta \in \mathbb{M}_2(K)$ satisfying 
$\mathfrak{s}_K(\eta)= \mathfrak{f}(\infty, t)$. 
\end{lemma}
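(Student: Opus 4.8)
The plan is to reduce the statement to the classification of nilpotent $2\times 2$ matrices together with a direct computation of which lattices $\eta$ stabilises, via \eqref{balltoorder}. First I would record the shape of an arbitrary nonzero nilpotent element $\eta\in\matrici_2(K)$: it has rank one and $\eta^2=0$ forces $\operatorname{im}(\eta)=\ker(\eta)=:\ell_\eta\in\mathbb{P}^1(K)$. Writing $\ell_\eta$ as the span of $\vvect a1$ when $\ell_\eta\neq\infty$, or of $\vvect 10$ when $\ell_\eta=\infty$, and evaluating $\eta$ on a complementary vector, one obtains $\eta\in K^{*}\eta_a$ with $\eta_a:=\vvect a1(1,a)=\sbmattrix a{a^2}1a$, respectively $\eta\in K^{*}\eta_\infty$ with $\eta_\infty:=\vvect 10(0,1)=\sbmattrix 0100$. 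Since $\eta^2=0$ we have $\mathcal{O}[\eta]=\mathcal{O}\oplus\mathcal{O}\eta$, which is unchanged when $\eta$ is rescaled by a unit of $\mathcal{O}$; hence $\mathfrak{s}_K(\eta)$ depends only on the $\mathcal{O}^{*}$-orbit of $\eta$, the sets $\mathbf{N}_{a,t}=\mathcal{O}^{*}\pi^{-t}\eta_a$ and $\mathbf{N}_{\infty,t}=\mathcal{O}^{*}\pi^{t}\eta_\infty$ are exactly single $\mathcal{O}^{*}$-orbits, and every nonzero nilpotent element of $\matrici_2(K)$ belongs to exactly one such set.

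Next I would compute the branch. By \eqref{balltoorder}, $\mathfrak{D}_B\supseteq\mathcal{O}[\eta]$ if and only if $\eta\Lambda_B\subseteq\Lambda_B$, where for $B=B_\varepsilon^{[r]}$ one may take $\Lambda_B=\mathcal{O}\vvect\varepsilon1+\mathcal{O}\vvect{\pi^{r}}0$ with $r=\nu(B)$. I treat $a=\infty$ by hand: for $\eta=\pi^{t}\eta_\infty$ one has $\eta\vvect\varepsilon1=\vvect{\pi^{t}}0$ and $\eta\vvect{\pi^{r}}0=0$, whereas $\Lambda_B\cap K\vvect 10=\pi^{r}\mathcal{O}\vvect 10$, so $\eta\Lambda_B\subseteq\Lambda_B$ if and only if $r\leq t$. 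Thus $\mathfrak{s}_K(\pi^{t}\eta_\infty)$ is the set of all balls $B$ with $\nu(B)\leq t$; this subgraph has a single end, namely $\infty$ (an increasing chain of balls exhausts $K$), and its valency-one vertices are precisely the balls $B_\varepsilon^{[t]}$. Since $\mathfrak{s}_K(\eta)$ is an infinite foliage (\cite{Eichler2}) and such a foliage is determined by its end together with its highest leaf, this yields $\mathfrak{s}_K(\pi^{t}\eta_\infty)=\mathfrak{f}(\infty,t)$.

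For finite $a$ I would deduce the statement from the case $a=\infty$ by conjugation with $\mu_a=\sbmattrix a110$: a one-line computation gives $\mu_a\eta_\infty\mu_a^{-1}=\eta_a$, and $\mu_a$ acts on $\mathfrak{t}(K)$ as the Moebius map $z\mapsto a+z^{-1}$, which sends $\infty$ to $a$ and, by the rule defining $\mu*B$, sends the leaf $B_0^{[-t]}$ of $\mathfrak{f}(\infty,-t)$ to $B_a^{[t]}$. Since $\mathfrak{s}_K\big(\mu\xi\mu^{-1}\big)=\mu\cdot\mathfrak{s}_K(\xi)$ (because $\mu\mathcal{O}\mu^{-1}=\mathcal{O}$ and $\mu\mathfrak{D}_B\mu^{-1}=\mathfrak{D}_{\mu*B}$), we get $\mathfrak{s}_K(\pi^{-t}\eta_a)=\mu_a\cdot\mathfrak{s}_K(\pi^{-t}\eta_\infty)=\mu_a\cdot\mathfrak{f}(\infty,-t)$, an infinite foliage with end $a$ and highest leaf $B_a^{[t]}$, i.e.\ $\mathfrak{f}(a,t)$. (Alternatively one repeats the lattice computation directly, replacing the functional $(0,1)$ by $(1,a)$.) For the converse, if $\eta$ is nilpotent with $\mathfrak{s}_K(\eta)=\mathfrak{f}(a,t)$, then by the classification $\eta=u\pi^{-s}\eta_{a'}$ or $u\pi^{s}\eta_\infty$ for some $u\in\mathcal{O}^{*}$, $s\in\mathbb{Z}$ and $a'\in\mathbb{P}^1(K)$; comparing ends forces $a'=a$, and by the branch computation $\mathfrak{s}_K(\eta)=\mathfrak{f}(a,s)$, so uniqueness of the end-and-highest-leaf description forces $s=t$, whence $\eta\in\mathbf{N}_{a,t}$. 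Together with the inclusions already obtained this gives both equalities of sets.

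The main obstacle is the bookkeeping in the middle step: turning $\eta\Lambda_B\subseteq\Lambda_B$ into the combinatorial description of $V_{\mathfrak{s}_K(\eta)}$ and, above all, getting the foliage parameter to come out with the correct normalisation — note the deliberate asymmetry between $\pi^{-t}$ at a finite centre and $\pi^{t}$ at $\infty$, which is precisely the effect of the inversion $z\mapsto z^{-1}$ hidden in $\mu_a$; keeping track of this sign through the Moebius action (and through the convention for $\mu*B$ when $0\in B$) is where care is needed.
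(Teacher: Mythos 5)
Your proof is correct, and it reaches the lemma by a route whose logical organization differs from the paper's. The paper normalizes the finite fixed point to $0$ by conjugating with the translation $\sbmattrix 1a01$ and then argues in the converse direction only: using that $\mathfrak{f}(0,t)$ contains the vertices $B_0^{[i]}$ for $i\ge t$ but not $B_0^{[t-1]}$, it intersects the corresponding standard orders $\sbmattrix{\mathcal{O}}{\pi^i\mathcal{O}}{\pi^{-i}\mathcal{O}}{\mathcal{O}}$ and invokes nilpotency to force $\eta=\sbmattrix 00{u\pi^{-t}}0$, the forward inclusion being left implicit. You instead first classify all nonzero nilpotents via the rank-one factorization $\eta=c\,\vvect a1(1,a)$ (valid precisely because the characteristic is two, so $(1,a)$ annihilates $\vvect a1$), compute the full vertex set of the branch of the normal form at $\infty$ from the lattice criterion, transfer to finite $a$ by conjugating with $z\mapsto a+z^{-1}$, and recover the converse from the classification together with the fact that a foliage is determined by its end and a leaf (at $\infty$ there is no distinguished highest leaf, but the common leaf level does the same job, so this is only a wording slip). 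What your route buys: both set inclusions are explicit, and you re-derive $\mathfrak{f}(\infty,t)$ as the horoball of all balls of level at most $t$ rather than quoting which vertices of the foliage meet the ray to the end; the price is heavier reliance on the equivariance $\mu\mathfrak{D}_B\mu^{-1}=\mathfrak{D}_{\mu*B}$, including rule (2) for $\mu*B$ and the compatibility with the action on ends, which the paper also uses but only for a translation, where no case-(2) bookkeeping arises. Your normalization checks (the leaf $B_0^{[-t]}$ mapping to $B_a^{[t]}$, hence $\pi^{-t}\eta_a$ at a finite centre versus $\pi^{t}\eta_\infty$ at $\infty$) are the genuinely delicate points, and they come out right.
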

\begin{proof}
Assume first that $a \neq \infty$. Applying a Moebius transformation $\mu$ of the form $z \mapsto z+a$,
which corresponds to the matrix $\mu=\sbmattrix 1a01$.
 we can assume that $a=0$, since $\mu^{-1} \mathbf{N}_{a,t}\mu= \mathbf{N}_{0,t}$. In this case we have
$\eta \in \textnormal{\scriptsize{$\left( \begin{array}{cc}
\mathcal{O} & \pi^i\mathcal{O}  \\
\pi^{-i}\mathcal{O} & \mathcal{O}
 \\ \end{array} \right)$\normalsize}}$,
 for every $ i \geq t$, and $\eta \notin \textnormal{\scriptsize{$\left( \begin{array}{cc}
\mathcal{O} & \pi^{t-1}\mathcal{O}  \\
\pi^{-t+1}\mathcal{O} & \mathcal{O}
 \\ \end{array} \right) $\normalsize}}$. It follows that $\eta = \textnormal{\scriptsize{$\left( \begin{array}{cc}
0 & 0  \\u\pi^{-t} & 0 \\ \end{array} \right)$\normalsize}}$, for some $u \in \mathcal{O}^{*}$.
The case $a=\infty$ is analogous.
\end{proof}

We assume $m_q(X)$ is inseparable and irreducible. Let $L/K$ be the splitting field of $m_q(X)$ and 
let $\alpha \in L$ be a root. Observe that $\mathfrak{s}_K(q)$ is finite, as $\oink[q]$ 
is an integral domain (c.f. \cite[\S4]{Eichler2}). In particular, $\mathfrak{s}_L(q)$ is an infinite foliage 
whose unique end is $z=x+y\alpha$, as in Fig. \ref{figure 1}\textbf{(A)},  is in $L\backslash K$.
Assume $x \in K$ and  $y \in K^{*}$, so that 
$\mathfrak{s}_L(q)=\mathfrak{f}(z,s)$, i.e., $B=B_z^{[s]}$ is a leaf.

By Lemma \ref{lemma 4.1} we have $q=  \textnormal{\scriptsize{$\left( \begin{array}{cc}
(x+y\alpha)u\pi_L^{-s}+\alpha & (x^2+y^2b)u\pi_L^{-s}  \\u\pi_L^{-s} & (x+y\alpha)u\pi_L^{-s}+\alpha
 \\ \end{array} \right)$\normalsize}}$, for some $u \in \mathcal{O}_L^{*}$. As $q \in \mathbb{M}_2(K)$, 
we conclude that $u\pi_L^{-s}\in\mathcal{O}_K$, and therefore also that
 $y=u^{-1}\pi_L^s$, whence $q=\beta'(x,y,b)=\textnormal{\scriptsize{$ \frac{1}{y}
\left( \begin{array}{cc}x& x^2+y^2b  \\1 & x \\ \end{array} \right)$\normalsize}}$ 
and $B=B_z^{[\nu(y)]}$. On the other hand, the $K$-vine $\mathfrak{v}(\xi)$  of $\eta=q+\alpha$, 
which minimizes $|x+\xi+y\alpha|$, by definition satisfies 
$$|x+\xi+y\alpha|^2=|y|^2   \left| \left( \frac{x+\xi}{y}\right)^2+b \right| 
= |y|^2 \left| \delta \left(b \right) \right|.$$ 
Since $\delta(b)=(\pi^{2t+1})$, it follows that $|x+\xi+y\alpha|=|y|\cdot|\pi|^{t+\frac{1}{2}}$, where 
$t \in \mathbb{N}$. We conclude that the depth of $\mathfrak{s}_K(q)$ equals $t$, and its stem is the line 
$\mathfrak{p}[v_0,v_1]$ in Fig. \ref{figure 1}\textbf{(B)}, whose midpoint 
$v_{1/2}=B_\xi^{\big[v(y)+t+\frac{1}{2}\big]}$ is a ghost vertex.

\subsection{Computing symmetric products for split polynomials}\label{s5}

The purpose of this subsection is to provide Table 2, which allows us to easily compute the symmetric
product $\lambda=\Lambda(q_1,q_2)$ and the discriminant $\Delta$ from the branches of either quaternion 
in most cases. This often can be done up to some arbitrary units $u$ and $v$, but they are irrelevant in 
our main proofs.
\begin{table}
\[
\scalebox{0.8}[0.8]{
\begin{tabular}{|c|c|c|c|c|c|c|c|}
\hline $m_1$ & $ m_2$ &
$\mathfrak{m}_K(q_1)$ & $\mathfrak{m}_K(q_2)$ &$\lambda$&$\Delta$\\
\hline\hline
sep.   & sep. &  $\mathfrak{p}(0,\infty)$ & $\mathfrak{p}(1,\theta)$ & 
$a_1\alpha_2+a_2\alpha_1+\frac{a_1a_2}{1+\theta}$
& $\frac{a_1^2a_2^2\theta}{(1+\theta)^2}$\\ \hline
sep. & sep. &  $\mathfrak{p}(0,\infty)$ &  $\mathfrak{p}(0,\infty)$ & $a_1 \alpha_2+a_2 \alpha_1$&$0$\\ \hline
insep. & insep. &  $\mathfrak{f}(\infty,0)$ &  $\mathfrak{f}(0,s)$ & $uv\pi^{-s}$&$u^2v^2\pi^{-2s}$\\ \hline
insep. & insep. &  $\mathfrak{f}(\infty,0)$ &  $\mathfrak{f}(\infty,s)$  & $0$&$0$\\ \hline
sep. & insep. & $\mathfrak{p}(0,1)$   &  $\mathfrak{f}(\infty,r)$ & $a_1(\alpha_2+u\pi^r)$ 
&$a_1^2u^2\pi^{2r}$\\ \hline
sep. & insep. & $\mathfrak{p}(0,\infty)$   &  $\mathfrak{f}(\infty,r)$ & $a_1\alpha_2$&$0$\\ \hline
\end{tabular}}
\]
\caption{The value of $\nu(\lambda)$ for the particular branches described in \S4.1.
Here $u,v\in\mathcal{O}^*$ are arbitrary.}\label{tabla 2}
\end{table}
We assume everywhere that the polynomials $m_1,m_2 \in K[X]$ split, since we can otherwise compute in
a suitable extension where they do. Let $\alpha_i$ be a root of $m_i$, for $i\in\{1,2\}$, and assume the matrices 
$q_1,q_2 \in \mathbb{M}_2(K)$  satisfy $m_1(q_1)=m_2(q_2)=0$. Assume everywhere
that neither $q_1$ nor $q_2$ is a scalar.
 Recall that $\lambda=\Lambda(q_1,q_2)$, as defined in \eqref{133}, is a scalar matrix, 
and therefore a conjugation invariant. This
 allows us to chose a convenient conjugate of the pair $(q_1,q_2)$ to compute $\lambda$,
so in particular it allows us to apply Moebius transformations to move the stems to one of the pairs in the
table.
For simplicity we introduce, for any pair $(a,\alpha)\in\bar{K}\times\bar{K}$, the following matrices:
\small
$$
A(a,\alpha)= \left( \begin{array}{cc}
a+\alpha & 0 \\ 0 & \alpha \\ \end{array} \right), \, \,  A'(a, \alpha)= \left( \begin{array}{cc}
\alpha & a \\ 0 & \alpha \\ \end{array} \right), \, \,  A''(a, \alpha)= \left( \begin{array}{cc}
\alpha & 0 \\ a & \alpha \\ \end{array} \right).
$$
\normalsize

Assume that $m_1$ and $m_2$ are both separable. Let $w_i= \frac{q_i+\alpha_i}{a_i}\in\mathbb{M}_2(K)$,
for $i\in\{1,2\}$, be the corresponding idempotents. Note that  the quaternion involution satisfies
$\overline{q_i}=q_i+a_i$, $\overline{w_i}=\frac{\overline{q_i}+\alpha_i}{a_i}=1+w_i$ and
$\Lambda(\overline{q_1},q_2)=\Lambda(q_1,\overline{q_2})=\lambda+a_1a_2$. For any idempotent $w$,
the only other idempotent with the same branch is $\overline{w}=1+w$. There is, therefore, some ambiguity
in the choice of $q_1$ and $q_2$, but this coincide, in fact, with the ambiguity in the choice of the 
roots in the expression for $\lambda$. As 
$\Delta(\lambda, m_1, m_2)= \Delta(\lambda+ a_1a_2, m_1,m_2)$, this has no effect on the computation of 
$\Delta$.

If the stems $\mathfrak{s}_K(w_1)$ and $\mathfrak{s}_K(w_2)$ fail to coincide, we can assume that they are
the maximal paths $\mathfrak{s}_K(w_1)=\mathfrak{p}(0,\infty)$ and 
$\mathfrak{s}_K(w_2)=\mathfrak{p}(1,\theta)$, 
respectively (c.f. \cite[\S 4]{a-cb1}), where $\theta$ could be $\infty$. As in the preceding reference,
 we can assume  
$w_1=\textnormal{\scriptsize{$\left( \begin{array}{cc} 1 & 0 \\ 0 & 0  \end{array} \right)$\normalsize}}$ and 
$w_2= \frac{1}{\theta+1}\textnormal{\scriptsize{$\left( \begin{array}{cc} \theta & \theta \\ 
1 & 1  \end{array} \right)$\normalsize}}$. 
This leads to the choice $q_1=A(a_1,\alpha_1)$ and $q_2=\left( \begin{array}{cc}
\frac{a_2 \theta}{1+\theta} + \alpha_2 & \frac{a_2 \theta}{1+\theta} \\
\frac{a_2}{1+\theta} & \frac{a_2}{1+\theta} + \alpha_2  \end{array} \right)$. On the other hand,
if $\mathfrak{s}_K(w_1) = \mathfrak{s}_K(w_2)$, we can assume this branch is the maximal path 
$\mathfrak{p}(0, \infty)$. In the latter case we assume 
$w_1=w_2=\textnormal{\scriptsize{$\left( \begin{array}{cc} 1 & 0 \\ 0 & 0  \end{array} \right)$\normalsize}}$, 
so that $q_1=A(a_1,\alpha_1)$ and $q_2=A(a_2, \alpha_2)$. Now a straightforward computation gives us the first
two lines of the table.

Now, we assume that $m_1$ and $m_2$ are both inseparable. Let $\eta_i= q_i+\alpha_i\in \mathbb{M}_2(K)$,
for $i\in\{1,2\}$,  be the associated nilpotent elements. Moebius transformations act transitively on pairs 
$(a,b) \in \mathbb{P}^1(K) \times \mathbb{P}^1(K)$ with $a\neq b$. Therefore, applying a 
Moebius transformation if we needed, we can assume that the ends of $\mathfrak{s}_K(\eta_1)$ and $\mathfrak{s}_K(\eta_2)$ are $0$ and $\infty$, unless they coincide. In the latter case we assume both to
be $\infty$. Hence, we assume $\mathfrak{s}_k(\eta_1)=\mathfrak{f}(\infty, 0)$, in either case, while we
assume  either $\mathfrak{s}_k(\eta_2)= \mathfrak{f}(0,s)$ or $\mathfrak{s}_K(\eta_2)= \mathfrak{f}(\infty,s)$,
according to the case. Equivalently, we assume $\eta_1=A'(u,0)$, and we assume either 
$\eta_2= A''(v\pi^{-s},0)$ or $\eta_2= A'(v\pi^s,0)$, for some $u,v \in \mathcal{O}^{*}$. 
This gives us the third and fourth lines.

Finally, assume that $m_1$ is separable and $m_2$ is inseparable. Let $w_1= \frac{q_1+\alpha_1}{a_1}$ as before.
 Suppose first that the end of $\mathfrak{s}_K(q_2)$ is not an end of $\mathfrak{s}_K(w_1)$. Then, as Moebius transformations act transitively on triples of different ends, we can assume 
$\mathfrak{s}_K(q_2) = \mathfrak{f}(\infty,r)$ and $\mathfrak{s}_K(w_1)= \mathfrak{p}(1,0)$. 
This implies that 
$q_1 =\textnormal{\scriptsize{$\left( \begin{array}{cc} \alpha_1 & 0 \\ 
a_1 & a_1+\alpha_1  \end{array} \right)$\normalsize}}$ 
and $q_2= A'(u\pi^r,\alpha_2)$, for some $u \in \oink^{*}$. On the other hand, if $\mathfrak{s}_K(w_1)$ and $\mathfrak{s}_K(q_2)$ have a common end, then we can choose $q_2$
 as before, while we assume $\mathfrak{s}_K(w_1)=\mathfrak{p}(0,\infty)$, and hence 
$q_1 =A(a_1, \alpha_1)$. The result follows.

\section{Proof of Theorem \ref{t22} in the inseparable case}\label{i}

We denote, as usual, the depth and diameter of the branch $\mathfrak{u}$ (of an order) by 
$p(\mathfrak{u})$ and $d(\mathfrak{u})$, respectively (c.f. \cite{osbotbtt}). 
In all of \S5, $m_i$, $q_i$, $\alpha_i$ and $\lambda$ are as in \S4.1, except that the polynomials
$m_1$ and $m_2$ are assumed to be inseparable, whence $a_1=a_2=0$ in the whole section. 
Denote by $\eta_i=q_i+\alpha_i$ the associated nilpotent for either value of $i\in\{1,2\}$. 
We make frequent use of next result: 
\begin{lemma}\cite[Prop. 2.4]{osbotbtt} \label{lema de int de hojas}
Let $\mathfrak{f}_1,\mathfrak{f}_2$ be two infinite foliages, and let $d= d(\mathfrak{f}_1\cap \mathfrak{f}_2)$. Then, if $d<\infty$, we have $p(\mathfrak{f}_1 \cap \mathfrak{f}_2) = 
\lfloor \frac{d}{2} \rfloor$ and the stem of $\mathfrak{f}_1 \cap \mathfrak{f}_2$ is a vertex when
$d$ is even and an edge otherwise. If $d=\infty$, then $\mathfrak{f}_1 \subset \mathfrak{f}_2$ 
or $\mathfrak{f}_2 \subset \mathfrak{f}_1$.
\end{lemma}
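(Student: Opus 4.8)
The plan is to describe every infinite foliage as a sublevel set of a horocyclic function based at its end, and then reduce the statement to an elementary estimate along the geodesic joining the two ends. For an end $a\in\mathbb{P}^1(K)$ I would fix a horocyclic (Busemann) function $\beta_a\colon V_{\mathfrak{t}(K)}\to\mathbb{Z}$: an integer-valued function that decreases by exactly one along every ray converging to $a$, is $1$-Lipschitz for the graph metric, and satisfies $\beta_a(v)-\beta_a(w)=d(v,z)-d(w,z)$, where $z$ is the vertex at which the rays $\mathfrak{p}[v,a)$ and $\mathfrak{p}[w,a)$ first meet. (For $a=\infty$ one may take $\beta_\infty(B)=\nu(B)$, and the general case follows by a Moebius transformation.) Using the description of $\mathfrak{f}(a,s)$ through its end and its highest leaf in \S\ref{s4}, in particular Lemma \ref{lemma 4.1}, I would check that an infinite foliage with end $a$ is exactly a set of the form $\{v:\beta_a(v)\le s\}$, and conversely that each such set is an infinite foliage with end $a$, whose stem and depth are recovered from $s$.

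If $\mathfrak{f}_1$ and $\mathfrak{f}_2$ have the same end $a$, then $\mathfrak{f}_1\cap\mathfrak{f}_2=\{v:\beta_a(v)\le\min(s_1,s_2)\}$ is again an infinite foliage with end $a$, hence coincides with whichever of $\mathfrak{f}_1,\mathfrak{f}_2$ has the smaller parameter; in particular one of the two contains the other, and its diameter is infinite. Together with the next step, which shows that distinct ends force a finite diameter, this settles the final assertion of the lemma.

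Now assume $a_1\ne a_2$ and let $\mathfrak{p}=\mathfrak{p}(a_1,a_2)$ be the maximal path between them, parametrized by $\mathbb{Z}$ so that $v_n\to a_1$ as $n\to-\infty$ and $v_n\to a_2$ as $n\to+\infty$; along $\mathfrak{p}$ one then has $\beta_{a_1}(v_n)=n+c_1$ and $\beta_{a_2}(v_n)=-n+c_2$ for integers $c_1,c_2$. For an arbitrary vertex $v$, write $v_m$ for its nearest-point projection onto $\mathfrak{p}$ and $h=d(v,\mathfrak{p})$. Since the ray $\mathfrak{p}[v,a_i)$ meets $\mathfrak{p}$ precisely at $v_m$, the displacement property of $\beta_{a_i}$ gives $\beta_{a_1}(v)=m+h+c_1$ and $\beta_{a_2}(v)=-m+h+c_2$, so $v\in\mathfrak{f}_1\cap\mathfrak{f}_2$ if and only if $m+h\le M_1$ and $h-m\le M_2$, where $M_i:=s_i-c_i\in\mathbb{Z}$. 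Putting $m_c=\tfrac12(M_1-M_2)$ and $H=\tfrac12(M_1+M_2)$, which have the same parity, these two inequalities amount to $h+|m-m_c|\le H$, i.e. to $d(v,c)\le H$, where $c$ is the point of $\mathfrak{p}$ at parameter $m_c$: a vertex if $M_1+M_2$ is even, and the midpoint of an edge $e\subset\mathfrak{p}$ if it is odd. Assuming the intersection nonempty (that is $H\ge0$; the empty case lies outside the statement), $\mathfrak{f}_1\cap\mathfrak{f}_2$ is the metric ball of radius $H$ about $c$.

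If $H\in\mathbb{Z}$ this ball is the tubular neighbourhood of depth $H$ of the vertex $c$, a thick line whose stem is a vertex, with diameter $d=2H$ and depth $p(\mathfrak{f}_1\cap\mathfrak{f}_2)=H=\lfloor d/2\rfloor$; if $H\in\mathbb{Z}+\tfrac12$ it is the tubular neighbourhood of depth $H-\tfrac12$ of the edge $e$, a thick line whose stem is an edge, with diameter $d=2H$ and depth $H-\tfrac12=\lfloor d/2\rfloor$. In both cases $d=2H<\infty$, so $d=\infty$ can occur only when $a_1=a_2$, in which case the second paragraph applies. The step requiring the most care is establishing the formulas $\beta_{a_i}(v)=\pm m+h+c_i$ — for which the key point is that the geodesic ray from $v$ to $a_i$ first meets $\mathfrak{p}$ at the projection of $v$ onto $\mathfrak{p}$ — together with the parity bookkeeping that converts the pair of linear inequalities into a single metric-ball condition with the correct, vertex versus edge, stem; the remaining verifications, including the initial identification of foliages with horoball sublevel sets, are routine.
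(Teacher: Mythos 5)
Your argument is correct, but note that the paper does not actually prove this lemma: it is imported verbatim from \cite[Prop.\ 2.4]{osbotbtt}, where the proof is carried out by explicitly describing the vertex sets of the two branches of nilpotent elements (in the spirit of Lemma \ref{lemma 4.1} here) and intersecting them ball by ball. What you give instead is a self-contained metric argument: you identify an infinite foliage with end $a$ as a horoball $\{v:\beta_a(v)\le s\}$ for a Busemann function $\beta_a$ --- which is exactly the point of Remark 1.1, that infinite foliages are horoballs, and for $a=\infty$ is confirmed by the computation behind Lemma \ref{lemma 4.1}, since $\mathfrak{D}_B\ni\sbmattrix0{\pi^s}00$ iff $\nu(B)\le s$ --- and then reduce the intersection to the inequality $h+|m-m_c|\le H$ along the geodesic $\mathfrak{p}(a_1,a_2)$, recognizing the result as the tubular neighborhood of a vertex or of an edge according to parity. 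The computation of $\beta_{a_i}(v)=\pm m+h+c_i$ via the projection onto $\mathfrak{p}$ is the right key step and is valid in a tree, the same-end case correctly yields containment, and distinct ends correctly force $d=2H<\infty$, so the dichotomy in the statement is fully recovered; the two loose ends you leave (empty intersection, i.e.\ $H<0$, which the statement does not address, and the fact that the ball of radius $H$ about $c$ has diameter exactly $2H$, which uses that every vertex of the BTT has valency $\kappa+1\ge3$) are genuinely routine. The trade-off is the usual one: the approach of \cite{osbotbtt} stays within the order-theoretic bookkeeping of balls and lattices that the rest of the paper uses, while your horoball formulation is shorter, coordinate-free, and makes the vertex-versus-edge parity statement transparent; either is acceptable as a proof of the lemma.
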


\begin{figure}
\[
\unitlength 1mm 
\linethickness{0.4pt}
\ifx\plotpoint\undefined\newsavebox{\plotpoint}\fi 
\begin{picture}(100.5,32.25)(8,0)
\put(8,-2){\framebox(50,37)[c]{ }}
\put(58,-2){\framebox(50,37)[c]{ }}
\multiput(35.43,23.43)(0,-.98611){19}{{\rule{.4pt}{.4pt}}}
\put(35.5,22.5){\line(0,1){6.25}}
\put(35.5,7){\line(0,-1){4}}
\put(37,31.5){\makebox(0,0)[cc]{$\star^{\infty}$}}
\put(36.5,.5){\makebox(0,0)[cc]{$\star_0$}}
\put(28.43,22.43){\line(1,0){.9868}}
\put(30.403,22.43){\line(1,0){.9868}}
\put(32.377,22.43){\line(1,0){.9868}}
\put(34.351,22.43){\line(1,0){.9868}}
\put(36.324,22.43){\line(1,0){.9868}}
\put(38.298,22.43){\line(1,0){.9868}}
\put(40.272,22.43){\line(1,0){.9868}}
\put(42.245,22.43){\line(1,0){.9868}}
\put(44.219,22.43){\line(1,0){.9868}}
\put(46.193,22.43){\line(1,0){.9868}}
\put(28.68,6.93){\line(1,0){.9674}}
\put(30.614,6.93){\line(1,0){.9674}}
\put(32.549,6.93){\line(1,0){.9674}}
\put(34.484,6.93){\line(1,0){.9674}}
\put(36.419,6.93){\line(1,0){.9674}}
\put(38.354,6.93){\line(1,0){.9674}}
\put(40.288,6.93){\line(1,0){.9674}}
\put(42.223,6.93){\line(1,0){.9674}}
\put(44.158,6.93){\line(1,0){.9674}}
\put(46.093,6.93){\line(1,0){.9674}}
\put(48.028,6.93){\line(1,0){.9674}}
\put(49.962,6.93){\line(1,0){.9674}}
\put(35.5,26.5){\line(-4,-1){6}}
\multiput(29.5,25)(-.03947368,-.03289474){38}{\line(-1,0){.03947368}}
\multiput(30,25)(.05,-.0333333){30}{\line(1,0){.05}}
\multiput(35.5,24.75)(-.1,-.0333333){30}{\line(-1,0){.1}}
\put(35.75,4){\line(-4,-1){6}}
\multiput(29.75,2.5)(-.03947368,-.03289474){38}{\line(-1,0){.03947368}}
\multiput(30.25,2.5)(.05,-.0333333){30}{\line(1,0){.05}}
\multiput(35.75,5.75)(-.1,-.0333333){30}{\line(-1,0){.1}}
\multiput(35.5,26.5)(.1916667,-.0333333){30}{\line(1,0){.1916667}}
\multiput(35.75,24.5)(.15,-.0333333){15}{\line(1,0){.15}}
\multiput(41,25.25)(-.03947368,-.03289474){38}{\line(-1,0){.03947368}}
\multiput(41,25.25)(.03947368,-.03289474){38}{\line(1,0){.03947368}}
\multiput(35.75,3.75)(.1916667,-.0333333){30}{\line(1,0){.1916667}}
\multiput(36,5.5)(.15,-.0333333){15}{\line(1,0){.15}}
\multiput(41.25,2.5)(-.03947368,-.03289474){38}{\line(-1,0){.03947368}}
\multiput(41.25,2.5)(.03947368,-.03289474){38}{\line(1,0){.03947368}}
\put(20,23){\makebox(0,0)[cc]{\scriptsize{Level $0$}}}
\put(19,7){\makebox(0,0)[cc]{\scriptsize{Level $s<0$}}}
\put(85,22.5){\line(0,1){6.25}}
\put(85,7){\line(0,-1){4}}
\put(87,31){\makebox(0,0)[cc]{$\star^{\infty}$}}
\put(86,0){\makebox(0,0)[cc]{$\star_0$}}
\put(77.93,22.43){\line(1,0){.9868}}
\put(79.903,22.43){\line(1,0){.9868}}
\put(81.877,22.43){\line(1,0){.9868}}
\put(83.851,22.43){\line(1,0){.9868}}
\put(85.824,22.43){\line(1,0){.9868}}
\put(87.798,22.43){\line(1,0){.9868}}
\put(89.772,22.43){\line(1,0){.9868}}
\put(91.745,22.43){\line(1,0){.9868}}
\put(93.719,22.43){\line(1,0){.9868}}
\put(95.693,22.43){\line(1,0){.9868}}
\put(78.18,6.93){\line(1,0){.9674}}
\put(80.114,6.93){\line(1,0){.9674}}
\put(82.049,6.93){\line(1,0){.9674}}
\put(83.984,6.93){\line(1,0){.9674}}
\put(85.919,6.93){\line(1,0){.9674}}
\put(87.854,6.93){\line(1,0){.9674}}
\put(89.788,6.93){\line(1,0){.9674}}
\put(91.723,6.93){\line(1,0){.9674}}
\put(93.658,6.93){\line(1,0){.9674}}
\put(95.593,6.93){\line(1,0){.9674}}
\put(97.528,6.93){\line(1,0){.9674}}
\put(99.462,6.93){\line(1,0){.9674}}
\put(85,26.5){\line(-4,-1){6}}
\multiput(79,25)(-.03947368,-.03289474){38}{\line(-1,0){.03947368}}
\multiput(79.5,25)(.05,-.0333333){30}{\line(1,0){.05}}
\multiput(85,24.75)(-.1,-.0333333){30}{\line(-1,0){.1}}
\put(85.25,6){\line(-4,-1){6}}
\multiput(79.25,4.5)(-.03947368,-.03289474){38}{\line(-1,0){.03947368}}
\multiput(79.75,4.5)(.05,-.0333333){30}{\line(1,0){.05}}
\multiput(85.25,4.25)(-.1,-.0333333){30}{\line(-1,0){.1}}
\multiput(85,26.5)(.1916667,-.0333333){30}{\line(1,0){.1916667}}
\multiput(85.25,24.5)(.15,-.0333333){15}{\line(1,0){.15}}
\multiput(90.5,25.25)(-.03947368,-.03289474){38}{\line(-1,0){.03947368}}
\multiput(90.5,25.25)(.03947368,-.03289474){38}{\line(1,0){.03947368}}
\multiput(85.25,5.75)(.1916667,-.0333333){30}{\line(1,0){.1916667}}
\multiput(85.5,3.75)(.15,-.0333333){15}{\line(1,0){.15}}
\multiput(90.75,4.5)(-.03947368,-.03289474){38}{\line(-1,0){.03947368}}
\multiput(90.75,4.5)(.03947368,-.03289474){38}{\line(1,0){.03947368}}
\put(68,23){\makebox(0,0)[cc]{\scriptsize{Level $s>0$}}}
\put(72,7){\makebox(0,0)[cc]{\scriptsize{Level $0$}}}
\thicklines
\put(84.75,22.5){\line(0,-1){15.5}}
\put(85.5,22){\line(0,-1){14.25}}
\multiput(85,20.75)(-.06111111,-.03333333){45}{\line(-1,0){.06111111}}
\multiput(83,19)(.0652174,.0326087){23}{\line(1,0){.0652174}}
\multiput(85.75,20.75)(.03947368,-.03289474){38}{\line(1,0){.03947368}}
\multiput(85.25,20)(.05,-.0333333){30}{\line(1,0){.05}}
\multiput(85.5,18.5)(.1583333,-.0333333){30}{\line(1,0){.1583333}}
\multiput(85.5,17.75)(.28125,-.03125){8}{\line(1,0){.28125}}
\put(87.75,17.5){\line(0,-1){1.75}}
\multiput(88.5,17.25)(.125,-.03125){8}{\line(1,0){.125}}
\put(88.5,17){\line(0,-1){1}}
\multiput(85.5,12.75)(.1583333,-.0333333){30}{\line(1,0){.1583333}}
\multiput(85.5,12)(.28125,-.03125){8}{\line(1,0){.28125}}
\put(87.75,11.75){\line(0,-1){1.75}}
\multiput(88.5,11.5)(.125,-.03125){8}{\line(1,0){.125}}
\put(88.5,11.25){\line(0,-1){1}}
\multiput(86,9.5)(.03947368,-.03289474){38}{\line(1,0){.03947368}}
\multiput(85.5,8.75)(.05,-.0333333){30}{\line(1,0){.05}}
\multiput(84.75,12.75)(-.11842105,-.03289474){38}{\line(-1,0){.11842105}}
\multiput(85,12)(-.3125,-.03125){8}{\line(-1,0){.3125}}
\multiput(82.5,11.75)(.0326087,-.0652174){23}{\line(0,-1){.0652174}}
\multiput(81,11)(.15625,.03125){8}{\line(1,0){.15625}}
\multiput(82.25,11.25)(.03125,-.125){8}{\line(0,-1){.125}}
\multiput(84.25,18)(-.11842105,-.03289474){38}{\line(-1,0){.11842105}}
\multiput(84.5,17.25)(-.3125,-.03125){8}{\line(-1,0){.3125}}
\multiput(82,17)(.0326087,-.0652174){23}{\line(0,-1){.0652174}}
\multiput(80.5,16.25)(.15625,.03125){8}{\line(1,0){.15625}}
\multiput(81.75,16.5)(.03125,-.125){8}{\line(0,-1){.125}}
\multiput(85.25,9.75)(-.06111111,-.03333333){45}{\line(-1,0){.06111111}}
\multiput(83.25,8)(.0652174,.0326087){23}{\line(1,0){.0652174}}
\put(46.25,31.25){\makebox(0,0)[cc]{\textbf{(A)}}}
\put(96.5,32.25){\makebox(0,0)[cc]{\textbf{(B)}}}
\end{picture}
\]
\caption{Distance between infinite foliages with ends $0$ and $\infty$ \textbf{(A)}, or the
diameter of the intersection \textbf{(B)}.} \label{figure 3a}
\end{figure}
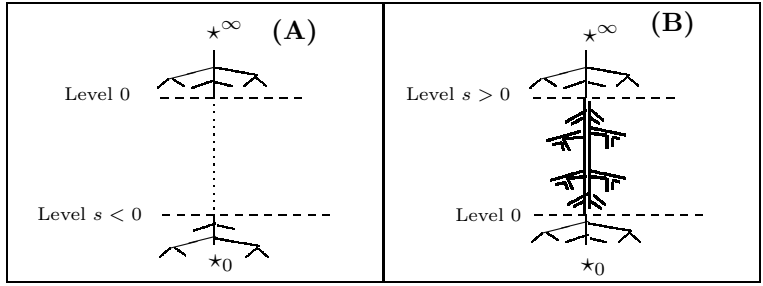
If $m_1$ has a root $\alpha_1 \in K$, we assume $\eta_1=q_1+\alpha_1=
\textnormal{\scriptsize{$\left( \begin{array}{cc} 0 & 1 \\ 0 & 0  \end{array} \right)$\normalsize}}$, whence $\mathfrak{s}_K(q_1)=\mathfrak{s}_K(\eta_1)=\mathfrak{f}(\infty,0)$. 
Furthermore, if both $m_1$ and $m_2$ have roots in $K$, and if the branches $\mathfrak{s}_K(q_1)$ and $\mathfrak{s}_K(q_2)$ have different ends, we can assume also $\mathfrak{s}_K(q_2)=\mathfrak{f}(0,s)$. Then, if $\mathfrak{s}_K(q_1) \cap \mathfrak{s}_K(q_2) = \emptyset$, we have $s>0$ and the distance between these branches is $s=-\nu(\lambda)=-\frac12\nu(\Delta)$, by Table \ref{tabla 2}, which equals $d_f$ by Table \ref{table 1}. 
See  Fig. \ref{figure 3a}\textbf{(A)}.
 On the other hand, if $\mathfrak{u}=\mathfrak{s}_K(q_1) \cap \mathfrak{s}_K(q_2)$ in not empty, then
its diameter is $d(\mathfrak{u})=\nu(\lambda)$ (see Fig. \ref{figure 3a}\textbf{(B)}), so Lemma 
\ref{lema de int de hojas} implies that $p(\mathfrak{u})= \lfloor \frac{\nu(\lambda)}{2} \rfloor$ 
and the stem of $\mathfrak{u}$ has one or two vertices
 depending on the parity of $\nu(\lambda)$. From Table \ref{tabla 2}, we know that 
$\mathfrak{s}_K(q_1)$ and $\mathfrak{s}_K(q_2)$ have the same end precisely when $\lambda=0$. 
In the latter case one branch is contained in the other.

Now we assume that $m_1$ has a root in $K$ but $m_2$ does not. In this case we still assume that
$\mathfrak{s}_K(q_1)=\mathfrak{f}(\infty,0)$.  Let $L=K(\sqrt{b_2})$. We denote by 
$\mu_2=x+y\sqrt{b_2}$ the unique end of $\mathfrak{s}_L(q_2)=\mathfrak{f}\Big(\mu_2,\nu(y)\Big)$ 
and set $\delta\left( b_2\right) = (\pi^{2t_2+1})$, where $t_2 \in \mathbb{Z}$. Note that, in this case,
the $K$-stem $\mathfrak{m}_K(q_2)$ is the edge with endpoints $v_0$ and $v_1$ in Fig. 2 (\textbf{B}).
By applying a Moebius transformation, namely a translation taking $\mu_2$ to $0$, 
we are able to use Table 2 to conclude that $\frac12\nu(\Delta)=\nu(\lambda)$
is the level of the highest leaf of $\mathfrak{s}_L(q_2)$, which according to Fig. 2(\textbf{B}) is $-\nu(y)$.
 In this case, if 
$\mathfrak{m}_K(q_1) \cap \mathfrak{m}_K(q_2) = \emptyset$, the distance between these branches is $\nu(y)+t_2=-\frac12\nu(\Delta)+t_2=d_f$, since the stem $\mathfrak{m}_K(q_2)$ is a distance $t_2$ 
from every leaf of $\mathfrak{s}_L(q_2)$ (see Fig. 2\textbf{(B)} again). For the same reason, if 
$\mathfrak{m}_K(q_1) \cap \mathfrak{m}_K(q_2)  \neq \emptyset$, this intersection is precisely one vertex,
namely $v_0$,  if $t_2=-\nu(y)$, while 
$\mathfrak{s}_K(q_2) \subset \mathfrak{s}_K(q_1)$ if $t_2<-\nu(y)$.
The last two conditions are equivalent to $d_f=0$ and $d_f<0$ respectively, so the result follows in this case.

Finally, we assume that $m_1$ and $ m_2$ are both irreducible. Let $L=k(\sqrt{b_1}, \sqrt{b_2})$. We denote by 
$\mu_i= x_i+y_i\sqrt{b_i}$ the end of $\mathfrak{s}_L(q_i)$, and we define the fake branch
 $\mathfrak{f}(q_i)=\mathfrak{s}_L(\pi^{-t_i-1}\eta_i)=\mathfrak{s}_L(\pi^{-t_i-1}q_i)$, for $i\in\{1,2\}$.
 Note that $\mathfrak{f}(q_i)$ is at distance $t_i+1$ from the leaves of $\mathfrak{s}_L(q_i)$.
 In fact, a leaf of each fake branch, denoted by a white circle in Fig. \ref{figure 3b}, is at a distance $1/2$ from the 
midpoint (denoted by an asterisk) of the corresponding $K$-stem. Now assume that the stems $\mathfrak{m}_K(q_1)$ 
and $\mathfrak{m}_K(q_2)$ are different. Then, applying the computation in the first case over the extension $L$, 
and noting that 
$\Lambda(\pi^{-t_1-1}q_1,\pi^{-t_2-1}q_2)=\pi^{-t_1-t_2-2}\lambda$, we
obtain $d(\mathfrak{f}(q_1), \mathfrak{f}(q_2))=-\nu(\lambda)+t_1+t_1+2$, and this distance is at least $2$,
as we can see in Fig. 4, where the possible relative position of the $K$-stems are depicted. 
Now, as the distance between a leaf of the stem of $\mathfrak{s}_K(q_i)$ and $\mathfrak{f}(q_i)$ is $1$,
for $i\in\{1,2\}$, if $\mathfrak{s}_K(q_1)$ fails to intersect $\mathfrak{s}_K(q_2)$, 
then $d(\mathfrak{m}_K(q_1), \mathfrak{m}_K(q_2))=-\nu(\lambda)+t_1+t_1=d_f$ (Fig. \ref{figure 3b}\textbf{(A)}), while they intersect in a single point precisely when $d_f=0$, as in Fig. \ref{figure 3b}\textbf{(B)}. When the stems coincide, both fake branches have a vertex at distance $1/2$ from the common midpoint. 
Note that this is, for either branch, the ghost vertex $v_{1/2}$ in Fig. 2\textbf{(B)}. 
We conclude that either $d(\mathfrak{f}(q_1), \mathfrak{f}(q_2))=1$, as in Fig. \ref{figure 3b}\textbf{(C)}, or $\mathfrak{f}(q_1)$ intersects $\mathfrak{f}(q_2)$ non-trivially. In either case $-\nu(\lambda)+t_1+t_1+2<2$,
whence $d_f=\nu(\lambda)+t_1+t_1<0$. The result follows.

\begin{figure}
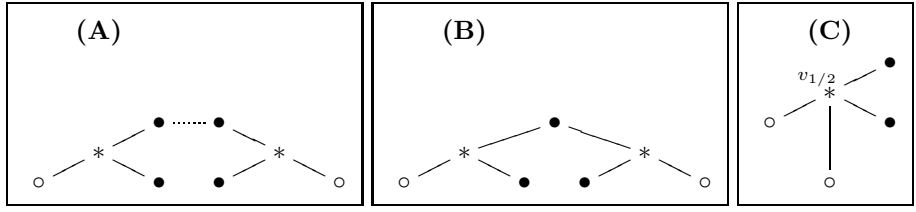

\[ 
\fbox{ \xygraph{
!{<0cm,0cm>;<.8cm,0cm>:<0cm,.8cm>::}
 !{(0,5) }*+{\textbf{(A)}}="na"
 !{(0,3) }*+{*}="x1" !{(-1,2.5) }*+{\circ}="01" !{(1,3.5) }*+{\bullet}="pa1" !{(1,2.5) }*+{\bullet}="pb1"
 !{(3,3) }*+{*}="x2" !{(4,2.5) }*+{\circ}="02" !{(2,3.5) }*+{\bullet}="pa2" !{(2,2.5) }*+{\bullet}="pb2"
"x1"-"01" "x1"-"pa1" "x1"-"pb1" "x2"-"02" "x2"-"pa2" "x2"-"pb2" "pa1"-@{.}"pa2"
 } }\ 
\fbox{ \xygraph{
!{<0cm,0cm>;<.8cm,0cm>:<0cm,.8cm>::}
 !{(0,5) }*+{\textbf{(B)}}="na"
 !{(0,3) }*+{*}="x1" !{(-1,2.5) }*+{\circ}="01" !{(1.5,3.5) }*+{\bullet}="pa1" !{(1,2.5) }*+{\bullet}="pb1"
 !{(3,3) }*+{*}="x2" !{(4,2.5) }*+{\circ}="02"  !{(2,2.5) }*+{\bullet}="pb2"
"x1"-"01" "x1"-"pa1" "x1"-"pb1" "x2"-"02" "x2"-"pa1" "x2"-"pb2" 
 } }\ 
\fbox{ \xygraph{
!{<0cm,0cm>;<.8cm,0cm>:<0cm,.8cm>::}
 !{(0,4) }*+{\textbf{(C)}}="na"
 !{(0,3) }*+{*}="x1" !{(-0.2,3.2) }*+{{}^{v_{1/2}}}="x1n"
!{(-1,2.5) }*+{\circ}="01" !{(1,3.5) }*+{\bullet}="pa1" !{(1,2.5) }*+{\bullet}="pb1"
 !{(0,1.5) }*+{\circ}="02"  
"x1"-"01" "x1"-"pa1" "x1"-"pb1" "x1"-"02" 
 } }
\]
\caption{Relative position of the stems for two irreducible inseparable polynomials.} \label{figure 3b}
\end{figure}

\section{Proof of Theorem \ref{t22} in the separable case}\label{s}

In all of this section, we assume $m_1$ and $m_2$ are separable polynomials. We let $q_i$, $\alpha_i$,
$\lambda$ and $\Delta$ be as in \S4.1. As before, we first prove Theorem \ref{t22} in the case
where  both polynomials split over $K$. For $i\in\{1,2\}$, let $w_i = \frac{q_i+\alpha_i}{a_i}$ be the 
associated idempotent.

From the definition of the fake distance $d_f$  in Th. \ref{t22}, we observe that
$d_f=-\infty$ precisely when $\Delta$ is $0$.
We need to prove that this is the case precisely when  the length $l$ of the intersection 
$\mathfrak{m}_K(q_1) \cap \mathfrak{m}_K(q_2)$ is 
not finite. Furthermore, we need to prove that 
$l=2\infty$ if and only if $q_1$ and $q_2$ commute, while $l=\infty$ when they fail to do so.
Now, the value $2\infty$ is attained precisely when both stems coincide. 
 By the arguments given in \S \ref{s5}, this is the case precisely when one of the following alternatives
hold:
\begin{itemize}
\item $w_1=w_2$ or
\item  $w_1=\overline{w_2}=1+w_2$. 
\end{itemize}
As a commutative sub-algebra contains at most $2$ idempotents, the condition``$q_1$ and $q_2$ commute" 
is equivalent to $l=2\infty$. In this case, we do have $l=2\infty=\mathrm{max}\{-2d_f,l(q_1),l(q_2)\}$,
with the natural conventions. In the remainder of the split case we assume that $q_1$ and $q_2$ fail to 
commute.

We assume that the stems of the quaternions $q_1,q_2 \in \mathbb{M}_2(K)$ 
are as shown in the first row of Table \ref{tabla 2}, i.e., 
$\mathfrak{m}_K(w_1)=\mathfrak{p}(0,\infty)$ and $\mathfrak{m}_K(w_2)=\mathfrak{p}(1,\theta)$,
with $\theta\notin\{0,1\}$. 
In this case, we have $\frac{\Delta}{a_1^2a_2^2}=\frac{\theta}{(1+\theta)^2}$.
 If $\theta=\infty$, we have $d_f=-\infty$, and also the stems intersect in a ray, as shown in 
Fig. \ref{figure 4}\textbf{(C)}. This takes care of the last infinite case.
 Suppose next that $\theta \in K$. Assume first that 
$\mathfrak{m}_K(q_1)\cap \mathfrak{m}_K(q_2) = \emptyset$.
 Then, as seen in Fig. \ref{figure 4}\textbf{(A)}, the smallest ball containing both $0$ and $\theta$ is
$B_0^{[0]}$, whence    $\nu(\theta)=0$ and
$d\Big(\mathfrak{m}_K(q_1),\mathfrak{m}_K(q_2)\Big)= \nu(1+\theta)= 
-\frac{1}{2}\nu\left(  \frac{\theta}{(1+\theta)^2}\right) = d_f$.
 If $\mathfrak{m}_K(q_1)\cap \mathfrak{m}_K(q_2) \neq \emptyset$, 
using the Moebius transformation $\sigma(z) = z^{-1}$ if needed, we can assume $\theta \in \mathcal{O}$, as in 
Fig. \ref{figure 4}\textbf{(B)}. Note that $\frac \theta{(1+\theta)^2} = 
\frac{\sigma(\theta)}{(1+\sigma(\theta))^2}$ is invariant unther this transformation. 
Then the length of $\mathfrak{m}_K(q_1)\cap \mathfrak{m}_K(q_2)$ is $l=\nu(\theta)= \nu\left(  \frac{\theta}{(1+\theta)^2}\right)=-2d_f$, and the result follows.

\begin{figure}
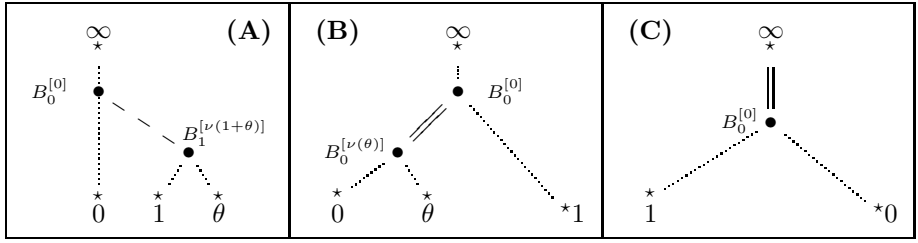

\[ 
\fbox{ \xygraph{
!{<0cm,0cm>;<.8cm,0cm>:<0cm,.8cm>::} 
 !{(1,0) }*+{0}="e2" !{(1,3) }*+{\infty}="e3"
!{(2,0) }*+{1}="e4" !{(3,0) }*+{\theta}="e5"
  !{(2.5,1) }*+{\bullet}="t3"   !{(1,2) }*+{\bullet}="t4" 
!{(3.1,1.3) }*+{{}^{B_1^{[\nu(1+\theta)]}}}="ti3"
!{(0.2,2) }*+{{}^{B_0^{[0]}}}="ti4"
!{(3.5,3) }*+{\textbf{(A)}}="ti99"
 !{(1,0.2) }*+{{}^{\star}}="f2" !{(1,2.7) }*+{{}^{\star}}="f3"
!{(2,0.2) }*+{{}^{\star}}="f4" !{(3,0.2) }*+{{}^{\star}}="f5"
   "f2"-@{.}"f3" "t3"-@{--}"t4"
"t3"-@{.}"f4" "t3"-@{.}"f5"
 } }
\fbox{ \xygraph{
!{<0cm,0cm>;<.8cm,0cm>:<0cm,.8cm>::} 
 !{(0.5,0) }*+{0}="e2" !{(2.5,3) }*+{\infty}="e3"
!{(2,0) }*+{\theta}="e4" !{(4.5,0) }*+{1}="e5"
  !{(2.5,2) }*+{\bullet}="t3"   !{(1.5,1) }*+{\bullet}="t4" 
!{(3.3,2) }*+{{}^{B_0^{[0]}}}="ti3"
!{(0.8,1) }*+{{}^{B_0^{[\nu(\theta)]}}}="ti4"
!{(0.5,3) }*+{\textbf{(B)}}="ti99"
 !{(0.5,0.25) }*+{{}^{\star}}="f2" !{(2.5,2.7) }*+{{}^{\star}}="f3" !{(2,0.2) }*+{{}^{\star}}="f4" !{(4.3,0) }*+{{}^{\star}}="f5"
   "f2"-@{.}"t4" "t3"-@{=}"t4" "f3"-@{.}"t3"
"t4"-@{.}"f4" "t3"-@{.}"f5"
 } }
\fbox{ \xygraph{
!{<0cm,0cm>;<.8cm,0cm>:<0cm,.8cm>::} 
 !{(4.5,0) }*+{0}="e2" !{(2.5,3) }*+{\infty}="e3"
 !{(0.5,0) }*+{1}="e5"
  !{(2.5,1.5) }*+{\bullet}="t3"  
!{(2.0,1.5) }*+{{}^{B_0^{[0]}}}="ti3"
!{(0.5,3) }*+{\textbf{(C)}}="ti99"
 !{(0.5,0.25) }*+{{}^{\star}}="f2" !{(2.5,2.7) }*+{{}^{\star}}="f3" !{(4.3,0) }*+{{}^{\star}}="f5"
  "t3"-@{.}"f2" "f3"-@{=}"t3" "t3"-@{.}"f5"
 } }
\]
\caption{The different possible configurations for the paths in the split separable case (\S6).}\label{figure 4}
\end{figure}

Now we handle the cases where one or both roots  $\alpha_1$ or $\alpha_2$ is not in $K$.
 In any of the following cases:
\begin{itemize}
 \item $m_1$ splits and $m_2$ is unramified,
 \item $m_2$ splits and $m_1$ is unramified or
 \item both $m_1$ and $m_2$ are unramified
\end{itemize}
the fake distance coincide with the the distance between the $L$-stems. Here we have the same cases depicted
in  \cite[Fig. 4]{a-cb1}. We can reason exactly as in \cite[\S6]{a-cb1} to finish the proof in this case.

Now, we assume that $m_1$ splits and $m_2$ is ramified. Then, by the split case, we know that the distance 
between the $L$-stems $\mathfrak{m}_L(q_1)$ and $\mathfrak{m}_L(q_2)$, where $L$ is the decomposition 
field of $m_2$, is $-\frac{1}{2}\nu\left( \frac{\Delta}{a_1^2 a_2^2}\right)$. The $K$-stem
$\mathfrak{m}_K(q_1)$ coincide with $\mathfrak{m}_L(q_1)$, under the usual identification,
and it is a maximal path in $\mathfrak{t}=\mathfrak{t}(K)$,
so it cannot be closer to $\mathfrak{m}_L(q_2)$ than the corresponding $K$-vine, which contains
$\mathfrak{m}_K(q_2)$, as in Fig. 1\textbf{(B)}. As the endpoints of the stem $\mathfrak{m}_K(q_2)$ are at 
distance $t_2$ from the stem $\mathfrak{m}_L(q_2)$, where $\mathbb{D}\left(\frac{b_2}{a_2^2} \right) =(\pi^{-2t_2+1})$, as seen in Fig. \ref{figure 2}\textbf{(B)}, if the stem $\mathfrak{m}_K(q_1)$ 
does not contain the stem $\mathfrak{m}_K(q_2)$, then the distance between these stems is 
$ -\frac{1}{2}\nu\left( \frac{\Delta}{a_1^2 a_2^2}\right)-t_2= d_f$, and the intersection  is
a vertex precisely when $d_f$ is $0$. Let $\alpha_1$ be a root of $m_1$. Note that 
\small
$$\frac{\Delta}{a_1^2a_2^2} = 
\left( \frac{\lambda}{a_1a_2}+\alpha_1\right) ^2 + 
\left( \frac{\lambda}{a_1a_2}+\alpha_1\right) + \frac{b_2}{a_2^2} ,$$
\normalsize
satisfies $\nu\left( \frac{\Delta}{a_1^2a_2^2}\right) \leq \nu \left( \mathbb{D} \left( \frac{b_2}{a_2^2} \right) \right)
= 2t_2+1$, whence $d_f= -\frac{1}{2}\nu\left( \frac{\Delta}{a_1^2a_2^2}\right) - t_2\geq-1/2$.
 Moreover, $\mathfrak{m}_K(q_1)$ contains $\mathfrak{m}_K(q_2)$ precisely when $\mathfrak{m}_L(q_1)$
contains the midpoint of $\mathfrak{m}_K(q_2)$, and this happens precisely when $d_f=-1/2$,
 since the distance between $\mathfrak{m}_L(q_2)$ and the $K$-vine of $q_2$ is $t_2-1/2$. 
Note that, in this case, one possible choice for the $K$-vine is
 the maximal path $\mathfrak{m}_L(q_1)$.

The case where $m_1$ is unramified and $m_2$ is ramified, is analogous, except that we replace the
path  $\mathfrak{m}_K(q_1)$ by a maximal path $\mathfrak{p}$ defined over an unramified extension $F/K$,
which can be converted into an $F$-vine by a suitable Moebius transformation.
In this case $d_f=-\frac{1}{2}$ is not possible, since $v_{1/2}$ in Fig. \ref{figure 2}\textbf{(B)}
is a point in a $K$-vine that is defined over a ramified quadratic extension of $K$, 
so it cannot belong to $\mathfrak{p}$, which contains a unique vertex defined over $K$.
The cases where $m_1$ is ramified but $m_2$ is not are also similar.

In the remaining case, either stem, $\mathfrak{m}_K(q_1)$ or $\mathfrak{m}_K(q_2)$, is an edge located in the 
$K$-vine of the corresponding idempotent, either $w_1$ or $w_2$, as in 
Fig. \ref{figure 2}\textbf{(B)}. Let 
$t_1, t_2 \in \mathbb{N}$ be the integers defined by
$\mathbb{D}\left(\frac{b_i}{a_i^2} \right) =(\pi^{-2t_i+1})$,  for $i\in\{1,2\}$,
 so that $\mathfrak{m}_L(q_i)$ is at a distance $t_i$ from either endpoint of $\mathfrak{m}_K(q_i)$. 
Then the $K$-stems are located as shown in \cite[Fig. 7]{a-cb1}, and we can reason precisely as in the ramified 
case of \cite[\S 6]{a-cb1}.

\section{Proof of Theorem \ref{t22} in the mixed case}\label{i-s}

In this section we assume that $m_1$ is a separable polynomial while $m_2$ is inseparable. Let $q_i$, $\alpha_i$ and $\lambda$ be as in \S4.1. Here  $\Delta=\lambda^2+a_1^2b_2$.

Again, we consider first the case where both polynomials split, so
$d_f=-\frac12\nu\left(\frac{\Delta}{a_1^2}\right)$.
As before, we can assume that $q_1,q_2$ are as given in Table \ref{tabla 2}, this time in row 6 or 5, 
according to whether the branches have a common end (as in Fig. 6(\textbf{A}-\textbf{B})) or not 
(as in Fig. 6(\textbf{C})).
\begin{figure}
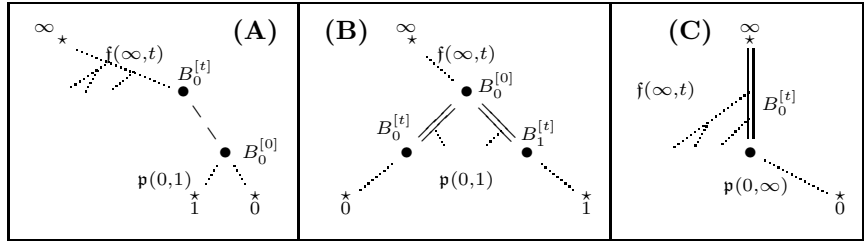

\[ 
\fbox{ \xygraph{
!{<0cm,0cm>;<.8cm,0cm>:<0cm,.8cm>::} 
 !{(0.5,3) }*+{{}^\infty}="e3"
!{(3,0) }*+{{}^1}="e4" !{(4,0) }*+{{}^0}="e5"
  !{(3.5,1) }*+{\bullet}="t3"   !{(2.8,2) }*+{\bullet}="t4"
 !{(2,2.6) }*+{{}^{\mathfrak{f}(\infty,t)}}="t7" !{(2.5,0.5) }*+{{}^{\mathfrak{p}(0,1)}}="t8"
!{(4.1,1) }*+{{}^{B_0^{[0]}}}="ti36"
!{(3,2.3) }*+{{}^{B_0^{[t]}}}="ti4"
!{(4,3) }*+{\textbf{(A)}}="ti99"
!{(0.8,2.8) }*+{{}^{\star}}="f3"
!{(3,0.2) }*+{{}^{\star}}="f4" !{(4,0.2) }*+{{}^{\star}}="f5"
!{(1.5,1.95) }*+{}="u3"  !{(0.8,2) }*+{}="u2"  !{(1.1,1.9) }*+{}="u1"  
!{(2.1,2.45) }*+{}="v3"  !{(1.7,2.6) }*+{}="v2"  !{(1.45,2.5) }*+{}="v1" 
"u3"-@{.}"v3" "u2"-@{.}"v2" "u1"-@{.}"v1"
   "t4"-@{.}"f3" "t3"-@{--}"t4" 
"t3"-@{.}"f4" "t3"-@{.}"f5"
 } }
\fbox{ \xygraph{
!{<0cm,0cm>;<.8cm,0cm>:<0cm,.8cm>::} 
 !{(0.5,0) }*+{{}^0}="e2" !{(1.5,3) }*+{{}^\infty}="e3"
 !{(4.5,0) }*+{{}^1}="e5"
 !{(2.5,2.6) }*+{{}^{\mathfrak{f}(\infty,t)}}="t7" !{(2.5,0.5) }*+{{}^{\mathfrak{p}(0,1)}}="t8"
  !{(2.5,2) }*+{\bullet}="t3"   !{(1.5,1) }*+{\bullet}="t4" !{(3.5,1) }*+{\bullet}="t5" 
!{(3.0,2.1) }*+{{}^{B_0^{[0]}}}="ti3"
!{(1.3,1.4) }*+{{}^{B_0^{[t]}}}="ti4"
!{(3.7,1.3) }*+{{}^{B_1^{[t]}}}="ti5"
!{(0.5,3) }*+{\textbf{(B)}}="ti99"
 !{(0.5,0.2) }*+{{}^{\star}}="f2" !{(1.6,2.8) }*+{{}^{\star}}="f3"
 !{(4.5,0.2) }*+{{}^{\star}}="f5"
 !{(2.7,1) }*+{}="u2"  !{(2.2,1) }*+{}="u1"  
  !{(3.2,1.5) }*+{}="v2"  !{(1.9,1.5) }*+{}="v1" 
 "u2"-@{.}"v2" "u1"-@{.}"v1"
   "f2"-@{.}"t4" "t3"-@{=}"t4" "f3"-@{.}"t3" "t3"-@{=}"t5"
 "t5"-@{.}"f5"
 } }
\fbox{ \xygraph{
!{<0cm,0cm>;<.8cm,0cm>:<0cm,.8cm>::} 
 !{(2.5,0) }*+{{}^0}="e2" !{(1,3) }*+{{}^\infty}="e3"
  !{(1,1) }*+{\bullet}="t3"   !{(0.4,1) }*+{}="u3"  !{(-0.4,1) }*+{}="u2"  !{(0,1) }*+{}="u1"  
!{(1.1,1.7) }*+{}="v3"  !{(1.1,2.1) }*+{}="v2"  !{(0.35,1.575) }*+{}="v1" 
 !{(-0.4,2.0) }*+{{}^{\mathfrak{f}(\infty,t)}}="t7" !{(1.1,0.4) }*+{{}^{\mathfrak{p}(0,\infty)}}="t8"
!{(1.5,1.8) }*+{{}^{B_0^{[t]}}}="ti3"
!{(0.0,3) }*+{\textbf{(C)}}="ti99"
 !{(2.5,0.2) }*+{{}^{\star}}="f2" !{(1,2.8) }*+{{}^{\star}}="f3"
  "t3"-@{.}"f2" "e3"-@{=}"t3" "u3"-@{.}"v3" "u2"-@{.}"v2" "u1"-@{.}"v1"
 } }
\]
\caption{The different possible configurations of the path and the infinite foliage in the split mixed 
case (\S7).}\label{figure 5}
\end{figure}
Note that $\Delta=0$, and hence $d_f=\infty$, only in row $6$, which is precisely the case where
the intersection, depicted as a double line in Fig. \ref{figure 5}\textbf{(C)}, is a ray. 
In any other case we have  $d_f=-\frac12\nu\left(\frac{\Delta}{a_1^2}\right)
=-\nu(u\pi^t)=-t$.
If $\mathfrak{m}_K(q_1)\cap \mathfrak{m}_K(q_2)=\emptyset$, then $-t>0$ is the distance between the 
stems, as it is the length of the dashed line in Fig. \ref{figure 5}\textbf{(A)}. 
In the remaining case, the length of the intersection, denoted by a double line in Fig. \ref{figure 5}\textbf{(B)},
is $2t = -2d_f$.

In all that follows, $L$ is an algebraic extension of $K$ where both $m_1$ and $m_2$ split. 
First we consider the case where $m_1$ is unramified and $m_2$ splits in $K$. 
In this case, $\mathfrak{m}_K(q_1)$ consists only on the highest vertex in $\mathfrak{m}_L(q_1)$, 
and this is the unique vertex in $\mathfrak{m}_L(q_1) \cap \mathfrak{t}(K)$, as in Fig. \ref{figure 2}\textbf{(A)} 
(c.f. \S\ref{s4}). Assume first that $\mathfrak{m}_L(q_1)$ and $\mathfrak{m}_L(q_2)$ fail to intersect.
The minimal path $\mathfrak{p}$ from $\mathfrak{m}_K(q_1)$ to $\mathfrak{m}_L(q_2)$ is defined over $K$
 by Lemma 1.3. We claim that $\mathfrak{p}$  is also the minimal path from $\mathfrak{m}_L(q_1)$ to
$\mathfrak{m}_L(q_2)$. In fact, as $L/K$ is unramified, every vertex in $\mathfrak{p}$ is defined over $K$
 by Lemma 1.1. Since $\mathfrak{m}_L(q_1)\cap\mathfrak{t}(K)$ has a unique vertex, the claim follows,
whence $d(\mathfrak{m}_K(q_1), \mathfrak{m}_K(q_2))=d(\mathfrak{m}_L(q_1), \mathfrak{m}_L(q_2))=
d_f$. When $\mathfrak{m}_L(q_1)$ and $\mathfrak{m}_L(q_2)$ do intersect, a similar argument shows that
$\mathfrak{m}_L(q_1) \cap\mathfrak{m}_L(q_2)$ contains at most one vertex, so that $d_f=0$. 

Next assume $m_1$ ramifies and $m_2$ factors. Here $\mathfrak{m}_K(q_1)$ consists in an edge whose 
midpoint $v_{1/2}$  is at distance $t_1-\frac{1}{2}$ from $\mathfrak{m}_L(q_1)$, where 
$\mathbb{D}\left(\frac{b_1}{a_1^2} \right)=(\pi^{-2t_1+1})$, again as in Fig. \ref{figure 2}\textbf{(B)}.
 The end of the infinite foliage is defined over $K$,
whence it can be assumed to be $\infty$ by applying a suitable Moebius transformation defined over $K$. 
The ends of the maximal path, however, are conjugates, and defined over a ramified separable extension.
We can replace the infinite foliage $\mathfrak{m}_L(q_2)$ with a suitable fake branch 
$\mathfrak{m}_L(\pi_K^{-n}q_2)$, as in \S5, that lies at a distance 
$- \frac12\nu\left( \frac{\Delta}{a_1^2}\right)+n$ from $\mathfrak{m}_L(q_1)$,
for $n$ big enough. Let $B$ be the leaf of this fake branch that is closest to $\mathfrak{m}_K(q_1)$,
as in Fig. \ref{figure 6}\textbf{(A)}. 
It is clear from the picture that the distance from  $B$ to $\mathfrak{m}_K(q_1)$ is
$- \frac12\nu\left( \frac{\Delta}{a_1^2}\right)+n-t_1$. Now the following facts are apparent:
\begin{itemize} 
\item If $\mathfrak{m}_K(q_1)$ does not intersect $\mathfrak{m}_K(q_2)$, so we can set $n=0$,
 we have  $$d(\mathfrak{m}_K(q_1),\mathfrak{m}_K(q_2))= 
- \frac{1}{2}\nu\left( \frac{\Delta}{a_1^2}\right)-t_1=d_f.$$
\item If $d_f=0$, the intersection is a vertex.
\item $d_f$ cannot be $-\frac{1}{2}$, as $B$ is defined over $K$.
\item If $d_f\leq -1$, then $\mathfrak{m}_K(q_1) \subset \mathfrak{m}_K(q_2)$. 
\end{itemize} 
The result follows in this case.

Now, we assume that $m_1$ splits and $m_2$ is ramified. So that $L=K(\alpha_2)$
is an inseparable (ramified) extension. Then, again, the distance from the edge 
$\mathfrak{m}_K(q_2)$ to a leaf of $\mathfrak{m}_L(q_2)$ is $t_2$, where 
$\delta\left(b_2 \right) =(\pi^{2t_2+1})$. In this case the (ghost) midpoint 
$v_{1/2}'$ of $\mathfrak{m}_K(q_2)$ is a leaf of a fake branch 
$\mathfrak{f}(q_2)=\mathfrak{m}_L(q'_2)$, for $q'_2=\pi_L^{-1-2t_2}(q_2-\alpha_2)$, 
as in \S5. The distance from $\mathfrak{f}(q_2)$ to  $\mathfrak{m}_L(q_1)$ is, by applying
the split case to $q_1$ and $q_2'$, the following:
$$ -\frac{1}{2}\nu\left( \frac{\Delta}{a_1^2}\right)+t_2+\frac12= d_f+\frac12.$$
Furthermore, if $\mathfrak{p}$ is the maximal path in $L$ identified with $\mathfrak{m}_K(q_1)$,
then  the path $\mathfrak{p}_1$  from $v_{1/2}'$ to $\mathfrak{p}$ cannot pass through any other 
vertex of the fake branch $\mathfrak{f}(q_2)$, as the latter contains no point defined over $K$,
while the vertex after $v_{1/2}'$ in $\mathfrak{p}_1$ must be defined over $K$ by Lemma 1.1 and 
Lemma 1.3, as $v_{1/2}'$ has neighbors defined over $K$ at distance $1/2$.
Therefore, $\mathfrak{p}_1$ is either trivial, as in Fig.  \ref{figure 6}\textbf{(B)},
 or it contains an endpoint of $\mathfrak{m}_K(q_2)$. The following facts are now apparent:
\begin{itemize}
 \item If $\mathfrak{m}_L(q_1)=\mathfrak{m}_K(q_1)$ does not contain the edge 
$\mathfrak{m}_K(q_2)$, then the distance  between these stems is $d_f\geq0$.
\item  $\mathfrak{m}_L(q_1)$ contains $\mathfrak{m}_K(q_2)$ precisely when $v_{1/2}'$ is in 
$\mathfrak{m}_L(q_1)$, as in Fig. \ref{figure 6}\textbf{(B)}, and in this case $d_f=-1/2$.
\end{itemize}
Note that $d_f=-1/2$ is the minimum possible value in this case.
The case where $m_1$ is unramified and $m_2$ is ramified, is analogous, except that in this case $d_f=-\frac{1}{2}$
 is not possible, as both, the midpoint $v_{1/2}'$, in an edge defined over $K$,  and the path $\mathfrak{m}_L(q_1)$,
with a unique vertex defined over $K$, are defined over different quadratic extensions.

\begin{figure}
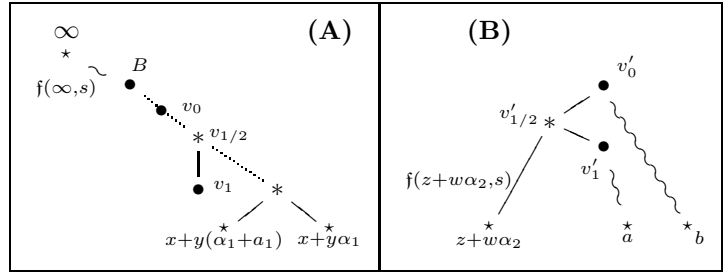

\[ 
\fbox{ \xygraph{
!{<0cm,0cm>;<.7cm,0cm>:<0cm,.7cm>::} 
!{(0.0,4) }*+{\infty}="e3"
!{(1.2,3) }*+{\bullet}="x3"
 !{(0,2.9) }*+{{}^{\mathfrak{f}(\infty,s)}}="f5" !{(1.4,3.3) }*+{{}^{B}}="f7" 
 !{(5,0) }*+{{}^{x+y\alpha_1}}="e2" 
!{(3,0) }*+{{}^{x+y(\alpha_1+a_1)}}="e4" 
  !{(2.5,2) }*+{*}="t3"    !{(1.8,2.5)}*+{\bullet}="t6"    !{(2.5,1) }*+{\bullet}="t7" 
  !{(4,1) }*+{*}="t4" 
!{(3.1,2) }*+{{}^{ v_{1/2}}}="ti3" !{(2.4,2.5) }*+{{}^{v_0}}="ti3" !{(3,1.) }*+{{}^{v_1}}="ti3"
!{(5,4) }*+{\textbf{(A)}}="ti99"
!{(0,3.5) }*+{{}^{\star}}="f3"  !{(5,0.2) }*+{{}^{\star}}="f2" 
!{(3,0.2) }*+{{}^{\star}}="f4" 
   "f2"-"t4" "t3"-@{.}"t4" "t3"-@{.}"x3" "f3"-@{~}"x3"  
"t4"-"f4" "t3"-@{-}"t7" "t3"-@{-}"t6"}}
\fbox{ \xygraph{
!{<0cm,0cm>;<.8cm,0cm>:<0cm,.8cm>::} 
 !{(0.5,-0.02) }*+{{}^{z+w\alpha_2}}="e2"  !{(2.8,0) }*+{{}^{a}}="e5"
!{(1.5,2) }*+{*}="t3" !{(1.0,2.1) }*+{{}^{v_{1/2}'}}="t10"   !{(2.4,2.6) }*+{\bullet}="t4"  !{(2.4,1.6) }*+{\bullet}="t5"  !{(0,1) }*+{{}^{\mathfrak{f}(z+w\alpha_2,s)}}="e6" 
!{(4,0) }*+{{}^{b}}="t6"  !{(2.8,2.9) }*+{{}^{v'_0}}="t8"  !{(2.2,1.2) }*+{{}^{v'_1}}="t9" 
!{(2.6,2) }*+{{}^{}}="ti3"
!{(0.5,3.5) }*+{\textbf{(B)}}="ti99"
 !{(0.5,0.2) }*+{{}^{\star}}="f2"  !{(2.8,0.2) }*+{{}^{\star}}="f5" !{(3.8,0.2) }*+{{}^{\star}}="f6"  
   "f2"-"t3" "t3"-@{-}"t5" "f5"-@{~}"t5" "t3"-@{-}"t4" "f6"-@{~}"t4" 
 } }
\]
\caption{In \textbf{(A)}, $B$ is a leaf of $\mathfrak{m}_K(\pi_K^{-n}q_2)$ and 
$\lbrace v_0,v_1\rbrace$ are the vertices $\mathfrak{m}_K(q_1)$. In \textbf{(B)} 
the vertices of $\mathfrak{m}_K(q_2)$ are $v'_0$ and $v'_1$.}\label{figure 6}
\end{figure}

In the remaining case, either $K$-stem, $\mathfrak{m}_K(q_1)$ or $\mathfrak{m}_K(q_2)$, consist in an edge 
located in the $K$-vine minimizing the fake distance to the corresponding $L$-stem, 
as in Fig. \ref{figure 2}\textbf{(B)} and Fig. \ref{figure 1}\textbf{(B)}, respectively. 
Let $t_1, t_2 \in \mathbb{N}$ the integers satisfying 
$\mathbb{D}\left(\frac{b_1}{a_1^2} \right) =(\pi^{-2t_1+1})$ and 
$\delta\left(b_2 \right) =(\pi^{2t_2+1})$. We define the fake branch of $\mathfrak{s}_K(q_2)$ by 
$\mathfrak{f}(q_2) = \mathfrak{s}_L(q''_2)$, where, $q_2''=\pi^{-t_2-1} (q_2-\alpha_2)$, as before. 
Recall from \S5 that a leaf of this fake branch is located as one of the white circles in Fig. 4. 
By applying the computation for the split case, this time to $q_1$ and  $q_2''$,
we conclude that the distance from $\mathfrak{f}(q_2)$ to $\mathfrak{m}_L(q_1)$ is
$-\frac12\nu\left(\frac{\Delta}{a_1^2}\right)+t_2+1=d_f+t_1+1$,
unless this amount is negative, and in the latter case $\mathfrak{m}_L(q_1)$ intersects $\mathfrak{f}(q_2)$
non-trivially. The upcoming reasoning is analog  to the inseparable case, as depicted in Fig. 4, except that
in this case the $L$-stem of the separable quaternion $q_1$  is at distance $t_1-1/2$ behind the corresponding
$K$-stem. If the two $K$ stems  $\mathfrak{m}_K(q_1)$ and $\mathfrak{m}_K(q_2)$ are different, even if they 
intersect at one point,  the path from $\mathfrak{f}(q_2)$ to $\mathfrak{m}_L(q_1)$ 
must pass though one endpoint of each $K$-stem. The branch $\mathfrak{f}(q_2)$ is at distance 
$1/2$ from the midpoint of  $\mathfrak{m}_K(q_2)$, so the distance from this white point to the midpoint of 
$\mathfrak{m}_K(q_1)$ is larger that the distance between the $K$-stems by $3/2$.
The distance between the $K$-stems is therefore given by
$(d_f+t_1+1)-(t_1-1/2)-3/2=d_f$, whence the the result follows. 
If the $K$-stems coincide and $d_f+t_1+1\geq0$, Fig. 4(\textbf{C}) shows that the path
from $\mathfrak{f}(q_2)$ to $\mathfrak{m}_L(q_1)$ cannot be longer than $1/2+(t_1-1/2)=t_1$,
so $d_f<0$. The same conclusion holds if   $d_f+t_1+1<0$. This finishes the proof.

\section{On the representations of some algebras}\label{s6}

In this section we prove Theorem \ref{t24} using tools from representation theory.
Let $\lambda\in K$, let $m_i(X) = X^2+a_iX+b_i\in\mathcal{O}[X]$, for $i\in\{1,2\}$, and define $\Delta$
as in Theorem 2.2.  In all of \S8, we consider the $K$-algebra $\mathcal{A}=\mathcal{A}(\lambda,m_1,m_2)$, 
defined in terms of generators and relations as follows:
\begin{equation}\label{eq algebra}
\mathcal{A}=K \Big[\q_1,\q_2\Big|m_1(\q_1)= m_2(\q_2)=0,\ \q_1(\q_2+a_2)+\q_2(\q_1+a_1)=\lambda\Big].
\end{equation}

\begin{lemma}\label{lema 6.1}
The algebra $\mathcal{A}$ defined in (\ref{eq algebra}) is a $4$-dimensional $K$-algebra.
 It is a quaternion algebra if and only if $\Delta\neq 0$. 
\end{lemma}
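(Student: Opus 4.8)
The plan is to pin down $\dim_K\mathcal{A}$ first, and then read off the quaternion condition as the nonvanishing of a discriminant. Using $m_i(\q_i)=0$, i.e. $\q_i^2=a_i\q_i+b_i$, together with $\q_2\q_1=\lambda+a_2\q_1+a_1\q_2+\q_1\q_2$ (rewriting the last relation in \eqref{133}, with $\mathrm{char}\,K=2$), every word in $\q_1,\q_2$ rewrites to a $K$-linear combination of $1,\q_1,\q_2,\q_1\q_2$, so $\dim_K\mathcal{A}\le 4$. For the reverse inequality I would introduce the explicit model $\mathcal{A}_0=K\cdot1\oplus K\q_1\oplus K\q_2\oplus K\q_1\q_2$ with the bilinear multiplication dictated by those three relations; for instance $\q_1(\q_1\q_2)=b_1\q_2+a_1\q_1\q_2$ and $(\q_1\q_2)^2=b_1b_2+(\lambda+a_1a_2)\,\q_1\q_2$. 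Associativity of $\mathcal{A}_0$ is a bounded check: by Bergman's diamond lemma it suffices to resolve the four overlap ambiguities $\q_1^3$, $\q_2^3$, $\q_2\q_1^2$ and $\q_2^2\q_1$, and a direct reduction shows each produces the same normal form along both paths. Since $\mathcal{A}_0$ is generated by $\q_1,\q_2$ and satisfies the relations of \eqref{eq algebra}, the universal map $\mathcal{A}\to\mathcal{A}_0$ is surjective, and comparing dimensions forces $\mathcal{A}\cong\mathcal{A}_0$; in particular $\dim_K\mathcal{A}=4$.

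Next I would equip $\mathcal{A}_0$ with a standard involution. Define the $K$-linear map $x\mapsto\bar x$ by $\bar 1=1$, $\bar\q_i=a_i+\q_i$ and $\overline{\q_1\q_2}=(\lambda+a_1a_2)+\q_1\q_2$; the last value equals $\bar\q_2\,\bar\q_1$ by \eqref{133}, and a short computation shows $x\mapsto\bar x$ is an anti-automorphism and that $x^2+\mathrm{Trd}(x)\,x\in K$ for every $x$, where $\mathrm{Trd}(x):=x+\bar x\in K$; equivalently $\mathrm{Nrd}(x):=x\bar x\in K$. (It is enough to verify the quadratic identity on the three-dimensional subspace $K\q_1+K\q_2+K\q_1\q_2$, which is routine.) Thus $\mathcal{A}_0$ is a $4$-dimensional $K$-algebra with a standard involution, and by the classical criterion in that setting (the characteristic-$2$ counterpart of the results of \cite[Ch.~I]{vigneras}) $\mathcal{A}_0$ is a quaternion algebra if and only if the symmetric $K$-bilinear form $T(x,y)=\mathrm{Trd}(x\bar y)$ --- the polar form of the reduced norm --- is nondegenerate.

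Finally I would compute $\det T$ in the basis $e_0=1,\ e_1=\q_1,\ e_2=\q_2,\ e_3=\q_1\q_2$. From the relations one obtains $\mathrm{Trd}(e_1)=a_1$, $\mathrm{Trd}(e_2)=a_2$, $\mathrm{Trd}(e_3)=\lambda+a_1a_2$, hence $T(e_0,e_i)=\mathrm{Trd}(e_i)$, while a one-line reduction in each case gives $T(e_1,e_2)=\lambda$, $T(e_1,e_3)=a_2b_1$ and $T(e_2,e_3)=a_1b_2$. All diagonal entries vanish, because $T(x,x)=\mathrm{Trd}(x\bar x)=2\,\mathrm{Nrd}(x)=0$ in characteristic $2$, so $T$ is alternating and $\det T=\operatorname{Pf}(T)^2$ with
\[
\operatorname{Pf}(T)=T(e_0,e_1)T(e_2,e_3)+T(e_0,e_2)T(e_1,e_3)+T(e_0,e_3)T(e_1,e_2)=\lambda^2+a_1a_2\lambda+a_1^2b_2+a_2^2b_1=\Delta .
\]
Hence $T$ is nondegenerate exactly when $\Delta\neq 0$, which together with the previous paragraph is precisely the assertion of the lemma.

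The only genuinely non-mechanical point is this last identity $\operatorname{Pf}(T)=\Delta$: it is what ties the abstract presentation \eqref{eq algebra} to the numerical invariant $\Delta$ appearing in Theorem \ref{t22}. Everything else is diamond-lemma bookkeeping and an appeal to the standard characterization of quaternion algebras among $4$-dimensional algebras with a standard involution --- the one subtlety being that, in characteristic $2$, one must work with the polar (bilinear) form of the reduced norm, which is alternating, rather than with any "half-trace'' construction.
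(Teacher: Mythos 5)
Your argument is correct, but it reaches the lemma by a genuinely different route than the paper. The paper splits into cases on $a_1,a_2$ and, when $\Delta\neq 0$, rewrites the generators (e.g.\ $\q_2'=\lambda+a_2\q_1+a_1\q_2$, which indeed satisfies $\q_2'^2=\Delta$) so as to exhibit $\mathcal{A}$ as a cyclic algebra in the sense of \cite[Ch.\ I]{vigneras}; when $\Delta=0$ it produces an explicit $4$-dimensional representation with nontrivial radical (or notes that $\mathcal{A}$ is commutative when $a_1=a_2=0$), which simultaneously gives $\dim_K\mathcal{A}=4$ and rules out the quaternion property. Those explicit presentations are not wasted: they are reused in the proof of Theorem \ref{t24}. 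You instead prove $\dim_K\mathcal{A}=4$ uniformly (diamond-lemma resolution of the four overlaps, which does check out -- I verified $\q_2\q_1^2$ and $\q_2^2\q_1$ reduce consistently), endow $\mathcal{A}$ with its standard involution, and identify $\Delta$ as the Pfaffian of the alternating trace pairing $T(x,y)=\mathrm{Trd}(x\bar y)$; your Gram entries and the identity $\operatorname{Pf}(T)=\lambda^2+a_1a_2\lambda+a_1^2b_2+a_2^2b_1=\Delta$ are correct. This buys uniformity (no case split on $a_1,a_2,\lambda$) and a conceptual explanation of why $\Delta$ is the right invariant, at the price of invoking the structure theory of algebras with standard involution rather than exhibiting the quaternion structure explicitly.

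The one point you should repair is the black box itself: the criterion ``a $4$-dimensional algebra with standard involution is a quaternion algebra if and only if $T$ is nondegenerate'' is true, but it is not the content of \cite[Ch.\ I]{vigneras}, which does not treat the characteristic-$2$ standard-involution formalism; the correct reference is the characteristic-$2$ characterization in Voight's book on quaternion algebras (Main Theorem of the characteristic-$2$ chapter), or else you should prove it. A short self-contained proof is available along your lines: any nilpotent element has reduced trace $0$, so the Jacobson radical of $\mathcal{A}$ lies in the radical of $T$; hence $T$ nondegenerate forces $\mathcal{A}$ semisimple, and a $4$-dimensional semisimple $K$-algebra admitting a standard involution with $T$ nondegenerate must be central simple (products and quartic fields either admit no standard involution or have $T\equiv 0$, as happens for $K(\sqrt{b_1},\sqrt{b_2})$ where the involution is the identity), hence quaternion; the converse direction is the standard computation that the trace pairing of a quaternion algebra in characteristic $2$ is a nondegenerate alternating form. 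With that reference or argument supplied, your proof is complete.
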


\begin{proof}
First, we assume that $a_1\neq 0$ and $\Delta\neq 0$. Let 
$\q_2'=\lambda+a_2 \q_1+ a_1\q_2$ and $\q_1'=\frac{\q_1}{a_1}$. It is apparent that
$$\mathcal{A}= K\left[ \q_1',\q_2'\Big| \q_1'^2+\q_1'+\frac{b_1}{a_1^2}=\q_2'^2+\Delta=
 \q_2'\q_1'+(\q_1'+1)\q_2'=0\right] $$
is a cyclic algebra, and therefore a quaternion algebra 
(c.f. \cite[Ch. 1, \S 1, Ex. 1.6]{vigneras}). Now, we assume that $a_1=a_2=0$ 
and $\Delta=\lambda^2\neq 0$. If $b_2=b_1=0$, we can replace $\q_2$ by 
 $\widehat{\q_2}=\q_2+1$, and with the new generators $\mathcal{A}=\mathcal{A}(\lambda, X^2, X^2+1)$,
so we assume $b_2 \neq 0$. In the latter case, $\q_1''=\frac{\q_1\q_2}{\lambda}$ gives the presentation 
$$\mathcal{A}= K\left[ \q_1'',\q_2\Big|\q_1''^2+\q_1''+\frac{b_1b_2}{\lambda^2}=  \q_2^2+b_2= \q_1''\q_2+\q_2(\q_1''+1)=0\right],$$ 
which is again a cyclic algebra. 

In the rest of the proof we assume $\Delta=0$. First consider the case $a_1 \neq 0$. 
Define $\q_1'$ and $\q_2'$ as above, and let $\alpha'_1\in\overline{K}$ be a root of the irreducible polynomial 
$m'_1(X)=X^2+X+\frac{b_1}{a_1^2}$ of $\q'_1$. Set $L=K(\alpha'_1)$, so that there exists  a representation
$\phi:\mathcal{A}_L=\mathcal{A}\otimes_K L \rightarrow \mathbb{M}_4(L)$ 
defined by
$$ \phi(\q_1'\otimes 1)= \textnormal{\scriptsize{$ \left( \begin{array}{cccc}
\alpha'_1+1 & 0  &0 & 0\\
0 & \alpha'_1 & 0& 0 \\
0 & 0 & \alpha'_1 &0 \\
0 & 0 & & \alpha'_1+1 \end{array} \right) $\normalsize}} , \quad \phi(\q_2'\otimes 1)=
\textnormal{\scriptsize{$ \left( \begin{array}{cccc}
0 & 0  &0 & 0\\
1 & 0 & 0& 0 \\
0 & 0 & 0 &0 \\
0 & 0 &1 & 0 \end{array} \right) $\normalsize}},
$$
whose image is $4$-dimensional, and has a non-trivial radical $R(\mathcal{A})$. 
This proves that $\mathcal{A}$ is not a quaternion algebra and $\dim_K\mathcal{A}\geq 4$.
The converse inequality follows from the fact that $\{1,\q_1,\q_2,\q_1\q_2\}$ spans $\mathcal{A}$
as a vector space. The same argument holds if $a_2\neq 0$. 
On the other hand, if $a_1=a_2=0$, then $0=\Delta=\lambda^2$, and therefore $\lambda=0$.
Replacing these values in (\ref{eq algebra}), we obtain that $\mathcal{A}$ is a $4$-dimensional commutative algebra.
\end{proof}

\begin{proposition}\label{t23} Assume $\Delta \neq 0$, so $\mathcal{A}$ is a quaternion algebra. Then
 $\mathcal{A} \cong \mathbb{M}_2(K)$ if and only if at least one of the following conditions holds:
\begin{itemize}
\item[i.-] $m_1(X)$ or $m_2(X)$ has a zero in $K$,
\item[ii.-] $\lambda=C(x,y,z,w)$ for some pairs $(x,y),(z,w) \in K \times K^{*}$, as in \eqref{eq2}.
\end{itemize}
\end{proposition}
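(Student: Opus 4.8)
The plan is to turn the statement into a question about representations into $\matrici_2(K)$ and then either to exhibit or to obstruct such a representation explicitly. Since $\Delta\neq0$, Lemma~\ref{lema 6.1} tells us that $\mathcal{A}$ is a quaternion algebra, hence a central simple $K$-algebra of dimension $4$. Therefore $\mathcal{A}\cong\matrici_2(K)$ if and only if there is a unital $K$-algebra homomorphism $\phi\colon\mathcal{A}\to\matrici_2(K)$: the kernel of any such $\phi$ is a proper two-sided ideal of a simple algebra, hence zero, so $\phi$ is injective, and then bijective by a dimension count. By the presentation~\eqref{eq algebra}, such a $\phi$ is the same datum as a pair of matrices $Q_1=\phi(\q_1)$, $Q_2=\phi(\q_2)$ in $\matrici_2(K)$ satisfying the relations in~\eqref{133}. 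I will also use that $\{1,\q_1,\q_2,\q_1\q_2\}$ is a $K$-basis of $\mathcal{A}$ (from the proof of Lemma~\ref{lema 6.1}), so that $\q_1,\q_2$ — and hence $Q_1,Q_2$ whenever $\phi$ exists — are non-scalar, with minimal polynomial equal to the characteristic polynomial, namely $m_1$ and $m_2$.

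First I would dispose of the two ``if'' directions. If (i) holds, say $m_i$ has a root $r\in K$, then $\q_i-r\in\mathcal{A}$ is either a nonzero zero divisor (when $m_i$ is separable: $m_i(X)=(X+r)(X+a_i+r)$ with $a_i\neq0$, and $\q_i+r$, $\q_i+a_i+r$ are both nonzero while their product is $m_i(\q_i)=0$) or a nonzero nilpotent (when $m_i$ is inseparable: $m_i=X^2+b_i$ with $b_i=r^2$, so $(\q_i-r)^2=\q_i^2+b_i=0$). A quaternion algebra over a field is either a division algebra or isomorphic to $\matrici_2(K)$, and a division algebra has neither zero divisors nor nilpotents, so $\mathcal{A}\cong\matrici_2(K)$. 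If (ii) holds, I would produce the matrices by hand: set $Q_1=\beta(x,y,a_1,b_1)$ and $Q_2=\beta(z,w,a_2,b_2)$, the normal form introduced in~\S\ref{s4}, which make sense since $y,w\in K^{*}$ and which have characteristic polynomials $m_1,m_2$, so $m_1(Q_1)=m_2(Q_2)=0$. The key point is the identity
$$\Lambda\big(\beta(x,y,a_1,b_1),\,\beta(z,w,a_2,b_2)\big)=C(x,y,z,w)\,\sbmattrix1001,$$
which follows by writing out the two matrices and applying~\eqref{conj2}. Hence $C(x,y,z,w)=\lambda$ makes $(Q_1,Q_2)$ satisfy~\eqref{133}, producing $\phi$ and thus $\mathcal{A}\cong\matrici_2(K)$.

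For the converse I assume $\mathcal{A}\cong\matrici_2(K)$ and that (i) fails, so $m_1$ and $m_2$ are both irreducible over $K$, and I must recover (ii). Fix $\phi$ and the resulting non-scalar matrices $Q_1,Q_2$ with characteristic polynomials $m_1,m_2$. A non-scalar $2\times2$ matrix is cyclic, hence conjugate to the companion matrix of its characteristic polynomial; conjugating the pair simultaneously (which does not change $\Lambda(Q_1,Q_2)=\lambda\sbmattrix1001$, a scalar), I may assume $Q_1=\beta(0,1,a_1,b_1)=\sbmattrix0{b_1}1{a_1}$. Now $Q_2$ is non-scalar with irreducible characteristic polynomial $m_2$, so its lower-left entry is nonzero — otherwise $Q_2$ would be triangular with a diagonal entry a root of $m_2$ in $K$; calling that entry $1/w$ and its $(1,1)$ entry $z/w$ identifies $Q_2=\beta(z,w,a_2,b_2)$ with $w\in K^{*}$. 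The identity above then reads $\lambda\sbmattrix1001=\Lambda(Q_1,Q_2)=C(0,1,z,w)\sbmattrix1001$, so (ii) holds with $(x,y)=(0,1)$.

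The step I expect to need the most care is the converse reduction to the normal form $\beta$: one must be sure that, once $Q_1$ is put in companion form, every admissible $Q_2$ is automatically of the shape $\beta(z,w,a_2,b_2)$ with $w\neq0$. What makes this painless — and avoids the case analysis by relative position of branches used in~\S\ref{i}--\S\ref{i-s} — is precisely the irreducibility of $m_2$, which forbids the only bad configuration, namely a triangular $Q_2$. The sole genuine computation is the identity $\Lambda(\beta(x,y,a_1,b_1),\beta(z,w,a_2,b_2))=C(x,y,z,w)\sbmattrix1001$, and I expect its right-hand side to reproduce verbatim the bracketed expression defining $C$ in~\eqref{eq2}; everything else is bookkeeping with~\eqref{conj2} and the simplicity of $\mathcal{A}$.
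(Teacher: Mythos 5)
Your proof is correct, but it takes a genuinely different route from the paper's, so it is worth contrasting the two. You work throughout with explicit matrix models: for (ii)$\Rightarrow\matrici_2(K)$ you exhibit the representation directly via the normal forms $\beta(x,y,a_1,b_1)$, $\beta(z,w,a_2,b_2)$ and the identity $\Lambda\big(\beta(x,y,a_1,b_1),\beta(z,w,a_2,b_2)\big)=C(x,y,z,w)$ (which does check out, using characteristic two to cancel the two copies of $xz$), and for the converse you conjugate $Q_1$ into companion form and use irreducibility of $m_2$ to force a nonzero lower-left entry of $Q_2$, so that $Q_2=\beta(z,w,a_2,b_2)$ with $w\neq0$ and (ii) drops out with $(x,y)=(0,1)$. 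The paper instead argues intrinsically inside $\mathcal{A}$: for (ii) it computes $\q\overline{\q}=yw\big(C(x,y,z,w)+\lambda\big)$ for $\q=(x-z)+y\q_1+w\q_2$, producing a zero divisor, and for the converse it intersects the three-dimensional span of $\{1,q_1,q_2\}$ with the two-dimensional isotropic subspace of matrices with vanishing first row to obtain a norm-zero element $q=(x-z)+yq_1+wq_2$, rules out $y=0$ or $w=0$ because each $q_i$ generates a field, and reads off $C(x,y,z,w)=\lambda$ from $q\overline{q}=0$. The two arguments share the same computational core --- $yw\big(C(x,y,z,w)+\lambda\big)$ is the reduced norm of $(x-z)+y\q_1+w\q_2$, which in your version surfaces as the $\Lambda$-identity for the $\beta$-matrices --- but the paper's dimension-count argument is shorter and coordinate-free, while yours has the advantage of producing explicit matrix realizations of the generators (in the spirit of the constructions used elsewhere in \S4 and in the proof of Theorem \ref{t24}) and of making the ``only if'' direction a concrete normal-form calculation rather than an existence argument for isotropic vectors. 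One small remark: in the converse you can get non-scalarity of $Q_1,Q_2$ directly from the irreducibility of $m_1,m_2$ (a scalar root would be a root in $K$), so the appeal to the basis $\{1,\q_1,\q_2,\q_1\q_2\}$ is not needed there.
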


\begin{proof}
It follows from the Lemma \ref{lema 6.1} that $\mathcal{A} \cong \mathbb{M}_2(K)$ or $\mathcal{A}$ is a 
quaternion division 
algebra (c.f. \cite[\S 52 E, Theo. 52.9]{Om}). Suppose that $m_1$ has a root $\alpha_1 \in K$. 
If $a_1 \neq 0$, then $\frac{\q_1+\alpha_1}{a_1}$ is a nontrivial idempotent. If $a_1=0$, then $\q_1+\alpha_1$
 is a nontrivial nilpotent. Either implies $\mathcal{A} \cong \mathbb{M}_2(K)$. Now, we assume the existence
of  two pairs $(x,y),(z,w) \in K \times K^{*}$ satisfying $\lambda=C(x,y,z,w)$. Consider the pair
of non-zero conjugates $\q=(x-z)+y\q_1+w\q_2$ and $\overline{\q}=(x-z)+y(\q_1+a_1)+w(\q_2+a_2)$ 
in $\mathcal{A}$. 
A simple computation shows that $ \q\overline{\q}=\overline{\q}\q= yw(C(x,y,z,w)+\lambda)=0$.
We conclude that $\mathcal{A} \cong \mathbb{M}_2(K)$ again. 
Reciprocally, assume the existence of an isomorphism $\phi: \mathcal{A} \rightarrow \mathbb{M}_2(K)$, 
and assume that neither $m_1(X)$ nor $ m_2(X)$  have a zero in $K$. Let $\q_1,\q_2 \in \mathcal{A}$ be 
generators of $\mathcal{A}$ as given in \eqref{eq algebra}, and let $q_1,q_2$ be the corresponding 
matrices. Since $ \mathbb{M}_2(K)$ contains a two dimensional space
of norm $0$ elements, for example, the matrices with a vanishing first row, we can always find there
an element of the form  $q=(x-z)+yq_1+wq_2$. Neither $y$ nor $w$ can vanish, as either element in
$\{q_1,q_2\}$ generates a field. The result follows.
\end{proof}

\begin{example}
We claim that, if  $m_2=x^2+\pi$ and the splitting field
of $m_1$ is an unramified quadratic extension of $K$, then $\mathcal{A}=\mathcal{A}(0,m_1,m_2)$
 is a division algebra. Note that $\lambda=a_2=0$ implies $\Delta= a_1^2 \pi \neq 0$. In fact, if 
$\mathcal{A}$ were a matrix algebra, then there would exist pairs $(x,y),(z,w) \in K \times K^{*}$ satisfying $C(x,y,z,w)=0$, or equivalently  
$$(yw)C(x,y,z,w)= x^2+a_1 xy+b_1 y^2+z^2+ \pi w^2+a_1 zy=0.$$
Rearranging, we obtain $\Big|y^2m_1\Big(\frac{x+z}y\Big)\Big|=|\pi w^2|$, which is impossible
by the properties of the Artin-Schreier defect, or equivalently, since
$m_1$ has no roots in the residue field. 
We conclude that $\mathcal{A}$ is a division algebra. This is a characteristic-2 analog of
\cite[\S 63 B, Theo. 63.11 B]{Om}.
\end{example}

\begin{proof}[Proof of Theorem 2.3]
Assume first $\Delta \neq 0$, and assume the existence of elements $q_1,q_2 \in \mathbb{M}_2(K)$ satisfying 
\eqref{133}. Let $\varphi: \mathcal{A}=\mathcal{A}(\lambda,m_1,m_2) \rightarrow \mathbb{M}_2(K)$ be the 
representation defined by $\q_i\mapsto q_i$. 
 As $\mathcal{A}$ is a simple algebra and $\dim_K \mathcal{A}= \dim_K \mathbb{M}_2(K)$, 
$\varphi$ must be an isomorphism. Reciprocally, if $\varphi: \mathcal{A} \rightarrow \mathbb{M}_2(K)$ is an 
isomorphism, then the images $q_1=\varphi(\q_1)$ and $q_2= \varphi(\q_2)$ satisfy \eqref{133}. 
The results follows in this case from Prop. \ref{t23}.

In all that follows we assume $\Delta=0$. Now suppose $a_1 \neq 0$ and assume the existence of 
$q_1,q_2 \in \mathbb{M}_2(K)$ satisfying \eqref{133}. Define $q_2''=a_2q_1+a_1q_2=q_2'+\lambda$, 
where $q_2'$ corresponds to the element $\q'_2$ in  the proof of Lemma \ref{lema 6.1},  
so that $q_2''^2=\lambda^2$ and
\begin{equation}\label{eq5}
q_1q_2''+q_2''(q_1+a_1)=a_1\lambda. 
\end{equation}
Changing the base if needed, we can assume $ q_2''= \textnormal{\scriptsize{$ \left( \begin{array}{cc}
\lambda & 0  \\1 & \lambda  \end{array} \right) $\normalsize}}$. Set $q_1= 
\textnormal{\scriptsize{$\left( \begin{array}{cc} u & y  \\  z & w\end{array} \right) $\normalsize}}$,
so that identity (\ref{eq5}) gives $y=0$ and $w=u+a_1$. Then, the condition $m_1(q_1)=0$ implies $m_1(u)=0$. Conversely, if $m_1$ has a root $\alpha \in K$, then the quaternions 
$q_1=\textnormal{\scriptsize{$ \left( \begin{array}{cc}
a_1+\alpha & 0  \\
0 & \alpha\end{array} \right)$\normalsize}}$
and  $q_2=  \textnormal{\scriptsize{$\frac{1}{a_1}\left(\begin{array}{cc}
\lambda+ a_1 a_2 + a_2 \alpha & 0  \\
1 & \lambda+ a_2 \alpha  \end{array} \right) $\normalsize}}$ satisfy \eqref{133}. 
The same holds if $a_2\neq 0$.

Finally, assume $a_1=a_2 =0$, so that $\lambda=0$,
 and set $L=K(\sqrt{b_1},\sqrt{b_2})$, as in the proof of  Lemma \ref{lema 6.1}.
Is not hard to see, as  $\sqrt{b_1}$ and $\sqrt{b_2}$ satisfy the relations defining $\mathcal{A}$,
that $L$ is isomorphic to a quotient of $\mathcal{A}$.
If $[L:K]=4$, so that $\mathcal{A} \cong L$, the algebra $\mathcal{A}$ 
cannot have a two dimensional representation. 
If $[L:K]\leq2$, such representations do exists. Note however that any pair of matrices $(q_1,q_2)$
satisfying \eqref{133} must be contained in a commutative subalgebra, and in $\matrici_2(K)$ such
algebras are always two dimensional.
The result follows if we prove that $[L:K]\leq2$ in our case. In fact, any Laurent series can be written in the
form $$f(\pi)=\sum_{m\in\mathbb{Z}}a_{m}\pi^{m}
=\sum_{n\in\mathbb{Z}}b_{2n}^2\pi^{2n}+\pi\sum_{n\in\mathbb{Z}}b_{2n+1}^2\pi^{2n},$$
where $b_m^2=a_m$, so $K$ has a unique inseparable quadratic extension. Note that this is the
only point in the proof where we use the fact that the residue field is perfect.
\end{proof}

\section{acknowledgments}

The first author was supported by Fondecyt, Project No 1200874. The second authors was supported by CONICYT,
Doctoral scholarship No 21180544.

$$ $$

Luis Arenas-Carmona

Departamento de Matem\'aticas, Facultad de Ciencias

Universidad de Chile, Casilla 653, Santiago, Chile

learenas@u.uchile.cl

$$ $$

Claudio Bravo

Departamento de Matem\'aticas, Facultad de Ciencias

Universidad de Chile, Casilla 653, Santiago, Chile

claudio.bravo.c@ug.uchile.cl

\end{document}